\numberwithin{equation}{section}
\numberwithin{figure}{section}
\theoremstyle{plain}
\newtheorem{thm}{\protect\theoremname}[section]
\theoremstyle{remark}
\newtheorem{rem}{\protect\remarkname}[section]
\theoremstyle{plain}
\newtheorem{cor}[thm]{\protect\corollaryname}
\theoremstyle{definition}
\newtheorem{defn}[thm]{\protect\definitionname}
\theoremstyle{plain}
\newtheorem{lem}[thm]{\protect\lemmaname}
\theoremstyle{plain}
\newtheorem{prop}[thm]{\protect\propositionname}
\theoremstyle{remark}
\newtheorem{claim}[thm]{\protect\claimname}
\providecommand{\claimname}{Claim}
\providecommand{\corollaryname}{Corollary}
\providecommand{\definitionname}{Definition}
\providecommand{\lemmaname}{Lemma}
\providecommand{\propositionname}{Proposition}
\providecommand{\remarkname}{Remark}
\providecommand{\theoremname}{Theorem}
\begin{document}
\global\long\def\cI{\mathcal{I}}

\global\long\def\cJ{\mathcal{J}}

\global\long\def\cN{\mathcal{N}}

\global\long\def\cX{\mathbb{\mathcal{X}}}

\global\long\def\cZ{\mathbb{\mathcal{Z}}}

\global\long\def\bbc{\mathbb{C}}

\global\long\def\bbd{\mathbb{D}}

\global\long\def\bbe{\mathbb{E}}

\global\long\def\bbn{\mathbb{N}}

\global\long\def\bbp{\mathbb{P}}

\global\long\def\bbr{\mathbb{R}}

\global\long\def\bbt{\mathbb{T}}

\global\long\def\bbz{\mathbb{Z}}

\global\long\def\varint#1#2#3{\cI\left(#1,#2,#3\right)}

\global\long\def\pr#1{\mathbb{\bbp}\left[#1\right]}

\global\long\def\ex#1{\mathbb{\bbe}\left[#1\right]}

\global\long\def\var#1{\mathrm{Var}\left(#1\right)}

\global\long\def\im#1{\mathrm{Im}\left\{  #1\right\}  }

\global\long\def\zs#1{\cZ_{#1}}

\global\long\def\eqdef{\coloneqq}

\global\long\def\defeq{\eqqcolon}

\global\long\def\dd{\mathrm{d}}

\global\long\def\veps{\varepsilon}

\global\long\def\indf#1#2{\mathbf{1}_{#2}\left(#1\right)}

\global\long\def\diskrc#1{#1\overline{\bbd}}

\global\long\def\Re#1{\mathrm{Re}\left[#1\right]}

\global\long\def\Im#1{\mathrm{Im}\left[#1\right]}

\global\long\def\longint#1{\mathring{T}_{#1}}

\title{Fluctuations for zeros of Gaussian Taylor series }

\author{Avner Kiro\textsuperscript{1} and Alon Nishry\textsuperscript{2}}
\footnotetext[1]{Weizmann Institute of Science, Rehovot, Israel. Email: avner-ephraiem.kiro@weizmann.ac.il. Supported in part by ERC Advanced Grant 692616, ISF Grant 382/15, and by the European Research Council (ERC) under the European Union's Horizon 2020 research and innovation programme (grant agreement No 802107).}
\footnotetext[2]{School of Mathematical Sciences, Tel Aviv
	University, Tel Aviv, Israel. Email: alonish@tauex.tau.ac.il. Supported in part by  ISF Grant 1903/18, and  by a grant from the United States - Israel Binational Science Foundation (BSF Start up Grant no. 2018341),
	Jerusalem, Israel.}
\begin{abstract}
We study fluctuations in the number of zeros of random analytic functions
given by a Taylor series whose coefficients are independent complex
Gaussians. When the functions are entire, we find sharp bounds for
the asymptotic growth rate of the variance of the number of zeros
in large disks centered at the origin. To obtain a result that holds
under no assumptions on the variance of the Taylor coefficients we
employ the Wiman-Valiron theory. We demonstrate the sharpness of our
bounds by studying well-behaved covariance kernels, which we call
admissible (after Hayman). 
\end{abstract}

\maketitle

\section{Introduction}

Some of the earliest works concerning random analytic functions are
the ones of Littlewood and Offord \cite{Littlewood1945distribution1,Littlewood1948distribution2},
who showed that the structure of the zero set of these functions is
very regular. More recently, these classical results were sharpened
in the papers \cite{Kabluchko2014distribution,nns2016distribution}.
In this paper we consider the typical size of fluctuations in the
number of zeros of random analytic functions whose coefficients are
independent complex Gaussians. This is the most well-studied and best
understood model (see the book \cite{ZerosBook} and ICM notes \cite{nazarov2010random}).

Given a sequence $\left\{ a_{n}\right\} _{n\geq0}$ of non-negative
numbers, we consider random Taylor series
\begin{equation}
f\left(z\right)=\sum_{n\geq0}\xi_{n}a_{n}z^{n},\label{eq:GAF_def}
\end{equation}
where $\xi_{n}$ are independent and identically distributed standard
complex Gaussians. We only consider \emph{transcendental} analytic
functions, that is, sequences $\left\{ a_{n}\right\} $ which contain
infinitely many non-zero terms. Denote by $\zs f=f^{-1}\left\{ 0\right\} $
the zero set of $f$; its properties are determined by the covariance
kernel
\[
K\left(z,w\right)=\ex{f\left(z\right)\overline{f\left(w\right)}}\defeq G(z\bar{w}),\quad\text{where}\quad G(z):=\sum_{n\geq0}a_{n}^{2}z^{n}.
\]
We will call $G$ the \emph{covariance function} of $f$; denote by
$R_{G}$ the radius of convergence of $G$ around the origin. We consider
both $R_{G}<\infty$ and $R_{G}=\infty$, and in the former case,
without loss of generality, we may assume $R_{G}=1$. Then, it is
not difficult to check that the radius of convergence of $f$ is almost
surely $R_{G}$ in both cases (see \cite[Lemma 2.2.3]{ZerosBook}).
When $R_{G}=\infty$ we call $f$ a \emph{Gaussian} \emph{entire}
\emph{function}.

Let $n_{f}\left(r\right)$ be the number of zeros of the function
$f$ inside the disk $\left\{ \left|z\right|\le r\right\} $, where
$r<R_{G}$. We are interested in the asymptotic statistical properties
of the random variable $n_{f}\left(r\right)$, as $r\to R_{G}$. In
order to study this asymptotics, it will be convenient to define the
following functions
\[
a\left(r\right)=a_{G}\left(r\right)\eqdef r\left(\log G\left(r\right)\right)^{\prime}=\frac{rG^{\prime}\left(r\right)}{G\left(r\right)},\qquad b\left(r\right)=b_{G}\left(r\right)\eqdef ra^{\prime}\left(r\right),
\]
borrowing the notation used in \cite{Hayman}. Since the Taylor coefficients
of $G$ are non-negative, the function $r\mapsto\log G\left(e^{r}\right)$
is \emph{convex}, hence $a\left(r\right)$ is increasing, and $b\left(r\right)$
is non-negative for all $r<R_{G}$.

The Edelman-Kostlan formula \cite[p. 25]{ZerosBook} (see also Appendix
\ref{sec:Kahane_formulas}) states that for \emph{any} Gaussian analytic
function $f$ of the form (\ref{eq:GAF_def}) we have
\[
\ex{n_{f}\left(r\right)}=a\left(r^{2}\right),\qquad\text{for all }r<R_{G}.
\]
However, the expected value provides little information about the
distribution of the random variable. Here we will be interested in
the asymptotic growth rate of the variance $\var{n_{f}\left(r\right)}$
in terms of the functions $a$ and $b$, in general under no additional
assumptions on the nature of the coefficients $a_{n}$.

In order to present the results we will need the following notation.
We say that a set $E\subset\bbr^{+}$ is of \emph{finite logarithmic
measure} if
\[
\int_{E}\,\frac{\dd t}{t}<\infty.
\]
If $g_{1},g_{2}:\bbr^{+}\to\bbr^{+}$ are non-negative functions,
we write $g_{1}\lesssim_{L}g_{2}$ if there is a constant $C>0$,
and a set $E\subset\bbr^{+}$ of finite logarithmic measure, such
that $g_{1}\le Cg_{2}$ in $\bbr^{+}\backslash E$. Finally, we write
$g_{1}\asymp_{L}g_{2}$ if $g_{1}\lesssim_{L}g_{2}$ and $g_{1}\gtrsim_{L}g_{2}$
both hold.

\begin{thm}
\label{thm:lower_bound} For \emph{any} Gaussian entire function $f$
with a transcendental covariance function $G$, and any $\varepsilon>0$
\[
\text{Var}\left(n_{f}\left(r\right)\right)\gtrsim_{L}\frac{b^{2}\left(r^{2}\right)}{a\left(r^{2}\right)^{\frac{3}{2}+\varepsilon}}.
\]
In addition, if \textbf{$b$} is a \emph{non-decreasing} function,
then
\[
\text{Var}\left(n_{f}\left(r\right)\right)\gtrsim_{L}\sqrt{b\left(r^{2}\right)}.
\]
\end{thm}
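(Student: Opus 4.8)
The plan is to use the Fourier-analytic expression for the variance of the zero counting function that goes back to work on zeros of Gaussian analytic functions (the ``spectral'' approach). Writing $n_f(r)$ via the argument principle and Edelman--Kostlan, one has a representation of the form
\[
\var{n_f(r)} = \sum_{k\geq 1} c_k(r),
\]
where each $c_k(r)\geq 0$ and $c_k(r)$ is governed by the pair correlation between the coefficients $\xi_n$ and $\xi_{n+k}$; concretely $c_k(r)$ can be written in terms of $G$ evaluated along circles, roughly as an integral of $|G(r^2 e^{i\theta})/G(r^2)|$-type quantities against $\cos(k\theta)$. Dropping all terms except $k=1$ gives the lower bound
\[
\var{n_f(r)} \geq c_1(r),
\]
so everything reduces to estimating $c_1(r)$ from below in terms of $b(r^2)$.

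Next I would analyze $c_1(r)$ by the saddle-point / Laplace heuristic near $\theta = 0$. The function $\theta \mapsto \log|G(r^2 e^{i\theta})| = \Re[\log G(r^2 e^{i\theta})]$ has, by Taylor expansion, the form $\log G(r^2) - \tfrac12 b(r^2)\theta^2 + O(\cdot)$, so the relevant integrand concentrates on the scale $|\theta| \lesssim b(r^2)^{-1/2}$. On this window $\cos\theta \approx 1$, and a Gaussian integral produces a factor of order $b(r^2)^{-1/2}$ times something of order $b(r^2)$ coming from the second-derivative normalization, the net effect being $c_1(r) \asymp \sqrt{b(r^2)}$. To make this rigorous without any regularity hypothesis beyond monotonicity of $b$, I would not attempt a pointwise asymptotic but instead a one-sided bound valid off an exceptional set: use that $\log G(e^t)$ is convex (stated in the excerpt) together with the monotonicity of $b$ to control $\log|G(r^2e^{i\theta})|$ from below on the critical window by comparison with the parabola, invoking a Borel--Carathéodory or Hayman-type estimate to pass from the real part on a circle to a lower bound on $|G|$ on a slightly smaller circle; the standard trick for the ``$\lesssim_L$'' conclusion is that any step where one needs $b(2r)\lesssim b(r)$ (or $a(r^2)$ not jumping) fails only on a set of finite logarithmic measure, by a Borel-type lemma applied to the increasing function $a$.

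The main obstacle, and the reason the exceptional set $E$ appears, is precisely the absence of regularity: $b(r^2)$ can oscillate wildly, and on a general circle $|z|=r^2$ the function $G$ may have its modulus dominated by a single term $a_N^2 r^{2N}$ with $N$ very different from $a(r^2)$, which would wreck the Gaussian-window heuristic. The fix is Wiman--Valiron theory (flagged in the abstract): for $r$ outside a set of finite logarithmic measure, the maximal term and the central index behave regularly, $\mu_G(r) \asymp G(r)$ up to subpolynomial factors, and the central index is comparable to $a(r)$; this is exactly what is needed to justify the saddle-point lower bound for $c_1(r)$ on $\bbr^+\setminus E$. I would therefore structure the proof as: (i) reduce to $c_1(r)$; (ii) state the Wiman--Valiron regularity on a good set; (iii) on the good set, lower-bound the integral defining $c_1(r)$ by restricting to $|\theta|\lesssim b(r^2)^{-1/2}$ and using convexity plus monotonicity of $b$ to keep $\log|G(r^2e^{i\theta})| \geq \log G(r^2) - C b(r^2)\theta^2$ there; (iv) evaluate the resulting Gaussian integral to get $\sqrt{b(r^2)}$, absorbing all the ``comparable up to constants'' steps into the finite-logarithmic-measure exceptional set. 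The non-monotone case (the first displayed bound, with the $a(r^2)^{3/2+\veps}$ denominator) would come from the same computation but with the cruder bound $\log|G(r^2e^{i\theta})|\geq \log G(r^2) - Cb(r^2)\theta^2$ replaced by an estimate valid on a window of size $\sim a(r^2)^{-1/2-\veps}$ that one can always guarantee, losing the extra powers of $a$.
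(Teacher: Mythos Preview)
Your high-level strategy---restrict the variance integral to a window $|\theta|\lesssim \delta(t)$ around $\theta=0$, control the integrand there via a quadratic Taylor expansion of $\log H(t+i\theta)$, and invoke Wiman--Valiron to handle the exceptional set---is exactly the route the paper takes. But several of the concrete steps you propose either don't work or miss the actual difficulty.

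\textbf{The spectral decomposition is a red herring.} There is no need to write $\var{n_f(r)}=\sum_k c_k(r)$ and drop to $c_1$. The variance formula the paper uses,
\[
\var{n_f(r)}=\frac{1}{2\pi}\int_{-\pi}^{\pi}\frac{|A(t+i\theta)-A(t)|^{2}}{\exp\bigl(2\int_{0}^{\theta}\Im A(t+i\varphi)\,\dd\varphi\bigr)-1}\,\dd\theta,
\]
already has a nonnegative integrand (the denominator equals $H(t)^2/|H(t+i\theta)|^2-1\ge 0$), so one simply restricts to $|\theta|\le\delta(t)$. Your description of $c_k$ as an integral ``against $\cos(k\theta)$'' does not match any formula in the paper and would need independent justification.

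\textbf{The real gap: controlling the Taylor error.} You write the expansion $\log|G(r^2e^{i\theta})|=\log G(r^2)-\tfrac12 b(r^2)\theta^2+O(\cdot)$ and propose to bound the error using convexity of $t\mapsto\log G(e^t)$, monotonicity of $b$, and a Borel--Carath\'eodory step. This is where the argument breaks. Convexity and monotonicity of $B=A'$ are statements on the \emph{real} axis; they say nothing directly about the third-order remainder for complex $\theta$, and that remainder is the entire difficulty. The paper's mechanism is different and more involved: one shows (Rosenbloom's large-deviation estimate, Lemma~3.3) that for $t$ in a good set, $H(\tau)$ is approximated on $|\tau-t|\lesssim B(t)^{-1/2}$ by an exponential polynomial of width $\lesssim\sqrt{B(t)}$ with small error; such a polynomial satisfies $|P(\tau)|\asymp P(t)$ on that window (Lemma~3.5), hence $|\log(H(t+\tau)/H(t))|$ is bounded there. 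Only \emph{then} does a Schwarz/Borel--Carath\'eodory step convert this real-part bound into the cubic Taylor remainder estimate $|h_t(w)|\lesssim |w|^3 B(t)^{3/2}$, yielding local $\delta$-admissibility with $\delta=B^{-1/2}$. Your sketch skips the polynomial-approximation step, without which the Borel--Carath\'eodory argument has no input.

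\textbf{You also need the numerator, not just the denominator.} Your discussion focuses on lower-bounding $|H(t+i\theta)|$, which controls the denominator. But the lower bound on the variance equally requires $|A(t+i\theta)-A(t)|\gtrsim B(t)|\theta|$ on the window, and this does not follow from a one-sided bound on $\log|H|$. In the paper both estimates come simultaneously from differentiating the local-admissibility expansion (Lemma~2.1).

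\textbf{Window size in the general case.} For arbitrary $G$ the Wiman--Valiron input is that $H$ is approximated by an exponential polynomial of width $K(r)\asymp a(r)^{(1+\gamma)/2}$ on a window of size $\sim K^{-1}$. Running the same Cauchy/Schwarz argument yields local $\delta$-admissibility with $\delta(t)=B(t)/K^3(e^t)$, not $\delta\sim a^{-1/2-\varepsilon}$ as you state. Plugging into $\var\gtrsim\min\{\delta B,\sqrt{B}\}$ then gives $\var\gtrsim B^2/K^3\gtrsim b^2/a^{3/2+\varepsilon}$. Your proposed window would, if valid, give $\var\gtrsim B/a^{1/2+\varepsilon}$; combined with $b\lesssim_L a^{1+\varepsilon}$ this does imply the stated bound, but you have not shown that local admissibility holds on a window as large as $a^{-1/2-\varepsilon}$, and the paper's argument does not yield that.
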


\begin{rem}
With some more work the factor $a\left(r^{2}\right)^{\veps}$ in Theorem
\ref{thm:lower_bound} can be replaced by a power of $\log a\left(r^{2}\right)$,
we will not pursue this here.
\end{rem}

\begin{rem}
\label{rmk:rosenbloom_a_b} By \cite[Lemma 1]{rosenbloom1962probability}
it follows that for every $\varepsilon>0$ we have that $b\left(r\right)\lesssim_{L}a\left(r\right)^{1+\varepsilon}$
.
\end{rem}

It turns out that the upper bound for the variance may be considerably
larger asymptotically. The following result holds without \emph{any}
restrictions on $G$.
\begin{thm}
\label{thm:upper_bound} Let $f$ be a Gaussian analytic function
with covariance function $G$, then for every $r<R_{G}$
\[
\text{Var}\left(n_{f}(r)\right)\le b\left(r^{2}\right).
\]
\end{thm}

\begin{rem}
\label{rmk:sharpness} If $G\left(z\right)$ is of the form $a_{n}^{2}z^{n}+a_{m}^{2}z^{m}$,
then one can check that $\var{n_{f}\left(r\right)}=b_{G}\left(r^{2}\right)$.
This implies that for any non-decreasing and unbounded function $\beta:\bbr^{+}\to\bbr^{+}$,
there is a covariance function $G_{\beta}$, so that if $f$ is the
Gaussian entire function whose covariance function is $G_{\beta}$,
there is a sequence $r_{n}\to\infty$ so that
\[
\var{n_{f}\left(r_{n}\right)}=\left(1+o\left(1\right)\right)\beta\left(r_{n}\right)=\left(1+o\left(1\right)\right)b_{G_{\beta}}\left(r_{n}^{2}\right),\qquad n\to\infty,
\]
and in particular
\[
\limsup_{r\to\infty}\frac{\var{n_{f}\left(r\right)}}{b_{G_{\beta}}\left(r^{2}\right)}=1.
\]
\end{rem}

By Theorem \ref{thm:upper_bound} and Remark \ref{rmk:rosenbloom_a_b}
we get the following conclusion.
\begin{cor}
\label{cor:var_bound_by_exp} For any Gaussian entire function $f$
with a transcendental covariance function $G$, and every $\varepsilon>0$,
\[
\var{n_{f}\left(r\right)}\lesssim_{L}\ex{n_{f}\left(r\right)}^{1+\veps}.
\]
\end{cor}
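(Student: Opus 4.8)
The plan is to chain together the three facts already in hand. By the Edelman--Kostlan formula we have $\ex{n_f(r)} = a(r^2)$ for every $r < R_G$, and since $f$ is entire with transcendental covariance function $G$ we have $R_G = \infty$ and $a(r^2) \to \infty$ as $r \to \infty$ (otherwise $\log G(e^t) = O(t)$, which would force $G$ to be a polynomial); thus $\ex{n_f(r)}^{1+\veps}$ genuinely grows and the asymptotic statement is meaningful. By Theorem~\ref{thm:upper_bound} we have the pointwise inequality $\var{n_f(r)} \le b(r^2)$. Hence it suffices to prove $b(r^2) \lesssim_{L} a(r^2)^{1+\veps}$.

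That estimate is precisely Remark~\ref{rmk:rosenbloom_a_b} (i.e.\ \cite[Lemma~1]{rosenbloom1962probability}) applied with argument $s = r^2$: for each $\veps > 0$ there are a constant $C = C(\veps) > 0$ and a set $E \subset \bbr^{+}$ of finite logarithmic measure with $b(s) \le C\,a(s)^{1+\veps}$ for all $s \in \bbr^{+} \setminus E$. Substituting $s = r^2$ yields $b(r^2) \le C\,a(r^2)^{1+\veps}$ for every $r$ outside the set $\sqrt{E} \eqdef \{\, r > 0 : r^2 \in E \,\}$.

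The only step that needs a word of justification --- and the closest thing to an obstacle here, though it is entirely routine --- is that $\sqrt{E}$ is again of finite logarithmic measure; this follows from the change of variables $s = r^2$, under which $\int_{\sqrt{E}} \frac{\dd r}{r} = \frac{1}{2}\int_{E} \frac{\dd s}{s} < \infty$. Combining the three bounds, $\var{n_f(r)} \le b(r^2) \le C\,a(r^2)^{1+\veps} = C\,\ex{n_f(r)}^{1+\veps}$ for all $r$ outside a set of finite logarithmic measure, which is exactly the assertion $\var{n_f(r)} \lesssim_{L} \ex{n_f(r)}^{1+\veps}$.
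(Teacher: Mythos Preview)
Your proof is correct and follows exactly the approach the paper indicates: combine Theorem~\ref{thm:upper_bound} ($\var{n_f(r)}\le b(r^2)$), the Edelman--Kostlan formula ($\ex{n_f(r)}=a(r^2)$), and Remark~\ref{rmk:rosenbloom_a_b} ($b\lesssim_L a^{1+\veps}$). You have merely expanded what the paper states in a single sentence, and your explicit verification that $\sqrt{E}$ has finite logarithmic measure via $\int_{\sqrt{E}}\frac{\dd r}{r}=\tfrac{1}{2}\int_E\frac{\dd s}{s}$ is the same observation the paper records later in Section~\ref{subsec:local_admis_no_assump}.
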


\subsection{Well-behaved covariance functions}

If the covariance function $G$ of $f$ is sufficiently well-behaved,
such as $e^{z}$, $e^{e^{z}}$, and the Mitag-Leffler functions
\[
G\left(z\right)=\sum_{n=0}^{\infty}\frac{z^{n}}{\Gamma\left(\alpha n+1\right)},
\]
then it is possible to find the asymptotics of the variance. A notable
example is the Gaussian Entire Function (GEF), with $G\left(z\right)=e^{z}$,
whose zero set is \emph{invariant} with respect to the isometries
of the complex plane (see the book \cite[Chapter 2.3]{ZerosBook}).
Forrester and Honner \cite{forrester1999exact} found the precise
asymptotic growth of the variance for the GEF (see also \cite{nazarov2011fluctuations}).
In order to extend this result, we define two classes of \emph{admissible}
covariance functions, which in particular include all the previous
examples. For the precise definitions see Sections \ref{subsec: type I admissible}
and \ref{subsec:type_II_admissible}. More examples, including Gaussian
analytic functions with an admissible covariance function in the unit
disk are described in Section \ref{sec:Examples}.
\begin{thm}
\label{thm:var_asymp} Let $f$ be a Gaussian analytic function with
a type I admissible covariance function $G$. Then 
\[
\var{n_{f}\left(r\right)}=\left(1+o\left(1\right)\right)\frac{\zeta\left(\frac{3}{2}\right)}{4\sqrt{\pi}}\sqrt{b\left(r^{2}\right)},\quad r\to R_{G}^{-},
\]
where $\zeta\left(u\right)$ is the Riemann zeta function.
\end{thm}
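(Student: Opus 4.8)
The plan is to reduce $\var{n_f(r)}$ to an explicit one‑dimensional integral and then run a Hayman saddle‑point analysis on it.

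\emph{Step 1 (a closed formula for the variance).} By Jensen's formula the circular average $u(r):=\frac{1}{2\pi}\int_0^{2\pi}\log|f(re^{i\theta})|\,\dd\theta$ satisfies $n_f(r)=r\,u'(r)$ for a.e.\ $r<R_G$, and for each fixed $r$ almost surely. On the other hand, since $f$ is Gaussian with kernel $K(z,w)=G(z\bar w)$, the classical identity
\[
\mathrm{Cov}\!\left(\log|f(z)|,\log|f(w)|\right)=\tfrac14\,\mathrm{Li}_{2}\bigl(\rho(z,w)\bigr),\qquad \rho(z,w):=\frac{|K(z,w)|^{2}}{K(z,z)\,K(w,w)}\in[0,1]
\]
(with $\mathrm{Li}_{2}(x)=\sum_{k\ge1}x^{k}/k^{2}$; see \cite{ZerosBook}), together with rotation invariance, gives $\mathrm{Cov}(u(r_1),u(r_2))=\Psi(r_1,r_2)$, where
\[
\Psi(r_1,r_2):=\frac{1}{8\pi}\int_0^{2\pi}\mathrm{Li}_{2}\!\left(\frac{|G(r_1r_2e^{i\phi})|^{2}}{G(r_1^{2})\,G(r_2^{2})}\right)\dd\phi .
\]
Differentiating $n_f(r)-\ex{n_f(r)}=r\frac{\dd}{\dd r}\bigl(u(r)-\ex{u(r)}\bigr)$ in $r$ and passing to logarithmic variables $r_i=re^{p_i}$ yields
\[
\var{n_f(r)}=\partial_{p_1}\partial_{p_2}\,\Psi(re^{p_1},re^{p_2})\Big|_{p_1=p_2=0}.
\]
The exchange of derivatives with the variance I would justify through the difference quotients $\tfrac1h\int_r^{r+h}n_f(t)\,\dd t/t$, which converge to $n_f(r)/r$ almost surely and — using the monotonicity of $t\mapsto n_f(t)$ and $\ex{n_f(r')^{2}}<\infty$ for $r'<R_G$ (Theorem~\ref{thm:upper_bound}) — in $L^{2}$; their variances are explicit second differences of $\Psi$ and converge to $\var{n_f(r)}$.

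\emph{Step 2 (localisation via admissibility).} Put $p_1=\tfrac12(\sigma+\delta)$, $p_2=\tfrac12(\sigma-\delta)$, so that $\partial_{p_1}\partial_{p_2}=\partial_\sigma^{2}-\partial_\delta^{2}$. The argument of $\mathrm{Li}_{2}$ inside $\Psi(re^{p_1},re^{p_2})$ equals $\exp\!\bigl(2\,\mathrm{Re}\log G(r^{2}e^{\sigma+i\phi})-\log G(r^{2}e^{\sigma+\delta})-\log G(r^{2}e^{\sigma-\delta})\bigr)$ and lies in $[0,1]$. By the defining properties of a type~I admissible $G$ there is a Hayman expansion $\log G(r^{2}e^{w})=\log G(r^{2})+a(r^{2})w+\tfrac12 b(r^{2})w^{2}\bigl(1+o(1)\bigr)$, uniform on a major arc $|w|\le\eta(r)$ with $\eta(r)\sqrt{b(r^{2})}\to\infty$, together with $b(r^{2})\to\infty$, slow variation of $b$ in $\log r$, and a minor‑arc bound making the integrand of $\Psi$ negligibly small for $\eta(r)\le|\phi|\le\pi$. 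Substituting the expansion, the $a(r^{2})$‑terms cancel and the argument of $\mathrm{Li}_{2}$ becomes $\exp\!\bigl(-b(r^{2})(\delta^{2}+\phi^{2})(1+o(1))\bigr)$ on the major arc, while the minor arc — even after the two $(\sigma,\delta)$‑differentiations, each of which can inflate the integrand only by a factor polynomial in $a(r^{2})$ — contributes $o\bigl(\sqrt{b(r^{2})}\bigr)$.

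\emph{Step 3 (the saddle‑point evaluation).} Write $b=b(r^{2})$. To obtain $\partial_\delta^{2}\Psi|_{\sigma=\delta=0}$ it suffices to differentiate $\Psi$ along $\sigma=0$, i.e.\ $\Psi(re^{\delta/2},re^{-\delta/2})$; rescaling $\phi=\psi/\sqrt{b}$ on the major arc (where the integrand concentrates in $|\psi|=O(1)$, so the $\psi$‑range extends to $\bbr$) gives $\Psi(re^{\delta/2},re^{-\delta/2})=\frac{1}{8\pi\sqrt{b}}\int_{\bbr}\mathrm{Li}_{2}\bigl(e^{-b\delta^{2}-\psi^{2}}\bigr)\dd\psi\,(1+o(1))$. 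Using $\frac{\dd}{\dd t}\mathrm{Li}_{2}(e^{-t})=\log(1-e^{-t})$, two $\delta$‑derivatives at $\delta=0$ bring out $\frac{2b}{8\pi\sqrt{b}}\int_{\bbr}\log(1-e^{-\psi^{2}})\dd\psi$, and since
\[
\int_{\bbr}\log\bigl(1-e^{-\psi^{2}}\bigr)\dd\psi=-\sum_{k\ge1}\frac1k\int_{\bbr}e^{-k\psi^{2}}\dd\psi=-\sqrt{\pi}\sum_{k\ge1}k^{-3/2}=-\sqrt{\pi}\,\zeta(\tfrac32),
\]
we get $-\partial_\delta^{2}\Psi|_{\sigma=\delta=0}=(1+o(1))\tfrac{\zeta(3/2)}{4\sqrt{\pi}}\sqrt{b}$. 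Likewise $\partial_\sigma^{2}\Psi|_{\sigma=\delta=0}$ is the second $\sigma$‑derivative of $\Psi(re^{\sigma/2},re^{\sigma/2})=\tfrac{\zeta(5/2)}{8\sqrt{\pi\,b(r^{2}e^{\sigma})}}(1+o(1))$ at $\sigma=0$, which by the slow variation of $b$ is $O(b^{-1/2})=o(\sqrt{b})$. Adding, $\var{n_f(r)}=(1+o(1))\frac{\zeta(3/2)}{4\sqrt{\pi}}\sqrt{b(r^{2})}$, as claimed.

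\emph{Where the difficulty lies.} The evaluation of the constant is short; the real work is uniformity. Two points are delicate. First, passing from the smooth statistic $u(r)$ back to the integer‑valued $n_f(r)$ requires controlling second differences of $\Psi$ near the diagonal $r_1=r_2$, where $\rho\to1$ and $\mathrm{Li}_{2}$ fails to be $C^{1}$; here the singularity is only of type $(1-\rho)\log(1-\rho)$ and is smoothed by the $\phi$‑integration, but this has to be made quantitative. Second — and this is the technical heart — one must verify that \emph{all} of the remaining errors (the $o(1)$ in the Hayman expansion, the slow‑variation replacements, the extension of the $\psi$‑integral to $\bbr$, the $\partial_\sigma^{2}$ term, and above all the minor‑arc contribution, which two differentiations could in principle inflate by a factor of size $\ex{n_f(r)}$) stay $o\bigl(\sqrt{b(r^{2})}\bigr)$. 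This bookkeeping is precisely what the detailed definition of a type~I admissible covariance function is designed to control.
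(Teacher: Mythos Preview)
Your overall plan and the computation of the constant via $\int_\bbr\log(1-e^{-\psi^2})\,\dd\psi=-\sqrt\pi\,\zeta(3/2)$ are correct, but the $(\sigma,\delta)$ route has a genuine gap. The paper proceeds differently: it first derives (via Stokes, in the appendix) the closed formula
\[
\var{n_f(r)}=\frac{1}{2\pi}\int_{-\pi}^{\pi}\frac{|A(t+i\theta)-A(t)|^{2}}{\exp\!\bigl(2\int_0^\theta\Im{A(t+i\varphi)}\,\dd\varphi\bigr)-1}\,\dd\theta,
\]
and then applies the four type~I assumptions directly to this integrand. Assumption~3 gives $|A(t+i\theta)-A(t)|\sim B(t)|\theta|$ and $\int_0^\theta\Im A\sim\tfrac12\theta^2B(t)$ on $|\theta|\le\delta(t)$, so the substitution $u=\theta\sqrt{B(t)}$ yields the main term $\tfrac{\sqrt B}{2\pi}\int_\bbr u^2/(e^{u^2}-1)\,\dd u=\tfrac{\zeta(3/2)}{4\sqrt\pi}\sqrt B$; Assumptions~2 and~4 give $J_2=O(A^2/B^4)=O(1)$ on the minor arc. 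No asymptotic expression is ever differentiated.

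You instead insert the Hayman expansion into $\Psi$ \emph{and then} take two real derivatives. Assumption~3 is stated only along $t+i\theta$, so it does not control the $\delta$- or $\sigma$-derivatives of the $o(1)$ error. More decisively, your bound $\partial_\sigma^2\Psi=O(b^{-1/2})$ rests on ``slow variation of $b$'', which is \emph{not} among the type~I hypotheses: none of Assumptions~1--4 bounds $B'$ or $B''$, and they do not imply such bounds, so this term is simply uncontrolled in your scheme. The clean repair is to avoid the $(\sigma,\delta)$ split and compute $\partial_{p_1}\partial_{p_2}\Psi$ directly under the $\phi$-integral: one obtains $\tfrac{\rho}{1-\rho}(\partial_{p_1}\log\rho)(\partial_{p_2}\log\rho)-\log(1-\rho)\,\partial_{p_1}\partial_{p_2}\log\rho$, and a single integration by parts in $\phi$ collapses this to exactly the paper's integrand above. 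All differentiation is then carried out \emph{before} any asymptotic replacement, no radial regularity of $B$ is needed, and the major/minor-arc analysis goes through using only Assumptions~1--4.
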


\begin{rem}
This indicates that the lower bound in Theorem \ref{thm:lower_bound}
is sharp.
\end{rem}

Suppose $G$ is a sufficiently regular covariance function (see Section
(\ref{subsec:type_II_admissible}) for the precise requirements).
In the next theorem we construct a Gaussian entire function $\widetilde{f}$
with covariance function $\widetilde{G}$, so that the variance of
the number of zeros of $\widetilde{f}$ is large outside a small exceptional
set of values of $r$. The statement of the theorem requires the following
definitions.
\begin{defn}
We will say that two covariance kernels $G$ and $\widetilde{G}$
are \emph{similar} if
\[
a_{\widetilde{G}}\left(r\right)\asymp_{L}a_{G}\left(r\right)\text{ and }b_{\widetilde{G}}\left(r\right)\asymp_{L}b_{G}\left(r\right).
\]
\end{defn}

\begin{defn}
\label{def:restriction} Let $G\left(z\right)=\sum_{n=0}^{\infty}c_{n}z^{n}$
be an analytic function. A function $\widetilde{G}$ is a \emph{Taylor
series restriction} of $G$, if $\widetilde{G}\left(z\right)=\sum_{n=0}^{\infty}\delta_{n}c_{n}z^{n}$
with $\delta_{n}\in\left\{ 0,1\right\} $ for all $n\in\bbn$.
\end{defn}

\begin{thm}
\label{thm:sharpness_upper_bound_example} Let $G$ be a type II admissible
function. There exists $\widetilde{G}$ which is a Taylor series restriction
of and similar to $G$, so that
\[
\var{n_{\widetilde{f}}\left(r\right)}\asymp_{L}b_{\widetilde{G}}\left(r^{2}\right).
\]
\end{thm}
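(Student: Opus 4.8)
\emph{Overall plan.} Write $G(z)=\sum_{n\ge0}c_nz^n$. The idea is to produce $\widetilde G$ by keeping only a very sparse subsequence of the Taylor coefficients of $G$, spaced so that at every scale only boundedly many of the retained terms are comparable in size; such a ``locally lacunary'' restriction should make the zero count near each scale resemble that of a two‑term Gaussian polynomial, whose variance is \emph{exactly} $b(r^{2})$ (cf.\ Remark~\ref{rmk:sharpness}). Using the Hayman saddle‑point picture (\cite{Hayman}) — for fixed $x$ the weight $c_nx^n$, viewed as a function of $n$, is log‑concave, peaks at $n\approx a_G(x)$, and has effective width $\asymp\sqrt{b_G(x)}$ — I would fix a suitable absolute constant $\kappa>0$, define indices $0=n_0<n_1<\cdots$ recursively by $n_{k+1}=n_k+\bigl\lceil\kappa\sqrt{b_G(a_G^{-1}(n_k))}\bigr\rceil$, and set $\widetilde G(z)=\sum_{k\ge0}c_{n_k}z^{n_k}$, the Taylor series restriction with $\delta_n=1$ iff $n\in\{n_k:k\ge0\}$. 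Since $c_{n_k}^{1/n_k}\to R_G^{-1}$ along this subsequence, $\widetilde G$ still has radius of convergence $R_G$ and is transcendental, so it is a legitimate covariance function; let $\widetilde f$ be the associated Gaussian analytic function. The bound $\var{n_{\widetilde f}(r)}\le b_{\widetilde G}(r^{2})$ is Theorem~\ref{thm:upper_bound}, so the two things to establish are that $G$ and $\widetilde G$ are similar, and the matching lower bound $\var{n_{\widetilde f}(r)}\gtrsim_L b_{\widetilde G}(r^{2})$.

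\emph{Similarity.} Exactly as for $G$ itself, $a_{\widetilde G}(x)$ and $b_{\widetilde G}(x)$ are the mean and the variance of the probability distribution on $\{n_k\}$ that assigns mass proportional to $c_{n_k}x^{n_k}$. The saddle‑point estimates for $G$ — for which the type~II admissibility hypotheses (Section~\ref{subsec:type_II_admissible}) are designed, supplying in particular $b_G\to\infty$ and $b_G=o(a_G^{2})$ — show that the retained indices carrying non‑negligible weight all lie within $O(\sqrt{b_G(x)})$ of $a_G(x)$, so $a_{\widetilde G}(x)=a_G(x)+O\bigl(\sqrt{b_G(x)}\bigr)=(1+o(1))a_G(x)$. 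For $b_{\widetilde G}$ one must additionally verify — and this is where the choice of $\kappa$ is used — that the weights never collapse onto a single retained index, i.e.\ that there are no ``deep plateaus'': the two retained indices bracketing the central index each carry weight $\gtrsim$ the peak weight while being $\asymp\sqrt{b_G(x)}$ apart, so the above variance is $\asymp b_G(x)$. This gives $a_{\widetilde G}\asymp_L a_G$ and $b_{\widetilde G}\asymp_L b_G$, hence similarity, and moreover $b_{\widetilde G}(r^{2})\asymp(n_{j+1}-n_j)^{2}$ whenever $r^{2}$ lies at the scale $a_G^{-1}(n_j)$, outside a set of finite logarithmic measure.

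\emph{Lower bound.} Fix $r$, put $x=r^{2}$, and let $n_j$ be the retained index of largest weight at $x$, so that $\sigma_j(x):=c_{n_{j+1}}x^{n_{j+1}}/(c_{n_j}x^{n_j})\asymp1$, $n_j\approx a_G(x)$, and $n_{j+1}-n_j\asymp\sqrt{b_G(x)}$. I would condition on $\mathcal F=\sigma(\xi_{n_k}:k\ne j,j+1)$ and write $\widetilde f(z)=\xi_{n_j}\sqrt{c_{n_j}}\,z^{n_j}+\xi_{n_{j+1}}\sqrt{c_{n_{j+1}}}\,z^{n_{j+1}}+h(z)$ with $h$ $\mathcal F$‑measurable. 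Because the remaining retained terms obey $c_{n_l}x^{n_l}\le e^{-(n_l-a_G(x))^{2}/(Cb_G(x))}\,c_{n_j}x^{n_j}$ and $n_l-n_j\asymp(l-j)\sqrt{b_G(x)}$, there is an event $A\in\mathcal F$ with $\bbp(A)\ge c_0>0$ uniformly in $r$ on which $\max_{|z|=r}|h(z)|<\delta\sqrt{c_{n_j}}\,r^{n_j}$, for a small constant $\delta$. On $A$, let $T$ be the two displayed monomials; on $|z|=r$ one has $\min|T|=r^{n_j}\bigl|\,|\xi_{n_j}|\sqrt{c_{n_j}}-|\xi_{n_{j+1}}|\sqrt{c_{n_{j+1}}}\,r^{n_{j+1}-n_j}\bigr|$, and outside a near‑cancellation set of $(\xi_{n_j},\xi_{n_{j+1}})$ of probability $\le C'\delta$ this exceeds $\delta\sqrt{c_{n_j}}\,r^{n_j}\ge\max_{|z|=r}|h|$, so Rouch\'e's theorem gives
\[
n_{\widetilde f}(r)=n_j+(n_{j+1}-n_j)\,\mathbf{1}\!\left[\,|\xi_{n_j}|\sqrt{c_{n_j}}\le|\xi_{n_{j+1}}|\sqrt{c_{n_{j+1}}}\,r^{n_{j+1}-n_j}\,\right].
\]
Since $\sigma_j(x)\asymp1$ the indicator is $1$ with probability bounded away from $0$ and $1$, so after discarding the near‑cancellation set and taking $\delta$ small, both $\bbp\bigl[n_{\widetilde f}(r)=n_j\mid\mathcal F\bigr]$ and $\bbp\bigl[n_{\widetilde f}(r)=n_{j+1}\mid\mathcal F\bigr]$ are $\ge p_0>0$ on $A$; a random variable carrying point masses $\ge p_0$ at $n_j$ and at $n_{j+1}$ has variance $\ge\tfrac{p_0}{2}(n_{j+1}-n_j)^{2}$, so $\var{n_{\widetilde f}(r)\mid\mathcal F}\ge\tfrac{p_0}{2}(n_{j+1}-n_j)^{2}$ there. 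By the law of total variance,
\[
\var{n_{\widetilde f}(r)}\ \ge\ \ex{\mathbf{1}_A\,\var{n_{\widetilde f}(r)\mid\mathcal F}}\ \gtrsim\ \bbp(A)\,(n_{j+1}-n_j)^{2}\ \asymp_L\ b_{\widetilde G}(r^{2}),
\]
which together with Theorem~\ref{thm:upper_bound} yields $\var{n_{\widetilde f}(r)}\asymp_L b_{\widetilde G}(r^{2})$.

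\emph{Main obstacle.} The delicate part is the similarity step: one must show that thinning out the coefficients of $G$ as above leaves the profile $(a_G,b_G)$ essentially unchanged, which rests on quantitative saddle‑point asymptotics for $\widetilde G$ and, crucially, on ruling out ``deep plateaus'' where the weight concentrates on a single retained index — precisely the place where the regularity encoded in type~II admissibility enters, and the reason the spacing constant $\kappa$ must be chosen carefully. A secondary technical point in the lower bound is establishing the uniform‑in‑$r$ Gaussian decay of $c_{n_l}x^{n_l}/c_{n_j}x^{n_j}$ in $|l-j|$, which is what makes $\bbp(A)$ bounded below and lets the two‑term Rouch\'e reduction run.
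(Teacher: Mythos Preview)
Your proposal is correct and follows essentially the same strategy as the paper: both thin the Taylor series to retain indices spaced roughly $\sqrt{b_G}$ apart, invoke Hayman's saddle-point coefficient asymptotics (the paper's Lemma~\ref{lem:coeff_asymp}) to verify similarity, and apply Rouch\'e's theorem to show that $n_{\widetilde f}(r)$ lands on each of two adjacent retained indices with probability bounded below, giving the lower bound $\gtrsim (n_{j+1}-n_j)^2\asymp b_{\widetilde G}(r^2)$. The paper's implementation differs only cosmetically---it retains multiples of $p_\ell=\sqrt{B(t_\ell)}$ within levels $T_\ell=[t_\ell,t_{\ell+1}]$ rather than your recursively defined sequence, and for the Rouch\'e step it shows directly (via Markov on the tail) that a \emph{single} dominant monomial beats everything else, for each of the two bracketing indices, rather than conditioning on the tail and isolating a two-term polynomial---but the underlying ideas are the same.
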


\begin{rem}
The theorem shows that the upper bound in Theorem \ref{thm:upper_bound}
is sharp (up to a constant) for certain \emph{transcendental} entire
functions \emph{outside} a set of finite logarithmic measure (cf.
Remark \ref{rmk:sharpness}).
\end{rem}

\begin{rem}
By the example in Section \ref{subsec:double_exp_example} (which
is type II admissible) it follows that in general $\varepsilon$ \emph{cannot}
be removed in Corollary \ref{cor:var_bound_by_exp}.
\end{rem}

\subsection{Background and related results}

Following earlier work by Edelman and Kostlan \cite{edelman1995many},
and Offord \cite{offord1967distribution}, some fundamental properties
of zeros of Gaussian analytic functions (GAFs) were developed by Sodin
\cite{sodin2000zeros} (see also \cite[Chapter 13]{Kahane}). Sodin
and Tsirelson \cite{sodin2004random} found the asymptotics of the
variance and proved a central limit theorem for smooth linear statistics
for planar, spherical, and hyperbolic GAFs. More general results about
linear statistics were obtain by Nazarov and Sodin \cite{nazarov2011fluctuations,nazarov2012correlations}.

For the family of hyperbolic GAFs, whose zero sets are invariant with
respect to the isometries of the unit disk, Buckley \cite{buckley2014fluctuations}
found the asymptotics of the variance of the number of zeros (see
also Section \ref{subsec:example_disk}). Buckley and Sodin \cite{buckley2017fluctuations}
studied fluctuations in the increment of the argument along curves
for the planar GAF (GEF). Feldheim \cite{feldheim2018variance} derived
bounds for the growth of the variance of zeros for GAFs which are
invariant with respect to shifts. Ghosh and Peres \cite{GhoshPeres}
showed that the fast decay rate of the variance of smooth linear statistics
of the GEF implies the rigidity of the zero set. Their technique was
recently used by the authors in \cite{kiro2019rigidity} to construct
examples of ``completely rigid'' Gaussian entire functions.

Considerable amount of research is devoted to the study of zero sets
of random algebraic and trigonometric polynomials. Maslova \cite{maslova1974distribution},
Granville and Wigman \cite{granville2011distribution}, Azaïs, Dalmao,
and León \cite{azais2016clt}, and Nguyen and Vu \cite{nguyen2019random}
(by no means an exhaustive list) proved central limit theorems for
real zeros of random polynomials. Bally, Caramellino, and Poly \cite{Non_universality}
studied the dependence of the variance of the number of zeros on the
distribution of the coefficients. Very recently, following the earlier
work \cite{shiffman2010smooth}, Shiffman \cite{Shiffman2020expansion}
found an asymptotic expansion for the variance of smooth statistics
of random zeros on complex manifolds.

\subsection*{Acknowledgments}

We thank Misha Sodin for encouraging us to work on this project, and
for helpful discussions. We thank Aron Wennman for helpful discussions.

\section{Definitions and Preliminaries \label{sec:Results}}

Given an analytic function $G(z)=\sum_{n\geq0}a_{n}^{2}z^{n},$ we
denote its radius of convergence by $R_{G}$, and assume from here
on that $R_{G}\in\left\{ 1,\infty\right\} $. We always assume $a_{n}$
are non-negative, and contain infinitely many non-zero terms (i.e.
$G$ is \emph{transcendental}). We recall the following notation
\[
a\left(z\right)=a_{G}\left(z\right)=z\frac{G^{\prime}\left(z\right)}{G\left(z\right)},\quad b\left(z\right)=b_{G}\left(z\right)=za_{G}^{\prime}\left(z\right).
\]

We use little-$o$ and big-$O$ notation in the standard way. Given
two functions $g_{1},g_{2}:\bbr\to\bbr^{+}$, we write $g_{1}\lesssim g_{2}$
if $g_{1}=O\left(g_{2}\right)$, possibly on a subset of $\bbr$ (depending
on the context). We also write $g_{1}\sim g_{2}$ when $g_{1}\left(x\right)=\left(1+o\left(1\right)\right)g_{2}\left(x\right)$
as $x\to\infty$. Recall that $g_{1}\lesssim_{L}g_{2}$ when there
exists a set $\cN\subset\bbr^{+}$ and a constant $C>0$ so that $g_{1}\left(x\right)\le Cg_{2}\left(x\right)$
for all $x\in\cN$, and $\bbr^{+}\backslash\cN$ is a set of \emph{finite}
logarithmic measure. Let $I\subset\bbr^{+}$ be a open interval, we
denote the fact that $h:I\to\bbr^{+}$ is a \emph{non-decreasing}
and \emph{unbounded} function on $I$ by writing $h\uparrow\infty$.

\subsection{A formula for the variance}

Let $G\left(z\right)=\sum_{n=0}a_{n}^{2}z^{n}$ be the covariance
function of a Gaussian analytic function $f$, with radius of convergence
$R_{G}\in\left\{ 1,\infty\right\} $. For the rest of the paper it
will be convenient to put $e^{t}=r^{2}$, and use the exponential
change of variables
\[
H\left(t\right)=G\left(e^{t}\right)=\sum_{n\ge0}a_{n}^{2}e^{nt},
\]
and also define
\[
t_{G}\eqdef\log R_{G},\quad A\left(t\right)\eqdef\left(\log H\left(t\right)\right)^{\prime}=a\left(e^{t}\right),\quad B\left(t\right)\eqdef A^{\prime}\left(t\right)=b\left(e^{t}\right).
\]
Notice that $A\left(z\right),B\left(z\right)$ are meromorphic functions
which are given by 
\[
A\left(z\right)=\frac{H^{\prime}\left(z\right)}{H\left(z\right)},\qquad B\left(z\right)=\frac{H^{\prime\prime}\left(z\right)}{H\left(z\right)}-\left(\frac{H^{\prime}\left(z\right)}{H\left(z\right)}\right)^{2}=H^{-2}\left(z\right)\left(\sum_{n<m}\left(n-m\right)^{2}a_{n}^{2}a_{m}^{2}e^{\left(m+n\right)z}\right).
\]
We will repeatedly use the following formula for the variance of $n_{f}\left(r\right)$
: 
\begin{align}
\var{n_{f}\left(r\right)} & =\frac{1}{2\pi}\int_{-\pi}^{\pi}\frac{\left|A\left(t+i\theta\right)-A(t)\right|^{2}}{\exp\left(2\int_{0}^{\theta}\text{Im}\left[A(t+i\varphi)\right]\dd\varphi\right)-1}\,\dd\theta,\label{eq:Variance_formula}
\end{align}
for its proof see Claim \ref{clm:var_formula_appendix} in Appendix
\ref{sec:Kahane_formulas} (cf. \cite[p. 195]{Kahane}). We will also
use the following equivalent form
\begin{equation}
\var{n_{f}\left(r\right)}=\frac{1}{2\pi}\int_{-\pi}^{\pi}\frac{\left|H\left(t\right)H^{\prime}\left(t+i\theta\right)-H\left(t+i\theta\right)H^{\prime}\left(t\right)\right|^{2}}{H^{2}\left(t\right)\left(H^{2}\left(t\right)-\left|H^{2}\left(t+i\theta\right)\right|\right)}\,\dd\theta.\label{eq:variance_formula_w_H}
\end{equation}

\subsection{Local admissibility}

In order to bound the integral in (\ref{eq:Variance_formula}) from
below, we will introduce the following definition, which is motivated
by a result from the Wiman-Valiron theory about the value distribution
of entire functions, more precisely the asymptotics of such functions
near their points of maximum modulus (see \cite[Theorem 10]{hayman1974local}).
\begin{defn}
An analytic function $H$ is called \emph{local} \emph{$\delta$-admissible}
on a set $T\subset\left(-\infty,t_{G}\right)$ if there is a function
$\delta\left(t\right):\left[-\infty,t_{G}\right)\to\left(0,\pi\right)$
so that for any $\veps>0$ there exists an $\eta>0$, such that for
$t\in T\cap\left(t_{0}\left(\veps\right),t_{G}\right)$ and $\left|\tau\right|\leq\eta\delta\left(t\right)$
\begin{equation}
\log\frac{H\left(t+\tau\right)}{H\left(t\right)}=\tau A\left(t\right)+\frac{1}{2}\tau^{2}B\left(t\right)+h_{t}\left(\tau\right),\quad\mbox{where }\left|h_{t}\left(\tau\right)\right|\le\veps\left|\tau\right|^{2}B\left(t\right).\label{eq:local_admis_cond}
\end{equation}
\end{defn}

\begin{rem}
It is implicitly assumed that $t+\delta\left(t\right)<t_{G}$ for
all $t<t_{G}$.
\end{rem}

We will show in Section \ref{subsec:proof_loc_admis_non_dec_B} that
if $B\uparrow\infty$, then $H$ is local \emph{$\delta$-}admissible
outside a set of finite logarithmic measure, with $\delta\left(t\right)=\frac{1}{\sqrt{B\left(t\right)}}$.
With a (smaller) appropriate choice of $\delta$ this statement is
also true without making \emph{any} assumptions on $B$, for the details
see Section \ref{subsec:local_admis_no_assump}.

\subsection{\label{subsec:prelim} Lower bound for the variance assuming local
admissibility }

Let $f$ be a Gaussian analytic function with covariance function
$G\left(z\right)=\sum_{n=0}a_{n}^{2}z^{n}$, and radius of convergence
$R_{G}\in\left\{ 1,\infty\right\} $. Recall our notation
\[
H\left(t\right)=G\left(e^{t}\right)=\sum_{n\ge0}a_{n}^{2}e^{nt},
\]
and 
\[
t_{G}=\log R_{G},\quad A\left(t\right)=a\left(e^{t}\right),\quad B\left(t\right)=b\left(e^{t}\right)=A^{\prime}\left(t\right).
\]
In the next lemma we use Cauchy's integral formula to obtain estimates
for $A,B$ when $H$ is a local $\delta$-admissible function.
\begin{lem}
\label{lem:aymp_in_main_sector} Let $H$ be a local $\delta-$admissible
function on $T$ with  $t_{G}\in\left\{ 0,\infty\right\} $. For
any $\veps>0$, there exists $\eta>0$ such that for any $t\in T\cap\left(t_{0}\left(\veps\right),t_{G}\right)$,
and $\left|\theta\right|\le\frac{\eta}{2}\delta\left(t\right)$ we
have
\[
\left(1-\veps\right)B\left(t\right)\left|\theta\right|\le\left|A\left(t+i\theta\right)-A(t)\right|\le\left(1+\veps\right)B\left(t\right)\left|\theta\right|,
\]
and
\[
\frac{1-\veps}{2}\theta^{2}B\left(t\right)\le\int_{0}^{\theta}\Im{A\left(t+i\varphi\right)}d\varphi\le\frac{1+\veps}{2}\theta^{2}B\left(t\right).
\]
\end{lem}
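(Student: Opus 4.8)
The plan is to differentiate the local admissibility expansion \eqref{eq:local_admis_cond} with respect to the real direction and estimate the resulting error term via Cauchy's integral formula. Fix $\veps>0$. The admissibility condition gives, for $t\in T\cap(t_0(\veps),t_G)$ and $|\tau|\le\eta\delta(t)$, the representation $\log H(t+\tau)-\log H(t)=\tau A(t)+\tfrac12\tau^2 B(t)+h_t(\tau)$ with $|h_t(\tau)|\le\veps|\tau|^2 B(t)$. Here $\tau$ ranges over complex values in the disk of radius $\eta\delta(t)$ about $0$. Writing $g_t(\tau)\eqdef\log H(t+\tau)-\log H(t)-\tau A(t)-\tfrac12\tau^2 B(t)=h_t(\tau)$, which is analytic in $\tau$ on that disk, I want to control $g_t'(i\theta)$ for $|\theta|\le\tfrac{\eta}{2}\delta(t)$, since
\[
A(t+i\theta)-A(t)=\frac{d}{d\tau}\Big|_{\tau=i\theta}\big(\log H(t+\tau)-\log H(t)\big)=i\theta B(t)+g_t'(i\theta),
\]
so that $|A(t+i\theta)-A(t)-i\theta B(t)|=|g_t'(i\theta)|$, and the first pair of inequalities follows once we show $|g_t'(i\theta)|\le C\veps\,|\theta|\,B(t)$ for a harmless constant $C$.

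The key step is the Cauchy estimate for $g_t'$. For a point $\tau_0=i\theta$ with $|\theta|\le\tfrac{\eta}{2}\delta(t)$, apply Cauchy's formula on the circle $|\tau-\tau_0|=\rho$ with $\rho$ chosen so that the circle stays inside the disk of radius $\eta\delta(t)$ — e.g. $\rho=|\theta|$ works if $|\theta|\le\tfrac{\eta}{2}\delta(t)$, since then $|\tau|\le 2|\theta|\le\eta\delta(t)$ on the circle. This yields
\[
|g_t'(i\theta)|\le\frac{1}{\rho}\max_{|\tau-i\theta|=\rho}|g_t(\tau)|\le\frac{1}{|\theta|}\cdot\veps\,(2|\theta|)^2 B(t)=4\veps\,|\theta|\,B(t).
\]
Combining, $|A(t+i\theta)-A(t)-i\theta B(t)|\le 4\veps|\theta|B(t)$, and the triangle inequality gives $(1-4\veps)B(t)|\theta|\le|A(t+i\theta)-A(t)|\le(1+4\veps)B(t)|\theta|$; replacing $\veps$ by $\veps/4$ at the outset gives the stated bound with constant $1\pm\veps$.

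For the integral statement, write $\Im[A(t+i\varphi)] = \varphi B(t) + \Im[g_t'(i\varphi)]$, integrate over $\varphi\in[0,\theta]$ (assuming $\theta>0$; the case $\theta<0$ is symmetric after noting $A(t+i\varphi)$ and $A(t-i\varphi)$ are conjugates so $\Im[A(t+i\varphi)]$ is odd in $\varphi$), obtaining $\int_0^\theta \Im[A(t+i\varphi)]\,d\varphi = \tfrac12\theta^2 B(t) + \int_0^\theta \Im[g_t'(i\varphi)]\,d\varphi$, and bound the error by $\int_0^{|\theta|} 4\veps\,\varphi\,B(t)\,d\varphi = 2\veps\,\theta^2 B(t)$; again rescaling $\veps$ absorbs the constant and yields $\tfrac{1-\veps}{2}\theta^2 B(t)\le\int_0^\theta\Im[A(t+i\varphi)]\,d\varphi\le\tfrac{1+\veps}{2}\theta^2 B(t)$. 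The main technical point to be careful about is the bookkeeping of the domain of validity: the admissibility expansion holds on a disk of radius $\eta\delta(t)$, and one must keep the Cauchy circles strictly inside it, which forces the factor $\tfrac12$ (or any fixed fraction $<1$) in the range $|\theta|\le\tfrac{\eta}{2}\delta(t)$; everything else is routine, and there is no genuine obstacle beyond tracking these constants and the parity of $\varphi\mapsto\Im[A(t+i\varphi)]$.
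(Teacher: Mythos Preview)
Your proof is correct and follows essentially the same approach as the paper: differentiate the admissibility expansion, control $h_t'(i\theta)$ via Cauchy's formula on a circle of radius $|\theta|$ centered at $i\theta$, and then integrate the resulting pointwise bound on $\Im[A(t+i\varphi)]$. You are in fact slightly more careful than the paper with the constant in the Cauchy estimate (the paper writes $|h_t'(\tau)|\le \veps|\tau|B(t)$ where a factor $4$ should appear, which you correctly record and then absorb by rescaling $\veps$).
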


\begin{proof}
Given $\veps>0$, choose $\eta>0$ as in the definition of a local
$\delta-$admissible function, and let $0<\left|\tau\right|\le\frac{\eta}{2}\delta\left(t\right)$.
Differentiating (\ref{eq:local_admis_cond}) with respect to $\tau$
we obtain
\begin{equation}
A\left(t+\tau\right)=\frac{H^{\prime}\left(t+\tau\right)}{H\left(t+\tau\right)}=A\left(t\right)+\tau B\left(t\right)+h_{t}^{\prime}\left(\tau\right),\label{eq:admis_diff}
\end{equation}
with
\[
h_{t}^{\prime}\left(\tau\right)=\frac{1}{2\pi i}\int_{\Gamma}\frac{h_{t}\left(z\right)}{\left(z-\tau\right)^{2}}\dd z,\qquad\text{where }\text{\ensuremath{\Gamma=\left\{ z:\left|z-\tau\right|=\left|\tau\right|\right\} .}}
\]
By local-admissibility we have
\[
\left|h_{t}^{\prime}\left(\tau\right)\right|\le\frac{1}{\left|\tau\right|}\cdot\veps\left|\tau\right|^{2}B\left(t\right)=\veps\left|\tau\right|B\left(t\right).
\]
It follows from (\ref{eq:admis_diff}) that for $t\in T\cap\left(t_{0}\left(\veps\right),t_{G}\right)$
and $\left|\theta\right|\le\frac{\eta}{2}\delta\left(t\right)$ we
have 
\[
\left(1-\veps\right)B\left(t\right)\left|\theta\right|\le\left|A\left(t+i\theta\right)-A(t)\right|\le\left(1+\veps\right)B\left(t\right)\left|\theta\right|.
\]
Since $A\left(t\right)\in\bbr$ we also have there
\begin{align*}
\int_{0}^{\theta}\text{Im}\left[A(t+i\varphi)\right]\dd\varphi & \le\int_{0}^{\theta}\text{Im}\left[A\left(t\right)+i\varphi B\left(t\right)\left(1+\veps\right)\right]\dd\varphi\\
 & =\frac{\left(1+\veps\right)}{2}\theta^{2}B\left(t\right),
\end{align*}
and similarly for the lower bound.
\end{proof}
For $J\subset\bbt$ we define the following integral

\begin{align}
\varint HtJ & \eqdef\frac{1}{2\pi}\int_{J}\frac{\left|A\left(t+i\theta\right)-A(t)\right|^{2}}{\exp\left(2\int_{0}^{\theta}\text{Im}\left[A(t+i\varphi)\right]\dd\varphi\right)-1}\,\dd\theta.\label{eq:Ical_def}
\end{align}

\begin{cor}
\label{cor:lower_bnd_var} Let $H$ be a local $\delta-$admissible
function on $T$ with  $t_{G}\in\left\{ 0,\infty\right\} $, then
for $t\in T$ we have 
\[
\var{n_{f}\left(r\right)}=\varint Ht{\bbt}\gtrsim\min\left\{ \delta\left(t\right)B\left(t\right),\sqrt{B\left(t\right)}\right\} .
\]
\end{cor}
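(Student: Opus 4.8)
The plan is to use the variance formula $\var{n_f(r)} = \varint Ht{\bbt}$ from Corollary~\ref{cor:lower_bnd_var}'s hypothesis together with the two-sided estimates of Lemma~\ref{lem:aymp_in_main_sector}, and simply restrict the integral to the ``good'' range of $\theta$ where those estimates hold. Since the integrand in (\ref{eq:Ical_def}) is non-negative, we have $\varint Ht{\bbt} \ge \varint Ht{J}$ for any $J \subset \bbt$; the natural choice is $J = \{\theta : |\theta| \le \rho\}$ for a cutoff $\rho = \rho(t)$ to be chosen. Fix $\veps = \tfrac12$ (say), obtain the corresponding $\eta > 0$ from Lemma~\ref{lem:aymp_in_main_sector}, and take $\rho(t) = \min\{\tfrac{\eta}{2}\delta(t),\, B(t)^{-1/2}\}$ — this keeps us inside the interval where both conclusions of the lemma are valid and, at the same time, controls the exponential in the denominator.

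On this range, Lemma~\ref{lem:aymp_in_main_sector} gives $|A(t+i\theta)-A(t)|^2 \gtrsim B(t)^2\theta^2$ in the numerator, and $\int_0^\theta \Im{A(t+i\varphi)}\,\dd\varphi \le \tfrac34 \theta^2 B(t) \le \tfrac34$ in the exponent (using $|\theta| \le B(t)^{-1/2}$). Hence the denominator $\exp(2\int_0^\theta \Im{A(t+i\varphi)}\,\dd\varphi) - 1$ is $\asymp \theta^2 B(t)$ — for $x$ in a bounded interval, $e^{2x}-1 \asymp x$, with $x = \int_0^\theta \Im{A}\,\dd\varphi \asymp \theta^2 B(t)$ by the lemma's lower bound. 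Therefore the integrand is $\gtrsim \dfrac{B(t)^2\theta^2}{\theta^2 B(t)} = B(t)$ uniformly for $|\theta| \le \rho(t)$, and integrating over $J$ yields
\[
\varint Ht{\bbt} \;\ge\; \frac{1}{2\pi}\int_{-\rho(t)}^{\rho(t)} B(t)\,\dd\theta \;\gtrsim\; \rho(t)\,B(t) \;=\; \min\bigl\{\delta(t)B(t),\,\sqrt{B(t)}\bigr\},
\]
up to the absolute constant $\eta/(2\pi)$, which is exactly the asserted bound (absorbing $\eta$ into $\gtrsim$).

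There is one point to handle with a little care: the cutoff $\rho(t)$ must also be $\le \pi$ so that $J \subset \bbt$ is a genuine subinterval. When $\delta(t)$ and $B(t)^{-1/2}$ are both large this is not automatic, but in that regime $\sqrt{B(t)}$ is small, and one can instead note that on a fixed interval $|\theta| \le c$ (with $c$ a small absolute constant, $c \le \pi$) the same computation — using only $B(t) \le$ const or splitting according to whether $B(t)\theta^2 \lesssim 1$ — gives the integrand $\gtrsim B(t)$ on the portion where $B(t)\theta^2 \le 1$, which has length $\asymp \min\{c, B(t)^{-1/2}\}$, again producing $\gtrsim \sqrt{B(t)}$. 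I expect this bookkeeping around the interplay of the three scales $\delta(t)$, $B(t)^{-1/2}$, $\pi$ to be the only mildly delicate step; the heart of the argument is the trivial inequality $\varint Ht{\bbt} \ge \varint Ht{J}$ combined with the pointwise estimate of the integrand furnished directly by Lemma~\ref{lem:aymp_in_main_sector}, so there is no real obstacle.
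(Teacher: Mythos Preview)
Your proof is correct and follows essentially the same approach as the paper: restrict the variance integral to $|\theta|\le\Delta(t)\eqdef\min\{\tfrac{\eta}{2}\delta(t),B(t)^{-1/2}\}$, use Lemma~\ref{lem:aymp_in_main_sector} with $\veps=\tfrac12$ to bound the numerator below by $\tfrac14 B(t)^2\theta^2$ and the exponent above by $\tfrac32\theta^2 B(t)\le\tfrac32$, and then use $e^x-1\asymp x$ on $[0,2]$ to conclude the integrand is $\gtrsim B(t)$, giving $\varint Ht{\bbt}\gtrsim B(t)\Delta(t)$. Your final paragraph is unnecessary worry: the definition of local $\delta$-admissibility already requires $\delta(t)\in(0,\pi)$, so (taking $\eta\le 2$ without loss of generality) one always has $\tfrac{\eta}{2}\delta(t)<\pi$ and the cutoff automatically lies inside $\bbt$.
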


\begin{proof}
Applying Lemma \ref{lem:aymp_in_main_sector} with $\veps=\frac{1}{2}$,
there exists an $\eta>0$ so that for $t$ sufficiently large and
$\left|\theta\right|\leq\frac{\eta}{2}\delta\left(t\right)$,
\[
\frac{\left|A\left(t+i\theta\right)-A(t)\right|^{2}}{\exp\left(2\int_{0}^{\theta}\text{Im}\left[A(t+i\varphi)\right]\dd\varphi\right)-1}\ge\frac{\frac{1}{4}B^{2}\left(t\right)\theta^{2}}{\exp\left(\frac{3}{2}\theta^{2}B\left(t\right)\right)-1}.
\]
Put $\Delta\left(t\right)\eqdef\min\left\{ \frac{\eta}{2}\delta\left(t\right),\frac{1}{\sqrt{B\left(t\right)}}\right\} $,
by the inequality $e^{x}-1\le4x$ which is valid for $x\in\left[0,2\right]$,
we find
\[
\varint Ht{\bbt}\ge\varint Ht{\left[-\Delta\left(t\right),\Delta\left(t\right)\right]}\ge\frac{1}{4}B^{2}\left(t\right)\int_{-\Delta\left(t\right)}^{\Delta\left(t\right)}\frac{\theta^{2}}{6B\left(t\right)\theta^{2}}\,\dd\theta\ge\frac{1}{24}B\left(t\right)\Delta\left(t\right).
\]
\end{proof}

\section{\label{sec:proof_lower_bound} Lower bound for the variance}

In this section we prove Theorem \ref{thm:lower_bound}. First we
assume that $b$ is non-decreasing. Below the letters $\lambda,t,\theta,y$
denote real quantities, and $\tau$ is a complex number. It will be
convenient to put $e^{t}=r^{2}$.

\subsection{\label{subsec:normal_values} Normal values of $t$ and the set $\protect\cX$}

We will now define a set $\cX\subset\bbr^{+}$ whose \emph{complement}
is of \emph{finite} Lebesgue measure, where the function $B$ increases
slowly. Since $B$ is unbounded we may choose a sequence $t_{\ell}\uparrow\infty$
so that 
\[
B\left(t_{\ell}\right)=\ell^{6},\qquad\ell\ge1.
\]
 We then define a sequence of intervals $\left\{ T_{\ell}\right\} _{\ell=1}^{\infty}$
by
\[
T_{\ell}=\left[t_{\ell},t_{\ell+1}\right],\qquad\left|T_{\ell}\right|=t_{\ell+1}-t_{\ell}.
\]

\begin{defn}
The interval $T_{\ell}$ is \emph{long} if
\begin{equation}
\left|T_{\ell}\right|\ge\frac{8}{\ell^{2}},\label{eq:long_interval_cond}
\end{equation}
otherwise it is \emph{short}. For a long interval $T_{\ell}$ we define
its \emph{interior} by
\[
\longint{\ell}=\left[t_{\ell}+\frac{2}{l^{2}},t_{\ell+1}-\frac{2}{\ell^{2}}\right].
\]
\end{defn}

\begin{rem}
Notice that long intervals have a non-trivial interior.
\end{rem}

\begin{defn}
The set $\cX$ of \emph{normal} values of $t$ is given by
\[
\cX\eqdef\bigcup_{T_{\ell}\text{ long}}\longint{\ell}.
\]
\end{defn}

\begin{rem}
Notice that $\sum_{T_{\ell}\text{ short}}\left|T_{\ell}\right|<\infty$
and also $\sum_{T_{\ell}\text{ long}}\left|T_{\ell}\setminus\longint{\ell}\right|<\infty$.
Therefore, the Lebesgue measure of the set $\bbr^{+}\backslash\cX$
is finite. 
\end{rem}

\begin{rem}
Throughout the proof we may need to take the value of $\ell$ to be
sufficiently large, thus we may drop finitely many intervals $\longint{\ell}$
from $\cX$ without explicitly stating it.
\end{rem}

\subsection{Proof of local admissibility for non-decreasing $B$\label{subsec:proof_loc_admis_non_dec_B}}

Let $T_{\ell}$ be a long interval with $\ell\ge4$. Since $B$ is
non-decreasing, we have for all $t\in T_{\ell}$
\begin{equation}
B\left(t_{\ell}\right)\le B\left(t\right)\le B\left(t_{\ell+1}\right)=\left(\ell+1\right)^{6}\le4\ell^{6}=4B\left(t_{\ell}\right).\label{eq:bound_B_in_T_ell}
\end{equation}
By the Lagrange formula for the remainder in the Taylor approximation
for $\log\frac{H\left(y+\lambda\right)}{H\left(y\right)}$ near $\lambda=0$,
we have
\[
\log\frac{H\left(y+\lambda\right)}{H\left(y\right)}=\lambda A\left(y\right)+\frac{\lambda^{2}}{2}B\left(c\right),
\]
where $\left|c-y\right|\le\left|\lambda\right|$. If $y+\lambda,y\in T_{\ell}$,
then $c\in T_{\ell}$, and we deduce that
\begin{equation}
\left|\log\frac{H\left(y+\lambda\right)}{H\left(y\right)}-\lambda A\left(y\right)\right|\le\frac{\lambda^{2}}{2}B\left(c\right)\le2\lambda^{2}B\left(t_{\ell}\right).\label{eq:gen_func_est}
\end{equation}

\subsubsection{An adaptation of Rosenbloom's method}

Recall that
\[
H\left(z\right)=\sum_{n=0}^{\infty}a_{n}^{2}e^{nz},
\]
where $a_{n}$ are non-negative. Following Rosenbloom \cite{rosenbloom1962probability}
we define for $t\in\bbr$, the random variable $X_{t}\in\bbn$ as
follows:
\[
\pr{X_{t}=k}=\frac{a_{k}^{2}e^{kt}}{H\left(t\right)},\quad k\in\bbn.
\]
Then
\[
\ex{X_{t}}=\frac{1}{H\left(t\right)}\sum_{k=0}^{\infty}ka_{k}^{2}e^{kt}=\frac{H^{\prime}\left(t\right)}{H\left(t\right)}=A\left(t\right),
\]
and moreover

\begin{align*}
\var{X_{t}} & =\ex{X_{t}^{2}}-\left(\ex{X_{t}}\right)^{2}=\frac{1}{H\left(t\right)}\sum_{k=0}^{\infty}k^{2}a_{k}^{2}e^{kt}-A^{2}\left(t\right)\\
 & =\frac{H^{\prime\prime}\left(t\right)}{H\left(t\right)}-\left(\frac{H^{\prime}\left(t\right)}{H\left(t\right)}\right)^{2}=B\left(t\right).
\end{align*}
In order to approximate $H$ by an appropriate (exponential) polynomial,
we first prove the following lemma.
\begin{lem}
\label{lem:poly_approx}For $t\in\longint{\ell}$, and $\left|\tau-t\right|<\frac{1}{\sqrt{B\left(t\right)}}$,
we have
\[
\left|E\left(\tau\right)\right|\eqdef\left|\sum_{\left|k-A\left(t\right)\right|>s\sqrt{B\left(t\right)}}a_{k}^{2}e^{k\text{\ensuremath{\tau}}}\right|\le2H\left(\Re{\tau}\right)\exp\left(-\frac{1}{8}\left(s-4\right)^{2}\right),
\]
for all $4<s<B^{1/6}\left(t\right)$.
\end{lem}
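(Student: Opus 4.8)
The plan is to establish a Bernstein-type large deviation bound for the random variable $X_{\sigma}$ with $\sigma\eqdef\Re{\tau}$, by a Chernoff argument. Since the series defining $E(\tau)$ has complex terms, the triangle inequality and the defining identity for $X_{\sigma}$ give
\[
\left|E(\tau)\right|\le\sum_{\left|k-A\left(t\right)\right|>s\sqrt{B\left(t\right)}}a_{k}^{2}e^{k\sigma}=H\left(\sigma\right)\cdot\pr{\left|X_{\sigma}-A\left(t\right)\right|>s\sqrt{B\left(t\right)}},
\]
so it suffices to bound the probability. First note $\sigma\in T_{\ell}$: indeed $\left|\sigma-t\right|\le\left|\tau-t\right|<B\left(t\right)^{-1/2}\le\ell^{-3}<2\ell^{-2}$, and $t\in\longint{\ell}$ lies at distance $\ge2\ell^{-2}$ from the endpoints of $T_{\ell}$. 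Next I would replace the center $A(t)$ by the mean $A(\sigma)=\ex{X_{\sigma}}$: since $A'=B$ and, by (\ref{eq:bound_B_in_T_ell}), $B\le4B\left(t_{\ell}\right)\le4B\left(t\right)$ throughout $T_{\ell}$ for $\ell\ge4$, one gets $\left|A(\sigma)-A(t)\right|=\left|\int_{t}^{\sigma}B\right|\le4B\left(t\right)^{1/2}$, hence $\left\{ \left|X_{\sigma}-A\left(t\right)\right|>s\sqrt{B\left(t\right)}\right\}\subset\left\{ \left|X_{\sigma}-A\left(\sigma\right)\right|>\left(s-4\right)\sqrt{B\left(t\right)}\right\}$; this is where the shift $s\mapsto s-4$ enters.

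The core is the moment generating function bound. For $\lambda\in\bbr$ one has $\ex{e^{\lambda\left(X_{\sigma}-A\left(\sigma\right)\right)}}=\exp\left(\log\frac{H\left(\sigma+\lambda\right)}{H\left(\sigma\right)}-\lambda A\left(\sigma\right)\right)$, and provided $\sigma+\lambda\in T_{\ell}$, the quadratic Taylor remainder estimate (\ref{eq:gen_func_est}) at $y=\sigma$ gives $\left|\log\frac{H\left(\sigma+\lambda\right)}{H\left(\sigma\right)}-\lambda A\left(\sigma\right)\right|\le2\lambda^{2}B\left(t_{\ell}\right)\le2\lambda^{2}B\left(t\right)$, so $\ex{e^{\lambda\left(X_{\sigma}-A\left(\sigma\right)\right)}}\le e^{2\lambda^{2}B\left(t\right)}$; that is, $X_{\sigma}-A(\sigma)$ is sub-Gaussian with variance proxy $4B(t)$. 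The standard optimization $\lambda=\pm\left(s-4\right)/\left(4\sqrt{B\left(t\right)}\right)$ in Markov's inequality, summing the two one-sided tails, yields $\pr{\left|X_{\sigma}-A\left(\sigma\right)\right|>\left(s-4\right)\sqrt{B\left(t\right)}}\le2e^{-\left(s-4\right)^{2}/8}$. Combining with the two displays above gives exactly $\left|E(\tau)\right|\le2H\left(\sigma\right)e^{-\left(s-4\right)^{2}/8}$.

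The one step that genuinely uses the hypotheses $t\in\longint{\ell}$ and $s<B^{1/6}\left(t\right)$ — and which I expect to be the only real obstacle rather than routine bookkeeping — is verifying that the Chernoff shift stays inside $T_{\ell}$, where (\ref{eq:gen_func_est}) applies. With the optimal $\lambda$,
\[
\left|\sigma+\lambda-t\right|\le\left|\sigma-t\right|+\left|\lambda\right|<B\left(t\right)^{-1/2}+\frac{s-4}{4\sqrt{B\left(t\right)}}=\frac{s}{4}B\left(t\right)^{-1/2}<\frac{1}{4}B\left(t\right)^{-1/3}\le\frac{1}{4\ell^{2}}<\frac{2}{\ell^{2}},
\]
the last expression being the distance from $t\in\longint{\ell}$ to the endpoints of $T_{\ell}$; hence $\sigma+\lambda\in T_{\ell}$ (and likewise for $\sigma-|\lambda|$). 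Everything is taken for $\ell$ large enough, say $\ell\ge4$, the finitely many smaller values being discarded as already noted. The remaining pieces — the elementary minimization of $-\lambda u+2\lambda^{2}B(t)$ and the splitting of the two-sided tail (which produces the factor $2$) — are routine.
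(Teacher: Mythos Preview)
Your proof is correct and follows essentially the same Chernoff/Rosenbloom argument as the paper: reduce to real $\tau$, identify the tail sum with $H(\sigma)\pr{\left|X_{\sigma}-A(t)\right|>s\sqrt{B(t)}}$, bound the moment generating function via the quadratic Taylor estimate (\ref{eq:gen_func_est}), choose $\lambda=\pm(s-4)/(4\sqrt{B(t)})$, and use $s<B^{1/6}(t)$ together with $t\in\longint{\ell}$ to keep $\sigma+\lambda$ inside $T_{\ell}$. The only cosmetic difference is that you first recenter from $A(t)$ to the true mean $A(\sigma)$ and then run a clean sub-Gaussian bound, whereas the paper keeps $A(t)$ as the reference point and absorbs the $\lambda\bigl(A(\sigma)-A(t)\bigr)$ term directly into the exponent; the resulting constants and the final bound are identical.
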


\begin{proof}
Since $\left|a_{k}^{2}e^{k\tau}\right|=a_{k}^{2}e^{k\Re{\tau}}$,
by the triangle inequality we may assume $\tau$ is real. Notice that
\[
\sum_{\left|k-A\left(t\right)\right|>s\sqrt{B\left(t\right)}}a_{k}^{2}e^{k\tau}=H\left(\tau\right)\pr{\left|X_{\tau}-A\left(t\right)\right|>s\sqrt{B\left(t\right)}},
\]
and also
\[
\ex{e^{\lambda X_{\tau}}}=\frac{1}{H\left(\tau\right)}\sum_{k=0}^{\infty}e^{\lambda k}a_{k}^{2}e^{k\tau}=\frac{H\left(\tau+\lambda\right)}{H\left(\tau\right)}.
\]
Fix $\lambda>0$ so that $\tau+\lambda\in T_{\ell}$. By Markov's
inequality
\begin{align*}
\pr{X_{\tau}>A\left(t\right)+s\sqrt{B\left(t\right)}} & \le\frac{\ex{e^{\lambda X_{\tau}}}}{\exp\left(\lambda\left(A\left(t\right)+s\sqrt{B\left(t\right)}\right)\right)}\\
 & =\exp\left(\log\frac{H\left(\tau+\lambda\right)}{H\left(\tau\right)}-\lambda\left(A\left(t\right)+s\sqrt{B\left(t\right)}\right)\right).
\end{align*}
By (\ref{eq:gen_func_est}) and (\ref{eq:bound_B_in_T_ell}) we have
\begin{align*}
\log\frac{H\left(\tau+\lambda\right)}{H\left(\tau\right)}-\lambda A\left(t\right) & =\log\frac{H\left(\tau+\lambda\right)}{H\left(\tau\right)}-\lambda A\left(\tau\right)+\lambda\left[A\left(\tau\right)-A\left(t\right)\right]\\
 & \le\lambda\left[A\left(\tau\right)-A\left(t\right)\right]+2\lambda^{2}B\left(t_{\ell}\right)\\
 & \le4\lambda\left|\tau-t\right|B\left(t_{\ell}\right)+2\lambda^{2}B\left(t_{\ell}\right)\le4\lambda\sqrt{B\left(t\right)}+2\lambda^{2}B\left(t\right).
\end{align*}
Therefore, by taking $\lambda=\frac{s-4}{4\sqrt{B\left(t\right)}}$,
we get
\begin{align*}
\log\pr{X_{\tau}>A\left(t\right)+s\sqrt{B\left(t\right)}} & \le4\lambda\sqrt{B\left(t\right)}+2\lambda^{2}B\left(t\right)-\lambda s\sqrt{B\left(t\right)}\\
 & \le-\frac{1}{8}\left(s-4\right)^{2}.
\end{align*}
Since $s<B^{1/6}\left(t\right)$ we obtain by (\ref{eq:bound_B_in_T_ell})
\begin{align*}
\tau+\lambda & <t_{\ell+1}-\frac{2}{\ell^{2}}+\frac{1}{\sqrt{B\left(t\right)}}+\frac{1}{4B\left(t\right)^{1/3}}\le t_{\ell+1}-\frac{2}{\ell^{2}}+\frac{1}{\sqrt{B\left(t_{\ell}\right)}}+\frac{1}{4B\left(t_{\ell}\right)^{1/3}}\\
 & =t_{\ell+1}-\frac{2}{\ell^{2}}+\frac{1}{\ell^{3}}+\frac{1}{4\ell^{2}}<t_{\ell+1},
\end{align*}
in the same way we see that $\tau+\lambda>t_{\ell}$, so that $\tau+\lambda\in T_{\ell}$
as required. The bound for $\pr{X_{t}<A\left(t\right)+s\sqrt{B\left(t\right)}}$
is obtained similarly.
\end{proof}
\begin{defn}
We say that an exponential polynomial with \emph{real} exponents is
of \emph{width} at most $\omega$, if it is of the form
\[
\sum_{k=0}^{m}c_{k}e^{\alpha_{k}z},\quad\text{with }\max_{k}\left|\alpha_{k}\right|\le\omega.
\]
\end{defn}

The following lemma adapts \cite[Lemma 8]{hayman1974local} to exponential
polynomials.
\begin{lem}
\label{lem:exp_poly_near_max} Let $P\left(z\right)$ be an exponential
polynomial of width at most $\omega$, and non-negative coefficients.
We have for any $t\in\bbr$, and $\left|\tau-t\right|\le\frac{1}{5\omega}$
that
\[
\frac{3}{4}P\left(t\right)\le\left|P\left(\tau\right)\right|\le\frac{5}{4}P\left(t\right).
\]
\end{lem}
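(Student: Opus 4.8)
Write $P(z)=\sum_{k=0}^{m}c_{k}e^{\alpha_{k}z}$ with $c_{k}\ge0$ and $\max_{k}\left|\alpha_{k}\right|\le\omega$, and set $\tau=t+h$ with $\left|h\right|\le\frac{1}{5\omega}$. The plan is to compare $P(\tau)$ to $P(t)$ term by term, exploiting that $P$ has nonnegative coefficients so that $P(t)=\sum_{k}c_{k}e^{\alpha_{k}t}$ is a sum of nonnegative reals and in particular $\left|P(t)\right|=P(t)$.

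First I would factor each term as $c_{k}e^{\alpha_{k}\tau}=c_{k}e^{\alpha_{k}t}e^{\alpha_{k}h}$ and note that $\left|\alpha_{k}h\right|\le\omega\cdot\frac{1}{5\omega}=\frac{1}{5}$ for every $k$. Hence $\left|e^{\alpha_{k}h}-1\right|\le e^{1/5}-1$, and since $e^{1/5}-1<\frac14$ (a direct numerical check) we get the uniform bound $\left|e^{\alpha_{k}h}-1\right|<\frac14$ for all $k$. Writing
\[
P(\tau)-P(t)=\sum_{k=0}^{m}c_{k}e^{\alpha_{k}t}\bigl(e^{\alpha_{k}h}-1\bigr),
\]
the triangle inequality together with $c_{k}e^{\alpha_{k}t}\ge0$ yields
\[
\left|P(\tau)-P(t)\right|\le\bigl(e^{1/5}-1\bigr)\sum_{k=0}^{m}c_{k}e^{\alpha_{k}t}<\tfrac14\,P(t).
\]

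Finally I would conclude by the reverse and forward triangle inequalities: $\left|P(\tau)\right|\ge P(t)-\left|P(\tau)-P(t)\right|>\frac34P(t)$ and $\left|P(\tau)\right|\le P(t)+\left|P(\tau)-P(t)\right|<\frac54P(t)$, which is the claimed two-sided estimate. There is no real obstacle here: the only thing to be careful about is the elementary inequality $e^{1/5}-1<\frac14$, which controls the perturbation uniformly over all exponents, and the observation that nonnegativity of the $c_{k}$ is exactly what turns the term-by-term estimate into a bound by $P(t)$ itself rather than by $\sum_{k}\left|c_{k}e^{\alpha_{k}t}\right|$ (which here coincide, but the sign structure is what makes the argument clean).
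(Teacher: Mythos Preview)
Your proof is correct and follows essentially the same approach as the paper: both arguments bound $\left|P(\tau)-P(t)\right|\le\frac14 P(t)$ term by term using the nonnegativity of the $c_k$, then conclude by the triangle inequality. The only difference is cosmetic---the paper passes through the derivative bound $\left|P'(\tau)\right|\le\omega e^{\omega\left|\tau-t\right|}P(t)$ and integrates, arriving at $\frac15 e^{1/5}<\frac14$, whereas you bound $\left|e^{\alpha_k h}-1\right|\le e^{1/5}-1<\frac14$ directly, which is slightly sharper and more direct.
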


\begin{proof}
Suppose $P\left(z\right)=\sum_{k=0}^{m}c_{k}e^{\alpha_{k}z}$ with
$\max_{k}\left|\alpha_{k}\right|\le\omega$, and $c_{k}\ge0$. Then
\begin{align*}
\left|P^{\prime}\left(\tau\right)\right| & =\left|\sum_{k=0}^{m}c_{k}\alpha_{k}e^{\alpha_{k}\tau}\right|=\left|\sum_{k=0}^{m}c_{k}e^{\alpha_{k}t}\alpha_{k}e^{\alpha_{k}\left(\tau-t\right)}\right|\le\max_{k}\left|\alpha_{k}e^{\alpha_{k}\left(\Re{\tau}-t\right)}\right|\cdot P\left(t\right)\\
 & \le\max_{k}\left|\alpha_{k}\right|e^{\left|\alpha_{k}\right|\left|\tau-t\right|}\cdot P\left(t\right)\le\omega e^{\omega\left|\tau-t\right|}P\left(t\right).
\end{align*}
Now,
\[
\left|P\left(\tau\right)-P\left(t\right)\right|=\left|\int_{t}^{\tau}P^{\prime}\left(s\right)\,\dd s\right|\le\omega e^{\omega\left|\tau-t\right|}P\left(t\right)\cdot\left|\tau-t\right|\le\frac{1}{5}e^{1/5}P\left(t\right)\le\frac{1}{4}P\left(t\right).
\]
\end{proof}

\subsubsection{Proof of second part of Theorem \ref{thm:lower_bound}}

The proof of Theorem \ref{thm:lower_bound} in the case $b\uparrow\infty$
follows by combining the next proposition with Corollary \ref{cor:lower_bnd_var}.
We will use the Schwarz integral formula (see \cite[Chapter 4, Section 6.3]{ahlforsComplex})\emph{,
if $g$ is an analytic function in the closed unit disk, then
\begin{equation}
g\left(z\right)=\frac{1}{2\pi i}\int_{\bbt}\frac{\zeta+z}{\zeta-z}\text{\ensuremath{\Re{g\left(\zeta\right)}\,}\ensuremath{\ensuremath{\frac{\dd\ensuremath{\zeta}}{\zeta}}+i\ensuremath{\Im{g\left(0\right)}}}},\label{eq:Schwarz_int_form}
\end{equation}
for all $\left|z\right|<1$, where $\bbt$ is the unit circle.}
\begin{prop}[{cf. \cite[Theorem 10]{hayman1974local}}]
\label{prop:rosenbloom_local_adm} If $B\left(t\right)\uparrow\infty$,
then $H$ is\emph{ local} \emph{$\delta$-admissible} on $\cX$, with
$\delta\left(t\right)=\frac{1}{\sqrt{B\left(t\right)}}$, that is,
for any $\veps>0$ there exists $\eta>0$, such that for $t\in\cX\cap\left(t_{0}\left(\veps\right),t_{G}\right)$
and $\left|w\right|\leq\frac{\eta}{\sqrt{B\left(t\right)}}$ we have
\[
\log\frac{H\left(t+w\right)}{H\left(t\right)}=wA\left(t\right)+\frac{1}{2}w^{2}B\left(t\right)+h_{t}\left(w\right),\quad\mbox{where }\left|h_{t}\left(w\right)\right|\le\veps\left|w\right|^{2}B\left(t\right).
\]
\end{prop}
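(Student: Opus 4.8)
The plan is to work with the tilted function $\widehat H(w):=H(t+w)e^{-A(t)w}$ and Rosenbloom's variable $X_t$. Fixing $t\in\cX$, so that $t\in\longint\ell$ for some large $\ell$, and putting $Y_t:=X_t-A(t)$ (mean $0$, variance $B(t)$), one has
\[
\frac{\widehat H(w)}{H(t)}=\sum_n\pr{X_t=n}\,e^{(n-A(t))w}=\ex{e^{Y_tw}},\qquad \log\frac{H(t+w)}{H(t)}-wA(t)=\log\frac{\widehat H(w)}{H(t)},
\]
with $\widehat H(0)=H(t)$, $\widehat H'(0)=0$, $(\log\widehat H)''(0)=B(t)$. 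Thus it suffices to show that $h_t(w):=\log\ex{e^{Y_tw}}-\tfrac12 w^2B(t)$, which vanishes to order at least $3$ at the origin, satisfies $|h_t(w)|\le\veps|w|^2B(t)$ for $|w|\le\eta/\sqrt{B(t)}$.

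The core estimate is that $\ex{e^{Y_tw}}$ is uniformly close to $e^{\frac12 w^2B(t)}$ on a disk of radius $\asymp 1/\sqrt{B(t)}$. Since $B$ is non-decreasing, (\ref{eq:gen_func_est}) and (\ref{eq:bound_B_in_T_ell}) give $\ex{e^{\lambda Y_t}}\le e^{2\lambda^2B(t)}$ for real $|\lambda|\le 2\ell^{-2}$, and as $2\ell^{-2}=2B(t_\ell)^{-1/3}\ge 2B(t)^{-1/3}$ this holds whenever $|\lambda|\le 2B(t)^{-1/3}$; Chernoff's inequality then yields the sub-Gaussian tail $\pr{|Y_t|>u}\le 2e^{-u^2/(8B(t))}$ for $u\le 8B(t)^{2/3}$ (alternatively one may invoke Lemma \ref{lem:poly_approx}). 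I would fix a large constant $s$, set $\eta=\epsilon_0/(2s)$ with $\epsilon_0$ a small constant, and split $\ex{e^{Y_tw}}$ over $\{|Y_t|\le s\sqrt{B(t)}\}$ and its complement. Cauchy--Schwarz together with the moment generating function bound shows the complementary part has modulus $\le 2e^{-s^2/16}$; on the main event $|Y_tw|\le s\sqrt{B(t)}\,|w|\le\epsilon_0$, so expanding $e^{Y_tw}$ to third order and feeding in the truncated moments furnished by the tail bound, namely $\pr{|Y_t|>s\sqrt{B(t)}}=O(e^{-s^2/8})$, $\ex{Y_t\mathbf 1_{|Y_t|>s\sqrt{B(t)}}}=O(\sqrt{B(t)}\,e^{-s^2/16})$, $\ex{Y_t^2\mathbf 1_{|Y_t|>s\sqrt{B(t)}}}=O(s^2B(t)e^{-s^2/8})$, and the crude $\ex{|Y_t|^3\mathbf 1_{|Y_t|\le s\sqrt{B(t)}}}\le sB(t)^{3/2}$, and comparing with $e^{\frac12 w^2B(t)}=1+\tfrac12 w^2B(t)+O(|w|^4B(t)^2)$, one gets $\bigl|\ex{e^{Y_tw}}-e^{\frac12 w^2B(t)}\bigr|\le\veps'\eta^2$ on $|w|\le 2\eta/\sqrt{B(t)}$, with $\veps'$ as small as we like after choosing $\epsilon_0$ and then $1/s$ small. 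In particular $\widehat H$ is zero-free on that disk (this also follows from Lemma \ref{lem:exp_poly_near_max} applied to the central exponential polynomial $\sum_{|k-A(t)|\le s\sqrt{B(t)}}a_k^2e^{kt}e^{(k-A(t))w}$, of width $s\sqrt{B(t)}$ and non-negative coefficients, combined with Lemma \ref{lem:poly_approx}), so $\phi(w):=\log\frac{H(t+w)}{H(t)}-wA(t)$ is analytic there and $|h_t(w)|\le 4\veps'\eta^2$ on $|w|=R:=2\eta/\sqrt{B(t)}$.

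To conclude I would use that $h_t$ vanishes to order at least $3$: either directly, observing $h_t(w)/w^3$ is analytic on $|w|\le R$ and invoking the maximum modulus principle, or --- as the excerpt hints --- by substituting $\Re\phi$ on $|w|=R$ into the Schwarz integral formula (\ref{eq:Schwarz_int_form}) to read off the Taylor coefficients of $\phi$ (the first three being $0$, $0$, $\tfrac12 B(t)$) and bounding the rest by $\max_{|w|=R}|\Re\phi|$. Either way $|h_t(w)|\le 4\veps'\eta^2(|w|/R)^3$, so for $|w|\le\eta/\sqrt{B(t)}=R/2$,
\[
|h_t(w)|\le\frac{\veps'B(t)^{3/2}}{2\eta}\,|w|^3\le\frac{\veps'}{2}\,|w|^2B(t)\le\veps|w|^2B(t)
\]
once $\veps'\le 2\veps$; choosing $t_0(\veps)$ so that $B(t)$ is large on $\cX\cap(t_0(\veps),\infty)$ validates the earlier largeness requirements.

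The step I expect to be the main obstacle is the estimate $\ex{Y_t^2\mathbf 1_{|Y_t|>s\sqrt{B(t)}}}=O(s^2B(t)e^{-s^2/8})$: a priori the variance of $Y_t$ could be carried almost entirely by its tail, and excluding this is precisely where the slow growth of $B$ built into $\cX$ (through $B(t_\ell)=\ell^6$ and the interiors $\longint\ell$) enters, via the sub-Gaussian tail bound. A second, more bookkeeping, subtlety is the cubic term $\tfrac16 w^3\ex{Y_t^3}$: its coefficient is only $O(B(t)^{3/2})$, but on the relevant scale $|w|^3\asymp\eta^3/B(t)^{3/2}$, so it contributes $O(\eta^3)=O(\eta\cdot\eta^2)$, which is $\ll\veps\eta^2$ exactly because $\eta$ is taken small relative to $\veps$.
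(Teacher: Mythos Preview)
Your approach is correct and shares the probabilistic setup (Rosenbloom's $X_t$ and the truncation behind Lemma~\ref{lem:poly_approx}) with the paper, but the analytic endgame differs. The paper never attempts a precise comparison of $\ex{e^{Y_tw}}$ with $e^{\frac12 w^2B(t)}$. It works instead on a disk of \emph{fixed} radius $\lambda_0=\tfrac{1}{45\sqrt{B(t)}}$ (independent of $\veps$), proves the crude two-sided bound $\tfrac18 H(t)\le|H(t+w)|\le 2H(t)$ there by combining Lemmas~\ref{lem:poly_approx} and~\ref{lem:exp_poly_near_max} (this is exactly your parenthetical alternative for non-vanishing of $\widehat H$), feeds $|\Re{\phi}|\le\log 8$ into the Schwarz formula~(\ref{eq:Schwarz_int_form}) to obtain $|\phi|\le 7$ on the half-disk, and finally bounds $\sum_{n\ge3}\phi_n w^n$ via Cauchy's estimates; the $\veps$-smallness comes purely from the geometric factor $(|w|/\lambda_0)^3$. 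This sidesteps the truncated-moment bookkeeping entirely---in particular the second-moment tail estimate you flag as the main obstacle is never needed. Your route trades that economy for a more transparent probabilistic picture (essentially that $Y_t/\sqrt{B(t)}$ has near-Gaussian moment generating function on the relevant scale) and dispenses with the Schwarz formula. One small caveat: your sub-Gaussian bound is stated only for $u\le 8B(t)^{2/3}$, so justifying $\ex{Y_t^2\mathbf 1_{|Y_t|>s\sqrt{B(t)}}}=O\bigl(s^2B(t)e^{-s^2/8}\bigr)$ still requires a separate bound on the far tail $|Y_t|>8B(t)^{2/3}$; this is harmless (the MGF bound at the endpoint $\lambda=2B(t)^{-1/3}$ gives an exponential tail making that contribution $O\bigl(B(t)e^{-cB(t)^{1/3}}\bigr)$), but it is an extra step your outline suppresses.
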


\begin{proof}
Let $\veps>0$ be given, and $\eta>0$ depending on $\veps$ to be
chosen later, also let $t\in\cX$. Put
\[
Q\left(\tau\right)\eqdef\sum_{\left|k-A\left(t\right)\right|\le s\sqrt{B\left(t\right)}}a_{k}^{2}e^{k\text{\ensuremath{\tau}}}=H\left(\tau\right)-E\left(\tau\right),
\]
then by Lemma \ref{lem:poly_approx} for $\left|\tau-t\right|<\frac{1}{\sqrt{B\left(t\right)}}$
we have $\left|E\left(\tau\right)\right|\le2H\left(\Re{\tau}\right)\exp\left(-\frac{1}{8}\left(s-4\right)^{2}\right)$
for all $4<\left|s\right|<B^{1/6}\left(t\right)$. We also write $P\left(\tau\right)=e^{A\left(t\right)\tau}Q\left(\tau\right)$
so that $P$ is an exponential polynomial with non-negative coefficients
of width at most $s\sqrt{B\left(t\right)}$. Choosing $s=9$ so that
$2\exp\left(-\frac{1}{8}\left(s-4\right)^{2}\right)<\frac{1}{4}$,
we have
\begin{equation}
\left|E\left(\tau\right)\right|<\frac{1}{4}H\left(\Re{\tau}\right),\label{eq:E_tau_bound}
\end{equation}
and in particular
\begin{equation}
\frac{3}{4}H\left(\Re{\tau}\right)<Q\left(\Re{\tau}\right)=H\left(\Re{\tau}\right)-E\left(\Re{\tau}\right)<\frac{5}{4}H\left(\Re{\tau}\right).\label{eq:Q_re_tau_bounds}
\end{equation}
By Lemma \ref{lem:exp_poly_near_max} and the previous inequality
for $\Re{\tau}=t$, we have for $\left|\tau-t\right|\le\frac{1}{45\sqrt{B\left(t\right)}}$
\begin{equation}
\left(\frac{3}{4}\right)^{2}H\left(t\right)\le\frac{3}{4}Q\left(t\right)\le\left|Q\left(\tau\right)\right|\le\frac{5}{4}Q\left(t\right)\le\frac{5}{4}H\left(t\right).\label{eq:Q_tau_bounds}
\end{equation}
From (\ref{eq:E_tau_bound}), (\ref{eq:Q_re_tau_bounds}), and (\ref{eq:Q_tau_bounds}),
we get
\[
\left|E\left(\tau\right)\right|\le\frac{1}{4}\cdot\frac{4}{3}Q\left(\Re{\tau}\right)\le\frac{1}{3}\cdot\frac{5}{4}H\left(t\right).
\]
Thus, we have

\[
\left|H\left(\tau\right)\right|\le\left|Q\left(\tau\right)\right|+\left|E\left(\tau\right)\right|\le\left[\frac{5}{4}+\frac{1}{3}\cdot\frac{5}{4}\right]H\left(t\right)\le2H\left(t\right),
\]
and
\[
\left|H\left(\tau\right)\right|\ge\left|Q\left(\tau\right)\right|-\left|E\left(\tau\right)\right|\ge\left[\left(\frac{3}{4}\right)^{2}-\frac{1}{3}\cdot\frac{5}{4}\right]H\left(t\right)\ge\frac{1}{8}H\left(t\right).
\]
We found that
\begin{equation}
-\log8\le\log\frac{\left|H\left(\tau\right)\right|}{H\left(t\right)}\le\log2.\label{eq:real_part_bound}
\end{equation}

Now let us define the analytic function
\[
\phi\left(w\right)=\log H\left(t+w\right)-\log H\left(t\right),\quad\phi\left(w\right)=\sum_{n=1}^{\infty}\phi_{n}w^{n},\qquad\left|w\right|\le\frac{1}{45\sqrt{B\left(t\right)}}\defeq\lambda_{0}.
\]
By (\ref{eq:real_part_bound}) we have that $\left|\Re{\phi\left(w\right)}\right|\le\log8$,
and therefore by (\ref{eq:Schwarz_int_form}) for $\left|w\right|\le\frac{1}{2}\lambda_{0}$
we get (since $\phi\left(0\right)=0$)
\[
\left|\phi\left(w\right)\right|\le\frac{1+\frac{1}{2}}{1-\frac{1}{2}}\cdot\log8<7.
\]
By Cauchy's estimates $\left|\phi_{n}\right|\le7\left(\frac{2}{\lambda_{0}}\right)^{n}$,
and therefore, for $\left|w\right|\le\frac{1}{4}\lambda_{0}$
\begin{align*}
\left|\phi\left(w\right)-wA\left(t\right)-\frac{1}{2}w^{2}B\left(t\right)\right| & =\left|\sum_{n=3}^{\infty}\phi_{n}w^{n}\right|\\
 & \le7\sum_{n=3}^{\infty}\left(\frac{2w}{\lambda_{0}}\right)^{n}\le7\cdot8\left(\frac{w}{\lambda_{0}}\right)^{3}\sum_{n=0}^{\infty}\frac{1}{2^{n}}=112\left(\frac{w}{\lambda_{0}}\right)^{3}.
\end{align*}
Thus in order to obtain the result it remains to choose $\eta=\frac{\veps}{112\cdot45^{3}}$.
\end{proof}

\subsubsection{\label{subsec:local_admis_no_assump} Proof of first part of Theorem
\ref{thm:lower_bound}}

Fix an entire function $G(z)=\sum_{n\geq0}a_{n}^{2}z^{n}$, with non-negative
coefficients $a_{n}$, and recall that
\[
a\left(r\right)=r\frac{G^{\prime}\left(r\right)}{G\left(r\right)},\quad b\left(r\right)=ra^{\prime}\left(r\right).
\]
In order to get a lower bound for the variance without any assumptions
on the function $b\left(r\right)$ we will use some results about
$G$ obtained by the Wiman-Valiron method (see \cite{hayman1974local}).
We recall some of the terminology regarding entire functions: $\mu\left(r\right)=\max_{n}\left\{ a_{n}^{2}r^{n}\right\} $
is called the \emph{maximal term} of $G$, and $N\left(r\right)=\max\left\{ n:a_{n}^{2}r^{n}=\mu\left(r\right)\right\} $
the \emph{central index}.

One of the main results of the Wiman-Valiron method is that there
is a set $\cN\subset\bbr^{+}$, such that $G\left(z\right)$ has desirable
properties if $\left|z\right|\in\cN$, and that $\bbr^{+}\backslash\cN$
has finite logarithmic measure (see \cite[Sections 2 and 3]{hayman1974local}).
We fix a parameter $\gamma\in\left(0,\tfrac{1}{2}\right)$; the set
$\cN$ will depend on $G$ and $\gamma$. By \cite[Theorem 2]{hayman1974local}
for $r\in\cN$ we have for all $n\in\bbn$
\[
a_{n}^{2}r^{n}\le\mu\left(r\right)\exp\left(-\frac{k^{2}}{\left(\left|k\right|+N\left(r\right)\right)^{1+\gamma}}\right),\quad\mbox{where }k=n-N\left(r\right),
\]
hence the summands $a_{n}^{2}\left|z\right|^{n}$ of the series $G\left(z\right)$
with $\left|z\right|=r$, corresponding to the ``window'' of indices

\begin{equation}
\left\{ n\in\bbn:\left|n-N\left(r\right)\right|\le K\left(r\right)\right\} \quad\text{with }K\left(r\right)\eqdef N\left(r\right)^{\frac{1+\gamma}{2}},\label{eq:K(r)_def}
\end{equation}
are the largest ones. In particular, this implies that $N\left(r\right)$
and $a\left(r\right)$ are asymptotically comparable, see Claim \ref{clm:approx_N_by_a}
below. Notice that by applying the change of variables $x\mapsto\sqrt{x}$,
the set $\left\{ r>0:r^{2}\notin\cN\right\} $ is of finite logarithmic
measure as well.

It is known that for any $\gamma>0$ we have
\begin{equation}
b\left(r\right)\le a\left(r\right)^{1+\gamma}\label{eq:b_bound_in_a}
\end{equation}
outside a set of finite logarithmic measure (see \cite[Lemma 1]{rosenbloom1962probability}).
Thus, we may and will assume that (\ref{eq:b_bound_in_a}) is satisfied
for all $r\in\cN$.
\begin{claim}
\label{clm:approx_N_by_a} For $r\in\cN$ we have
\[
a\left(r\right)=N\left(r\right)+O\left(K\left(r\right)\right),\quad\mbox{as }r\to\infty.
\]
\end{claim}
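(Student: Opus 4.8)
The plan is to exploit the fact that, for $r\in\cN$, the series $G(r)=\sum_n a_n^2 r^n$ is dominated by the ``window'' of indices near the central index $N(r)$, as quantified by \cite[Theorem 2]{hayman1974local}. Recall $a(r)=r G'(r)/G(r)=\sum_n n\,a_n^2 r^n/G(r)$, so $a(r)$ is exactly the expectation of the integer-valued random variable $X$ with $\pr{X=n}=a_n^2 r^n/G(r)$ (this is Rosenbloom's random variable $X_t$ at $t=\log r$). Thus $a(r)=\ex{X}$, and I want to show $\ex{X}=N(r)+O(K(r))$. First I would split $\ex{X}=\ex{(X-N(r))}+N(r)$, so it suffices to bound $\bigl|\ex{X-N(r)}\bigr|$ by $O(K(r))$, and for this it suffices to bound $\ex{|X-N(r)|}$.

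Next I would break the sum defining $\ex{|X-N(r)|}$ into the window $W=\{n:|n-N(r)|\le K(r)\}$ and its complement. On the window, trivially $|n-N(r)|\le K(r)$, so that part contributes at most $K(r)\cdot\pr{X\in W}\le K(r)$. For the tail $n\notin W$, write $k=n-N(r)$ with $|k|>K(r)=N(r)^{(1+\gamma)/2}$, and use the Wiman--Valiron bound $a_n^2 r^n\le \mu(r)\exp\!\bigl(-k^2/(|k|+N(r))^{1+\gamma}\bigr)$. When $|k|\le N(r)$ one has $(|k|+N(r))^{1+\gamma}\le (2N(r))^{1+\gamma}$, so the exponent is $\le -k^2/(2N(r))^{1+\gamma}$, giving Gaussian-type decay in $k/K(r)$; summing $|k|\,\exp(-c k^2/K(r)^2)$ over $k$ yields $O(K(r)^2\cdot\mu(r)/G(r))$, and since $\mu(r)\le G(r)$ this is $O(K(r)^2)$ — wait, I need $O(K(r))$, so I should be more careful: actually the relevant comparison is $\mu(r)/G(r)$ times the Gaussian sum, but since $G(r)\ge$ (a window's worth of terms each comparable to $\mu(r)$ up to the exponential factor), one in fact has $\mu(r)/G(r)=O(1/\sqrt{b(r)})$ or at worst $O(1)$; in either case, using $G(r)\ge \mu(r)$ crudely already gives tail contribution $O(K(r)^2\mu(r)/G(r))=O(K(r)^2)$, which is too weak. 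The fix is to use instead that $\pr{|X-N(r)|>j}$ decays faster than any power, so that $\sum_{|k|>K(r)}|k|\pr{X=N(r)+k}$ telescopes against $\sum_j \pr{|X-N(r)|>j}$ and, by the super-polynomial decay from \cite[Theorem 2]{hayman1974local} for $K(r)<|k|\le N(r)$ and direct crude bounds for $|k|>N(r)$, is in fact $o(K(r))$. I would organize this as: $\ex{|X-N(r)|}\le K(r)+\sum_{j\ge K(r)}\pr{|X-N(r)|>j}$, and estimate the sum using the pointwise bound, obtaining $O(K(r))$.

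The step I expect to be the main obstacle is controlling the far tail $|k|>N(r)$, since the Wiman--Valiron estimate as quoted has denominator $(|k|+N(r))^{1+\gamma}$, which for large $|k|$ makes the exponent $-k^2/(|k|+N(r))^{1+\gamma}\approx -|k|^{1-\gamma}$ — still decaying, but only stretched-exponentially, so I must check this is summable against $|k|$ and contributes $O(1)$, hence negligible compared to $K(r)$; this uses $N(r)\to\infty$ (equivalently $G$ transcendental). A secondary technical point is that \cite[Theorem 2]{hayman1974local} is stated for $r\in\cN$ and $N(r)$ large, so the claim is understood as $r\to\infty$ with $r\in\cN$, and finitely many small values of $r$ are harmless. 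Once $\ex{|X-N(r)|}=O(K(r))$ is established, the claim $a(r)=\ex{X}=N(r)+O(K(r))$ follows immediately, and I would remark that combined with $K(r)=N(r)^{(1+\gamma)/2}=o(N(r))$ this also yields $a(r)\sim N(r)$, which is the form used later.
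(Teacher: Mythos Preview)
Your approach via Rosenbloom's random variable is natural, but it does not reach the claimed $O(K(r))$; the gap is exactly where you hesitated. From the pointwise bound $a_{n}^{2}r^{n}\le\mu(r)\exp\bigl(-k^{2}/(|k|+N)^{1+\gamma}\bigr)$ of \cite[Theorem~2]{hayman1974local} together with the trivial $\mu(r)\le G(r)$ one gets, for $j\ge K$,
\[
\pr{|X-N|>j}\;\le\;\frac{\mu(r)}{G(r)}\sum_{|k|>j}e^{-c\,k^{2}/K^{2}}\;\lesssim\;\frac{\mu(r)}{G(r)}\,K\,e^{-c\,j^{2}/K^{2}},
\]
and hence
\[
\sum_{j\ge K}\pr{|X-N|>j}\;\lesssim\;\frac{\mu(r)}{G(r)}\,K\sum_{j\ge K}e^{-c\,j^{2}/K^{2}}\;\lesssim\;\frac{\mu(r)}{G(r)}\,K^{2}.
\]
Wiman--Valiron provides no lower bound on $G(r)/\mu(r)$ beyond $G(r)\ge\mu(r)$, so this yields only $\ex{|X-N|}=O(K^{2})$. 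The ``super-polynomial decay'' you invoke is decay in the variable $j/K$; summing over $j$ restores a full factor of $K$. Being more careful and replacing the Gaussian bound by the trivial bound $\pr{\cdot}\le1$ where the former exceeds $1$ improves this to $O\bigl(K\sqrt{\log K}\bigr)$, and Cauchy--Schwarz against the second moment gives $O(K^{3/2})$, but neither reaches $O(K)$. The missing ingredient is control of the \emph{derivative} of the tail, not just its size.

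The paper takes a different and much shorter route: since the coefficients are non-negative, $r$ is a point of maximum modulus for $G$, so \cite[Theorem~12]{hayman1974local} applies with $q=1$ and gives
\[
\frac{r}{N(r)}\,G'(r)=G(r)\Bigl(1+O\bigl(K(r)/N(r)\bigr)\Bigr),
\]
whence $a(r)=rG'(r)/G(r)=N(r)+O(K(r))$ immediately. Theorem~12 is a genuinely stronger input than Theorem~2: its proof approximates $G$ near $|z|=r$ by the window polynomial and bounds the derivative of the remainder via Cauchy's estimate on a disk of radius comparable to $1/K$, which is precisely what supplies the factor you are missing.
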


\begin{proof}
Since $a_{n}^{2}\ge0$ we have that $\max_{\left|z\right|=r}\left|G\left(z\right)\right|=G\left(r\right)$
and hence, by \cite[Theorem 12]{hayman1974local} with $q=1,f=G,k=K\left(r\right)$
we have
\[
\frac{r}{N\left(r\right)}G^{\prime}\left(r\right)=G\left(r\right)+O\left(\frac{K\left(r\right)}{N\left(r\right)}\right)G\left(r\right).
\]
and therefore
\[
a\left(r\right)=N\left(r\right)+O\left(K\left(r\right)\right).
\]
\end{proof}
Fixing $r\in\cN$ we can approximate the function $G$ near $\left|z\right|=r$
by a polynomial of degree about $K\left(r\right)$ (cf. Lemma \ref{lem:poly_approx}).
This allows us to obtain rather precise Taylor expansion asymptotics
for $\log G\left(e^{\tau}\right)$ near $\tau=2\log r$.

The following lemma is a special case of \cite[Lemma 2]{hayman1974local}.
\begin{lem}
Suppose $e^{t}=r^{2}\in\cN$ and $K,N$ are as above, then for $\left|\tau-t\right|\le\frac{2}{K\left(e^{t}\right)}$
\[
\left|\sum_{\left|k-N\left(e^{t}\right)\right|>K\left(e^{t}\right)}a_{k}^{2}e^{k\text{\ensuremath{\tau}}}\right|\le\frac{1}{4}H\left(\Re{\tau}\right).
\]
\end{lem}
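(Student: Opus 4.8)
The statement is essentially the real-variable heart of Hayman's Wiman--Valiron machinery, specialized to the exponential change of variables $z \mapsto e^z$ applied to a function with nonnegative coefficients. The plan is to reduce to the real-axis case and then run a Bernstein/Markov-type tail estimate against the ``central window'' of indices. First I would note that $|a_k^2 e^{k\tau}| = a_k^2 e^{k\Re\tau}$, so by the triangle inequality it suffices to bound $\sum_{|k-N(e^t)|>K(e^t)} a_k^2 e^{ky}$ for real $y$ with $|y-t|\le 2/K(e^t)$; here I can appeal verbatim to the reduction already used in the proof of Lemma~\ref{lem:poly_approx}.

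**Key steps.** With $r=e^{y/2}$ (so $r^2=e^y$ lies near the good point $e^t\in\cN$), I would write the tail sum as $G(r^2)\,\bbp[\,|X'-N|>K\,]$ where $X'$ is the random index with $\bbp[X'=k]=a_k^2 e^{ky}/H(y)$, exactly the Rosenbloom-type variable but now centered at $N=N(e^t)$ rather than $A(y)$. The point is that $N$ and $A$ differ by only $O(K)$ by Claim~\ref{clm:approx_N_by_a}, so a tail bound past $K(e^t) = N^{(1+\gamma)/2}$ around $N$ is, up to harmless constants, a tail bound a fixed multiple of standard deviations past the mean (recall $B(y)$ is comparable to $N^{1+\gamma}$ near such points, using \eqref{eq:b_bound_in_a} and $B=A'$). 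Then I would invoke \cite[Theorem 2]{hayman1974local}: for $r^2\in\cN$ each coefficient obeys $a_n^2 r^{2n}\le\mu(r^2)\exp(-k^2/(|k|+N)^{1+\gamma})$ with $k=n-N$. Summing this Gaussian-type bound over $|k|>K$, using $\mu(r^2)\le G(r^2)=H(y)$ and that on $|k|>K=N^{(1+\gamma)/2}$ the exponent is $\le -c\,k^2/N^{1+\gamma}$ for $|k|\lesssim N$ (and even stronger for larger $|k|$), I get a sum of a convergent Gaussian series whose total is $\le \tfrac14 H(y)$ once $N$ (equivalently $t$) is large enough; the slight perturbation $|y-t|\le 2/K$ only changes $H(y)$ versus $H(t)$ and $N(e^y)$ versus $N(e^t)$ by controlled factors, which I would absorb by shrinking the constant. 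Finally, undoing $y=\Re\tau$ gives the claimed bound $\le\tfrac14 H(\Re\tau)$.

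**Main obstacle.** The delicate point is not the Gaussian summation — that is routine once the per-coefficient bound is in hand — but rather bookkeeping the two separate shifts: (i) the shift $|y-t|\le 2/K(e^t)$ away from the distinguished point, and (ii) the discrepancy between the ``true'' center $N(e^t)$ used to cut the window and the analytic center $A(y)$ (or $A(t)$) that the generating-function estimate \eqref{eq:gen_func_est}-type argument naturally produces. One must check that moving $y$ by $2/K$ changes $N(e^y)$ by at most, say, $\tfrac12 K$ (so the window around $N(e^t)$ still captures everything within $\tfrac12 K$ of the ``local'' central index), and that $K(e^y)\asymp K(e^t)$ on this range; both follow from the slow variation of $N(r)$ and the fact that $\cN$ was built precisely so these quantities behave well. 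Since the present lemma is stated as ``a special case of \cite[Lemma 2]{hayman1974local},'' the cleanest route is in fact to cite that result directly after verifying its hypotheses in our notation (nonnegative coefficients, $r^2\in\cN$, $\gamma\in(0,\tfrac12)$ fixed, $K=N^{(1+\gamma)/2}$), rather than reprove the Gaussian tail from scratch; I would present the short verification and the appeal, and only sketch the self-contained summation as a remark.
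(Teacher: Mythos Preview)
Your proposal is correct and ultimately takes the same approach as the paper: the paper gives no proof of its own and simply records the lemma as ``a special case of \cite[Lemma 2]{hayman1974local},'' which is exactly the route you recommend at the end. Your additional self-contained sketch via \cite[Theorem 2]{hayman1974local} and Gaussian tail summation is more than the paper provides, and the bookkeeping obstacle you flag (the shift $|y-t|\le 2/K$ together with the discrepancy $N$ versus $A$) is indeed the only nontrivial point, handled in Hayman's original argument.
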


Combining the lemma above with Claim \ref{clm:approx_N_by_a}, we
find that we can replace $N$ by $A$.
\begin{cor}
There exists a constant $s>0$, so that for $e^{t}=r^{2}\in\cN$ and
$\left|\tau-t\right|\le\frac{2}{K\left(e^{t}\right)}$,
\[
\left|\sum_{\left|k-A\left(t\right)\right|>sK\left(e^{t}\right)}a_{k}^{2}e^{k\text{\ensuremath{\tau}}}\right|\le\frac{1}{4}H\left(\Re{\tau}\right).
\]
\end{cor}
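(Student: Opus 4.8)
The plan is to derive the corollary as an immediate consequence of the preceding lemma together with Claim~\ref{clm:approx_N_by_a}; the only difference between the two statements is that the summation ``window'' is now centered at $A(t)$ rather than at the central index $N(e^{t})$, and the gap between the two centers is absorbed by enlarging the constant $s$.

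Concretely, I would first apply Claim~\ref{clm:approx_N_by_a} with the radius $r^{2}=e^{t}\in\cN$, which gives $A(t)=a(e^{t})=N(e^{t})+O(K(e^{t}))$ as $e^{t}\to\infty$; hence there is a constant $C>0$, depending only on $G$ and the fixed parameter $\gamma$, with $\left|A(t)-N(e^{t})\right|\le C\,K(e^{t})$ for all sufficiently large $e^{t}\in\cN$. Put $s\eqdef C+1$. If $\left|k-A(t)\right|>s\,K(e^{t})$, then by the reverse triangle inequality
\[
\left|k-N(e^{t})\right|\ge\left|k-A(t)\right|-\left|A(t)-N(e^{t})\right|>(s-C)\,K(e^{t})=K(e^{t}),
\]
so the index set appearing in the corollary is contained in the index set of the lemma. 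Since $a_{k}^{2}\ge0$ and $\left|a_{k}^{2}e^{k\tau}\right|=a_{k}^{2}e^{k\Re{\tau}}$, passing to absolute values and then enlarging the index set gives
\[
\left|\sum_{\left|k-A(t)\right|>s\,K(e^{t})}a_{k}^{2}e^{k\tau}\right|\le\sum_{\left|k-N(e^{t})\right|>K(e^{t})}a_{k}^{2}e^{k\Re{\tau}},
\]
and the right-hand side is at most $\tfrac14 H(\Re{\tau})$ by the preceding lemma applied at the real point $\Re{\tau}$ in place of $\tau$ — legitimate because $\left|\Re{\tau}-t\right|\le\left|\tau-t\right|\le\tfrac{2}{K(e^{t})}$ and $t$ is real — the sum there being a sum of non-negative reals and hence equal to its own modulus.

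There is essentially no obstacle here: the result is a bookkeeping consequence of facts already in hand. The only points requiring a little care are that $s$ may be fixed once and for all (it depends only on $G$ and $\gamma$), that the statement is to be read for $e^{t}\in\cN$ large enough that the asymptotic estimate of Claim~\ref{clm:approx_N_by_a} is in force, and the harmless reduction to the real argument $\Re{\tau}$, which is what lets us both exploit the non-negativity of the coefficients and invoke the lemma.
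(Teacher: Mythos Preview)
Your proposal is correct and is exactly the argument the paper has in mind: the paper's entire proof is the sentence ``Combining the lemma above with Claim~\ref{clm:approx_N_by_a}, we find that we can replace $N$ by $A$,'' and your write-up supplies precisely those details (the constant $s=C+1$ absorbing the $O(K)$ gap, the inclusion of index sets, and the harmless passage to $\Re{\tau}$ to exploit non-negativity before enlarging the sum).
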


Repeating the proof of Proposition \ref{prop:rosenbloom_local_adm}
using the previous corollary instead of Lemma \ref{lem:poly_approx}
we obtain the following result.
\begin{prop}
Any entire function $G$ with non-negative coefficients is local $\delta-$admissible
on a set $\cN=\cN_{G}$ with $\delta\left(t\right)=\frac{B\left(t\right)}{K^{3}\left(e^{t}\right)}$.
Here $\bbr^{+}\backslash\cN$ is a set of finite logarithmic measure.
\end{prop}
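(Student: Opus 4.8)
The plan is to retrace the argument of Proposition \ref{prop:rosenbloom_local_adm}, but with the ``window'' of dominant indices now dictated by the Wiman--Valiron estimates rather than by Rosenbloom's probabilistic bound, and with the width parameter $K(e^t)$ in place of $\sqrt{B(t)}$. Concretely: fix $r\in\cN$ with $e^t=r^2$ and set, as in the Corollary preceding the statement, $P(\tau)\eqdef e^{A(t)\tau}Q(\tau)$ where $Q(\tau)=\sum_{|k-A(t)|\le sK(e^t)}a_k^2e^{k\tau}$ is the truncation to the relevant window and $E(\tau)=H(\tau)-Q(\tau)$ is the tail. By that Corollary, $|E(\tau)|\le\tfrac14 H(\Re\tau)$ for $|\tau-t|\le 2/K(e^t)$, so $P$ is an exponential polynomial with non-negative coefficients of width at most $s K(e^t)+|A(t)|$; however, the shift by $e^{A(t)\tau}$ is chosen precisely so that the \emph{effective} width of $P$ is $O(K(e^t))$ (the exponents of $P$ are $k-A(t)$ with $|k-A(t)|\le sK(e^t)$). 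Apply Lemma \ref{lem:exp_poly_near_max} to $P$ with $\omega\asymp K(e^t)$: for $|\tau-t|\lesssim 1/K(e^t)$ we get $\tfrac34 Q(t)\le|Q(\tau)|\le\tfrac54 Q(t)$, and combining with the tail bound exactly as in Proposition \ref{prop:rosenbloom_local_adm} yields two-sided bounds $-\log 8\le\log\frac{|H(\tau)|}{H(t)}\le\log 2$ on a disk $|\tau-t|\le\lambda_0$ with $\lambda_0\asymp 1/K(e^t)$.

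From here the argument is identical to the end of Proposition \ref{prop:rosenbloom_local_adm}: define $\phi(w)=\log H(t+w)-\log H(t)=\sum_{n\ge1}\phi_n w^n$, use the Schwarz formula \eqref{eq:Schwarz_int_form} together with $|\Re\phi|\le\log 8$ and $\phi(0)=0$ to get $|\phi(w)|\le 7$ on $|w|\le\tfrac12\lambda_0$, then Cauchy's estimates give $|\phi_n|\le 7(2/\lambda_0)^n$, so that for $|w|\le\tfrac14\lambda_0$
\[
\Bigl|\phi(w)-wA(t)-\tfrac12 w^2 B(t)\Bigr|=\Bigl|\sum_{n\ge3}\phi_n w^n\Bigr|\lesssim\Bigl(\tfrac{w}{\lambda_0}\Bigr)^3\lesssim \frac{|w|^3}{\lambda_0^3}\,.
\]
To match the required form $|h_t(w)|\le\veps|w|^2 B(t)$ on $|w|\le\eta\,\delta(t)$ with $\delta(t)=B(t)/K^3(e^t)$, observe that $|w|^3/\lambda_0^3\asymp |w|\cdot |w|^2 K^3(e^t)$, and on the range $|w|\le\eta\,\delta(t)=\eta B(t)/K^3(e^t)$ we have $|w| K^3(e^t)\le\eta B(t)$, whence $|w|^3/\lambda_0^3\lesssim \eta\,|w|^2 B(t)$. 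Choosing $\eta$ small (depending on the implicit constants, hence on $\veps$) gives the bound. One must also check the consistency condition of the definition, namely that $\delta(t)=B(t)/K^3(e^t)\le\lambda_0\asymp 1/K(e^t)$ on $\cN$, i.e. that $B(t)\lesssim K^2(e^t)$; this follows because $K(e^t)\asymp N(e^t)^{(1+\gamma)/2}\asymp A(t)^{(1+\gamma)/2}$ by Claim \ref{clm:approx_N_by_a}, while $B(t)\le A(t)^{1+\gamma}$ holds on $\cN$ by \eqref{eq:b_bound_in_a} with the same $\gamma$, so $B(t)\lesssim K^2(e^t)$ as needed (after possibly shrinking $\gamma$ slightly so the two exponents align, or absorbing the discrepancy into a further logarithmic-measure exceptional set).

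The main obstacle, and the one point deserving care, is bookkeeping the two competing scales: the \emph{analytic} radius $\lambda_0\asymp 1/K(e^t)$ on which $H$ is well-controlled in modulus, versus the \emph{admissibility} radius $\delta(t)=B(t)/K^3(e^t)$ on which the quadratic Taylor approximation is accurate to within $\veps|w|^2B(t)$. The reason $\delta$ is so much smaller than $\lambda_0$ is that the cubic remainder $|\phi_3||w|^3\lesssim|w|^3/\lambda_0^3$ must be dominated by $\veps|w|^2B(t)$, which forces $|w|\lesssim\veps B(t)\lambda_0^3\asymp\veps B(t)/K^3(e^t)$; this is exactly the stated $\delta$. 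One should verify that the resulting $\delta(t)$ is indeed $>0$ and that $t+\delta(t)<t_G=\infty$ is vacuous here, and that the exceptional set $\bbr^+\setminus\cN$ (finite logarithmic measure, as recalled from \cite{hayman1974local} and \cite{rosenbloom1962probability}) absorbs all places where the Wiman--Valiron estimates or \eqref{eq:b_bound_in_a} fail. Everything else is a verbatim transcription of the proof of Proposition \ref{prop:rosenbloom_local_adm} with $\sqrt{B(t)}$ replaced by $K(e^t)$.
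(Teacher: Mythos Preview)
Your proposal is correct and follows exactly the approach the paper intends: the paper's own proof is the single sentence ``Repeating the proof of Proposition~\ref{prop:rosenbloom_local_adm} using the previous corollary instead of Lemma~\ref{lem:poly_approx} we obtain the following result,'' and you have carried out precisely that substitution, including the key bookkeeping that the cubic Taylor remainder $\lesssim |w|^3/\lambda_0^3$ with $\lambda_0\asymp 1/K(e^t)$ forces the admissibility radius $\delta(t)=B(t)/K^3(e^t)$. Your additional check that $\delta(t)\lesssim\lambda_0$ via $B(t)\le A(t)^{1+\gamma}\asymp K^2(e^t)$ on $\cN$ is a detail the paper leaves implicit but is indeed needed.
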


We are now ready to prove the first part of Theorem \ref{thm:lower_bound}.
We recall that $f$ is a Gaussian entire function with covariance
kernel $G$.
\begin{proof}[Proof of Theorem \ref{thm:lower_bound}]
Choose $\gamma$ sufficiently small so that $\frac{3}{2}\left(1+\gamma\right)+\gamma\le\frac{3}{2}+\veps$.
By (\ref{eq:b_bound_in_a}), (\ref{eq:K(r)_def}), and Claim \ref{clm:approx_N_by_a},
we have
\[
\frac{B^{2}\left(t\right)}{K^{3}\left(e^{t}\right)}\le\frac{2B^{2}\left(t\right)}{A\left(t\right)^{\frac{3}{2}\left(1+\gamma\right)}}\le2\sqrt{B\left(t\right)}\quad\text{as }t\to\infty,\quad e^{t}\in\cN,
\]
and therefore by Corollary \ref{cor:lower_bnd_var}, with $e^{t}=r^{2}$,
$H\left(z\right)=G\left(e^{z}\right)$, and the previous proposition
\[
\var{n_{f}\left(r\right)}=\varint Ht{\bbt}\ge\min\left\{ \delta\left(t\right)B\left(t\right),\sqrt{B\left(t\right)}\right\} \gtrsim\frac{B^{2}\left(t\right)}{K^{3+\gamma}\left(e^{t}\right)}\gtrsim\frac{b^{2}\left(r^{2}\right)}{a^{\frac{3}{2}\left(1+\gamma\right)+\gamma}\left(r^{2}\right)}.
\]
 
\end{proof}

\section{Asymptotics of the variance - proof of Theorem~\ref{thm:var_asymp}\label{sec:variance_asymp}}

Let $f$ be a Gaussian analytic function with covariance function
$G$, recall $H\left(t\right)=G\left(e^{t}\right)$, and that
\[
t_{G}=\log R_{G}\in\left\{ 0,\infty\right\} ,\quad A\left(t\right)=\left(\log H\left(t\right)\right)^{\prime}=a\left(e^{t}\right),\quad B\left(t\right)=A^{\prime}\left(t\right)=b\left(e^{t}\right).
\]
In this section we find the asymptotic growth of $\var{n_{f}\left(r\right)}$
as $r\to R_{G}^{-}$ when the covariance kernel $G$ is type I admissible.

\subsection{Type I admissible covariance functions\label{subsec: type I admissible}}

To find precise asymptotics for the integral (\ref{eq:Variance_formula})
we make certain assumptions on the function $H$, motivated by the
Hayman \cite{Hayman} admissibility condition.
\begin{defn}
We call $G$ \emph{type I admissible} if the function $H$ has the
following properties: 
\end{defn}

\begin{enumerate}
\item \label{enu:Hayman_Admis_Assum_growth_B} $B\left(t\right)\to\infty$
as $t\to t_{G}^{-}$.
\item \label{enu:Assump_A_control_by_B} $A\left(t\right)=O\left(B^{2}\left(t\right)\right)$
as $t\to t_{G}^{-}$.
\item \label{enu:Hayman_Admis_Assum_Asymp} There is a constant $C_{G}>2$
such that 
\[
\log\frac{H\left(t+i\theta\right)}{H\left(t\right)}=i\theta A\left(t\right)-\frac{1}{2}\theta^{2}B\left(t\right)\left(1+o\left(1\right)\right),\quad\text{as }t\to t_{G}^{-},
\]
uniformly for all $\left|\theta\right|\le\delta(t):=\sqrt{C_{G}\frac{\log B\left(t\right)}{B\left(t\right)}}$.
\item \label{enu:Hayman_Admis_Assum_decay} $\left|H\left(t+i\theta\right)\right|=O\left(\frac{H\left(t\right)}{B^{2}\left(t\right)}\right)$
and $\left|H^{\prime}\left(t+i\theta\right)\right|=O\left(\frac{H^{\prime}\left(t\right)}{B^{2}\left(t\right)}\right)$
as $t\to t_{G}^{-}$, uniformly in $\left|\theta\right|\in\left[\delta\left(t\right),\pi\right]$.
\end{enumerate}

\begin{rem}
By the proof of \cite[Lemma 4]{Hayman} it follows that an admissible
function is local $\delta$-admissible on $\bbr^{+}$ with $\delta\left(t\right)=\frac{c}{\sqrt{B\left(t\right)}}$
with some constant $c>0$.
\end{rem}

\begin{rem}
Since $B=A^{\prime}=\left(\log H\right)^{\prime\prime}$ it follows
that Assumption \ref{enu:Assump_A_control_by_B} puts a restriction
on the minimal growth rate of the function $H$.
\end{rem}

\begin{rem}
The choice of the constant $C_{G}$ is not important, we choose it
in this way so that Assumptions \ref{enu:Hayman_Admis_Assum_Asymp}
and \ref{enu:Hayman_Admis_Assum_decay} will agree for $\theta=\delta\left(t\right)$.
\end{rem}

\subsection{Asymptotics of the variance}

We put $r^{2}=e^{t}$, and split the integral in (\ref{eq:Variance_formula})
into two parts:
\begin{align*}
\text{Var}\left(n_{f}\left(r\right)\right) & =\varint Ht{\bbt}=\frac{1}{2\pi}\int_{-\pi}^{\pi}\frac{\left|A\left(t+i\theta\right)-A\left(t\right)\right|^{2}}{\exp\left(2\int_{0}^{\theta}\text{Im}\left[A\left(t+i\varphi\right)\right]\dd\varphi\right)-1}\,\dd\theta\\
 & =\varint Ht{\left[-\delta\left(t\right),\delta\left(t\right)\right]}+\varint Ht{\bbt\setminus\left[-\delta\left(t\right),\delta\left(t\right)\right]}\\
 & \defeq J_{1}\left(r\right)+J_{2}\left(r\right),
\end{align*}
where we used the definition of $\cI$ in (\ref{eq:Ical_def}).

\subsubsection{Evaluating $J_{1}$}

For $\left|\theta\right|\le\delta\left(t\right)$, Assumption \ref{enu:Hayman_Admis_Assum_Asymp}
implies
\begin{equation}
\left|A\left(t+i\theta\right)-A(t)\right|=B\left(t\right)\left|\theta\right|\left(1+o\left(1\right)\right),\quad t\to t_{G}^{-},\label{eq: g asym 1}
\end{equation}
uniformly in $\left|\theta\right|\leq\delta\left(t\right)$. Since
$A\left(t\right)\in\bbr$
\begin{align}
\int_{0}^{\theta}\text{Im}\left[A(t+i\varphi)\right]\dd\varphi & =\int_{0}^{\theta}\text{Im}\left[A\left(t\right)+i\varphi B\left(t\right)\left(1+o\left(1\right)\right)\right]\dd\varphi\nonumber \\
 & \sim-\frac{1}{2}\theta^{2}B\left(t\right),\quad t\to t_{G}^{-},\label{eq: g asym 2}
\end{align}
also uniformly in $\left|\theta\right|\leq\delta\left(t\right)$ .
Making the change of variables
\[
u=\text{\ensuremath{\sqrt{-2\int_{0}^{\theta}\text{Im}\left[A(t+i\varphi)\right]\dd\varphi}\sim\theta\sqrt{B\left(t\right)}}}
\]
yields by (\ref{eq: g asym 1}) and (\ref{eq: g asym 2})
\[
J_{1}\left(r\right)\sim\frac{\sqrt{B\left(t\right)}}{2\pi}\int_{\left|u\right|\le\delta\left(t\right)\sqrt{B\left(t\right)\left(1+o\left(1\right)\right)}}\frac{u^{2}}{\exp\left(u^{2}\right)-1}\,\dd u.
\]
By Assumption \ref{enu:Hayman_Admis_Assum_growth_B} and the definition
of $\delta$, we have that $\delta\left(t\right)\sqrt{B\left(t\right)}\to\infty,$
as $t\to t_{G}^{-}.$ Thus 
\begin{align*}
\int_{\left|u\right|\leq\delta\left(t\right)\sqrt{B\left(t\right)\left(1+o\left(1\right)\right)}}\frac{u^{2}}{\exp\left(u^{2}\right)-1}\,\dd u & \sim\int_{\bbr}\frac{u^{2}}{\exp\left(u^{2}\right)-1}\,\dd u=\frac{\sqrt{\pi}}{2}\zeta\left(\tfrac{3}{2}\right),\qquad\text{as }t\to t_{G}^{-},
\end{align*}
where $\zeta$ is the Riemann zeta function. This gives the main term
in the asymptotic behavior of the variance
\begin{equation}
J_{1}\left(r\right)\sim\frac{\zeta\left(\frac{3}{2}\right)}{4\sqrt{\pi}}\sqrt{B\left(t\right)},\quad t\to t_{G}^{-}.\label{eq:J_1_asymp}
\end{equation}

\subsubsection{Bounding $J_{2}$}

Again we write $r^{2}=e^{t}$. The admissibility assumptions allow
us to control the size of $G$ also in the range $\delta\left(t\right)\leq\left|\theta\right|\leq\pi$.
Note the identity
\begin{align*}
\frac{\left|A\left(t+i\theta\right)-A(t)\right|^{2}}{\exp\left(2\int_{0}^{\theta}\text{Im}\left[A(t+i\varphi)\right]\dd\varphi\right)-1} & =\frac{\left|H\left(t\right)H^{\prime}\left(t+i\theta\right)-H\left(t+i\theta\right)H^{\prime}\left(t\right)\right|^{2}}{H^{2}\left(t\right)\left(H^{2}\left(t\right)-\left|H^{2}\left(t+i\theta\right)\right|\right)}.
\end{align*}
By Assumptions \ref{enu:Hayman_Admis_Assum_growth_B} and \ref{enu:Hayman_Admis_Assum_decay},
we have that $H^{2}\left(t\right)-\left|H^{2}\left(t+i\theta\right)\right|\sim H^{2}\left(t\right)$.
Therefore, again by Assumption \ref{enu:Hayman_Admis_Assum_decay}
we get 
\begin{align*}
\frac{\left|A\left(t+i\theta\right)-A(t)\right|^{2}}{\exp\left(2\int_{0}^{\theta}\text{Im}\left[A(t+i\varphi)\right]\dd\varphi\right)-1} & \le\left(2+o\left(1\right)\right)\left[\frac{\left|H\left(t\right)H^{\prime}\left(t+i\theta\right)\right|^{2}}{H^{4}\left(t\right)}+\frac{\left|H\left(t+i\theta\right)H^{\prime}\left(t\right)\right|^{2}}{H^{4}\left(t\right)}\right]\\
 & =\left(2+o\left(1\right)\right)\left[\frac{\left|H^{\prime}\left(t+i\theta\right)\right|^{2}}{H^{2}\left(t\right)}+\frac{\left|H\left(t+i\theta\right)\right|^{2}}{H^{2}\left(t\right)}A^{2}\left(t\right)\right]\\
 & =O\left(\frac{A^{2}\left(t\right)}{B^{4}\left(t\right)}\right).
\end{align*}
Finally, by Assumption \ref{enu:Assump_A_control_by_B}

\[
J_{2}\left(r\right)=O\left(\frac{A^{2}\left(t\right)}{B^{4}\left(t\right)}\right)=O\left(1\right),\quad t\to t_{G}^{-},
\]
and combining this with (\ref{eq:J_1_asymp}) we conclude that
\[
\var{n_{f}\left(r\right)}=J_{1}\left(r\right)+J_{2}\left(r\right)\sim\frac{\zeta\left(\frac{3}{2}\right)}{4\sqrt{\pi}}\sqrt{b\left(r^{2}\right)},\quad r\to R_{G}^{-},
\]
thus proving Theorem \ref{thm:var_asymp}.\qed

\section{Upper bound for the variance - proof of Theorem \ref{thm:upper_bound}}

The upper bound for the variance is derived from the following algebraic
identity.
\begin{claim}
\label{clm:identity} For $a_{1},a_{2},a_{3}\in\bbr$ and $b_{1},b_{2},b_{3}\in\bbc$
the following holds
\[
\left|a_{1}b_{3}-\overline{b_{1}}b_{2}\right|^{2}=\left(a_{1}a_{3}-\left|b_{2}\right|^{2}\right)\left(a_{1}a_{2}-\left|b_{1}\right|^{2}\right)-a_{1}\cdot\det\left(\begin{array}{ccc}
a_{1} & \overline{b_{1}} & \overline{b_{2}}\\
b_{1} & a_{2} & \overline{b_{3}}\\
b_{2} & b_{3} & a_{3}
\end{array}\right).
\]
\end{claim}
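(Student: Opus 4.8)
The plan is to treat Claim~\ref{clm:identity} as a statement of elementary linear algebra about the Hermitian matrix
\[
M=\begin{pmatrix} a_{1} & \overline{b_{1}} & \overline{b_{2}}\\ b_{1} & a_{2} & \overline{b_{3}}\\ b_{2} & b_{3} & a_{3}\end{pmatrix},
\]
written in block form with scalar upper-left block $a_{1}$, off-diagonal column $v=(b_{1},b_{2})^{\mathsf{T}}$ (so that the upper-right block is $\overline{v}^{\mathsf{T}}$), and lower-right block $N=\bigl(\begin{smallmatrix} a_{2} & \overline{b_{3}}\\ b_{3} & a_{3}\end{smallmatrix}\bigr)$. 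First I would assume $a_{1}\ne 0$ and invoke the Schur complement formula $\det M=a_{1}\det\bigl(N-a_{1}^{-1}v\,\overline{v}^{\mathsf{T}}\bigr)$. Expanding the $2\times 2$ determinant on the right, using that $a_{2}-a_{1}^{-1}|b_{1}|^{2}=a_{1}^{-1}(a_{1}a_{2}-|b_{1}|^{2})$ and the analogous identity for the other diagonal entry, and that the two off-diagonal entries of $N-a_{1}^{-1}v\,\overline{v}^{\mathsf{T}}$ equal $a_{1}^{-1}(a_{1}b_{3}-\overline{b_{1}}b_{2})$ and its complex conjugate, one arrives at
\[
a_{1}\det M=(a_{1}a_{3}-|b_{2}|^{2})(a_{1}a_{2}-|b_{1}|^{2})-\bigl|a_{1}b_{3}-\overline{b_{1}}b_{2}\bigr|^{2},
\]
which is exactly the asserted identity after transposing $a_{1}\det M$ to the other side.

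It then remains to dispose of the excluded case $a_{1}=0$. The quickest way is to note that both sides of the identity are polynomial functions of $a_{1},a_{2},a_{3}$ and of the real and imaginary parts of $b_{1},b_{2},b_{3}$; since they agree on the dense set $\{a_{1}\ne 0\}$, they agree identically. (Alternatively one checks directly that at $a_{1}=0$ both sides collapse to $|b_{1}|^{2}|b_{2}|^{2}$, since then $a_{1}\det M=0$.)

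If a self-contained computation is preferred over the density argument, the identity can instead be verified by brute force: a cofactor expansion of $\det M$ along its first row gives $\det M=a_{1}a_{2}a_{3}-a_{1}|b_{3}|^{2}-a_{2}|b_{2}|^{2}-a_{3}|b_{1}|^{2}+2\,\mathrm{Re}(b_{1}b_{3}\overline{b_{2}})$; expanding also $(a_{1}a_{3}-|b_{2}|^{2})(a_{1}a_{2}-|b_{1}|^{2})$ and $\bigl|a_{1}b_{3}-\overline{b_{1}}b_{2}\bigr|^{2}=a_{1}^{2}|b_{3}|^{2}-2a_{1}\,\mathrm{Re}(b_{1}b_{3}\overline{b_{2}})+|b_{1}|^{2}|b_{2}|^{2}$ and combining, every cubic and quartic term cancels and the identity drops out. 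I do not expect a genuine obstacle here: the only point requiring care is the conjugation bookkeeping that makes the two cross terms combine into the single real quantity $2\,\mathrm{Re}(b_{1}b_{3}\overline{b_{2}})$. For the writeup I would nonetheless favor the Schur-complement route, since it makes transparent \emph{why} such an identity holds — it is precisely the block-determinant formula for $M$, rearranged.
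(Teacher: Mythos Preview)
Your proof is correct. The Schur complement computation goes through exactly as you describe, and both the polynomial-identity extension to $a_{1}=0$ and the direct brute-force expansion are valid.

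The paper takes a different route: it assumes first that the Hermitian matrix $M$ is positive definite, realizes it as the Gram matrix of vectors $v_{1},v_{2},v_{3}$, and then computes $\mathrm{dist}^{2}(v_{3},\mathrm{span}\{v_{1},v_{2}\})$ in two ways --- once as the ratio $\mathrm{Gram}(v_{1},v_{2},v_{3})/\mathrm{Gram}(v_{1},v_{2})$, and once by explicit Gram--Schmidt orthogonalization. Equating the two expressions and clearing denominators yields the identity in the positive (semi-)definite case, which is then extended by continuity. Your Schur complement argument is in fact the algebraic shadow of this Gram--Schmidt computation (the Schur complement of a Gram matrix with respect to the first entry is precisely the Gram matrix of the orthogonal projections onto $v_{1}^{\perp}$), but your presentation is cleaner: it avoids the detour through an inner-product realization, needs only $a_{1}\ne 0$ rather than positive definiteness, and makes the $2\times 2$ determinant structure transparent. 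The paper's geometric phrasing, on the other hand, foreshadows \emph{why} the identity is useful downstream --- the nonnegativity of $\det M$ in Claim~\ref{clm:basic_b_ineq} is exactly the positive semi-definiteness of a covariance matrix --- so there is some expository value in its choice.
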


\begin{proof}
One can directly show this equality holds, but here we will give a
geometric reasoning which holds when the matrix in the expression
above is positive definite (which is the case we use).In that case
we may take vectors $\left(v_{1},v_{2},v_{3}\right)$ so that their
Gram matrix satisfies
\begin{equation}
\left(\begin{array}{ccc}
a_{1} & \overline{b_{1}} & \overline{b_{2}}\\
b_{1} & a_{2} & \overline{b_{3}}\\
b_{2} & b_{3} & a_{3}
\end{array}\right)=\left(\begin{array}{ccc}
\left\langle v_{1},v_{1}\right\rangle  & \left\langle v_{2},v_{1}\right\rangle  & \left\langle v_{3},v_{1}\right\rangle \\
\left\langle v_{1},v_{2}\right\rangle  & \left\langle v_{2},v_{2}\right\rangle  & \left\langle v_{3},v_{2}\right\rangle \\
\left\langle v_{1},v_{3}\right\rangle  & \left\langle v_{2},v_{3}\right\rangle  & \left\langle v_{3},v_{3}\right\rangle 
\end{array}\right)\label{eq:Gram_matrix_rep}
\end{equation}
and we denote the corresponding Gram determinant by $\mathrm{Gram}\left(v_{1},v_{2},v_{3}\right)$.
On one side we have
\[
\mathrm{dist}^{2}\left(v_{3},\mathrm{span}\left\{ v_{1},v_{2}\right\} \right)=\frac{\mathrm{Gram}\left(v_{1},v_{2},v_{3}\right)}{\mathrm{Gram}\left(v_{1},v_{2}\right)}=\frac{\mathrm{Gram}\left(v_{1},v_{2},v_{3}\right)}{a_{1}a_{2}-\left|b_{1}\right|^{2}}.
\]
On the other hand, using the Gram-Schmidt process to find orthogonal
vectors $\left(w_{1},w_{2},w_{3}\right)$ we get
\begin{align*}
w_{1} & =v_{1},\\
w_{2} & =v_{2}-\frac{\left\langle v_{2},w_{1}\right\rangle }{\left\langle w_{1},w_{1}\right\rangle }w_{1},\\
w_{3} & =v_{3}-\frac{\left\langle v_{3},w_{1}\right\rangle }{\left\langle w_{1},w_{1}\right\rangle }w_{1}-\frac{\left\langle v_{3},w_{2}\right\rangle }{\left\langle w_{2},w_{2}\right\rangle }w_{2}.
\end{align*}
Taking into account (\ref{eq:Gram_matrix_rep}), we find that
\[
w_{1}=v_{1},\,w_{2}=v_{2}-\frac{\overline{b_{1}}}{a_{1}}v_{1},\,\left\langle w_{2},w_{2}\right\rangle =a_{2}-\frac{\left|b_{1}\right|^{2}}{a_{1}}=\frac{a_{1}a_{2}-\left|b_{1}\right|^{2}}{a_{1}}
\]
and
\[
\left\langle v_{3},w_{2}\right\rangle =\left\langle v_{3},v_{2}-\frac{\overline{b_{1}}}{a_{1}}v_{1}\right\rangle =\left\langle v_{3},v_{2}\right\rangle -\frac{b_{1}}{a_{1}}\left\langle v_{3},v_{1}\right\rangle =\overline{b_{3}}-\frac{b_{1}\overline{b_{2}}}{a_{1}}=\frac{\overline{a_{1}b_{3}-\overline{b_{1}}b_{2}}}{a_{1}}.
\]
Finally

\begin{align*}
\mathrm{dist}^{2}\left(v_{3},\mathrm{span}\left\{ v_{1},v_{2}\right\} \right) & =\left\langle v_{3},v_{3}\right\rangle -\frac{\left|\left\langle v_{3},w_{1}\right\rangle \right|^{2}}{\left\langle w_{1},w_{1}\right\rangle }-\frac{\left|\left\langle v_{3},w_{2}\right\rangle \right|^{2}}{\left\langle w_{2},w_{2}\right\rangle }\\
 & =a_{3}-\frac{\left|b_{2}\right|^{2}}{a_{1}}-\frac{\left|a_{1}b_{3}-\overline{b_{1}}b_{2}\right|^{2}}{a_{1}\left(a_{1}a_{2}-\left|b_{1}\right|^{2}\right)},
\end{align*}
and we get the required identity by multiplying the two expressions
for $\mathrm{dist}^{2}\left(v_{3},\mathrm{span}\left\{ v_{1},v_{2}\right\} \right)$
by $a_{1}\left(a_{1}a_{2}-\left|b_{1}\right|^{2}\right)$. By continuity
the identity extends to positive semi-definite matrices.
\end{proof}
Put
\[
g_{j}\left(\theta\right)=\sum_{n\in\bbz}n^{j}c_{n}^{2}e^{in\theta},\quad j\in\left\{ 0,1,2\right\} ,
\]
and write $g$ for $g_{0}$.
\begin{claim}
\label{clm:basic_b_ineq} Assume that $c_{n}\in\bbr$ for all $n\in\bbz$
and that $\sum_{n\in\bbz}n^{2}c_{n}^{2}<\infty$, then
\[
\left|g\left(0\right)g_{1}\left(\theta\right)-g_{1}\left(0\right)g\left(\theta\right)\right|^{2}\le\left(g\left(0\right)g_{2}\left(0\right)-g_{1}^{2}\left(0\right)\right)\left(g^{2}\left(0\right)-\left|g\left(\theta\right)\right|^{2}\right).
\]
\end{claim}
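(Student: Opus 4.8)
The plan is to deduce the inequality from the algebraic identity in Claim~\ref{clm:identity}, after choosing the scalars there so that the $3\times 3$ matrix that appears is a genuine Gram matrix, hence positive semidefinite with nonnegative determinant.

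First I would invoke Claim~\ref{clm:identity} with the substitution
\[
a_{1}=a_{3}=g(0),\qquad a_{2}=g_{2}(0),\qquad b_{1}=g_{1}(0),\qquad b_{2}=g(\theta),\qquad b_{3}=g_{1}(\theta).
\]
Since the $c_n$ are real, $b_{1}=g_{1}(0)=\sum_{n}nc_{n}^{2}\in\bbr$, so $\overline{b_{1}}=b_{1}$; thus the left side of the identity becomes $\bigl|g(0)g_{1}(\theta)-g_{1}(0)g(\theta)\bigr|^{2}$, and the two factors on the right become $g(0)g_{2}(0)-g_{1}^{2}(0)$ and $g^{2}(0)-|g(\theta)|^{2}$. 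The identity then reads
\[
\bigl|g(0)g_{1}(\theta)-g_{1}(0)g(\theta)\bigr|^{2}=\bigl(g^{2}(0)-|g(\theta)|^{2}\bigr)\bigl(g(0)g_{2}(0)-g_{1}^{2}(0)\bigr)-g(0)\det M,
\]
where
\[
M=\begin{pmatrix}g(0)&g_{1}(0)&\overline{g(\theta)}\\ g_{1}(0)&g_{2}(0)&\overline{g_{1}(\theta)}\\ g(\theta)&g_{1}(\theta)&g(0)\end{pmatrix}.
\]
Because $g(0)=\sum_{n}c_{n}^{2}\ge 0$, it only remains to verify that $\det M\ge 0$, for which it is enough that $M$ be positive semidefinite.

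To obtain this I would realize $M$ as a Gram matrix of three vectors in $\ell^{2}(\bbz)$. With $\{e_{n}\}_{n\in\bbz}$ the standard orthonormal basis, set
\[
v_{1}=\sum_{n}c_{n}e_{n},\qquad v_{2}=\sum_{n}nc_{n}e_{n},\qquad v_{3}=\sum_{n}c_{n}e^{-in\theta}e_{n}.
\]
The hypothesis $\sum_{n}n^{2}c_{n}^{2}<\infty$ puts $v_{2}\in\ell^{2}$, whence also $v_{1},v_{3}\in\ell^{2}$ (only the single term $c_{0}^{2}$ is added), and it likewise makes the series defining $g$ and $g_{1}$ absolutely convergent, so all entries of $M$ are well defined. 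A short computation of the inner products $\langle v_{j},v_{i}\rangle$, with the convention used for the Gram matrix in the proof of Claim~\ref{clm:identity} (see \eqref{eq:Gram_matrix_rep}), reproduces $M$ exactly: for instance $\langle v_{1},v_{1}\rangle=\sum_{n}c_{n}^{2}=g(0)$, $\langle v_{2},v_{2}\rangle=\sum_{n}n^{2}c_{n}^{2}=g_{2}(0)$, $\langle v_{2},v_{1}\rangle=\sum_{n}nc_{n}^{2}=g_{1}(0)$, $\langle v_{1},v_{3}\rangle=\sum_{n}c_{n}^{2}e^{in\theta}=g(\theta)$, and $\langle v_{2},v_{3}\rangle=\sum_{n}nc_{n}^{2}e^{in\theta}=g_{1}(\theta)$. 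Being a Gram matrix, $M$ is positive semidefinite, so $\det M\ge 0$, and the claimed inequality follows at once.

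I expect the only place that calls for care to be the bookkeeping of conjugates and indices: arranging the substitution so that $a_{1}b_{3}-\overline{b_{1}}b_{2}$ is literally $g(0)g_{1}(\theta)-g_{1}(0)g(\theta)$, and choosing $v_{3}$ with the factor $e^{-in\theta}$ (rather than $e^{in\theta}$) so that the off-diagonal entries of the Gram matrix carry the correct conjugation. Everything else reduces to a direct substitution.
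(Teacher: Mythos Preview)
Your proof is correct and follows essentially the same route as the paper: apply the identity of Claim~\ref{clm:identity} and then show the resulting $3\times 3$ matrix is positive semidefinite by exhibiting it as a Gram matrix. The only cosmetic differences are that the paper uses a different (equivalent) substitution into Claim~\ref{clm:identity}, obtaining a permuted version of your matrix $M$, and realizes it as the covariance matrix of the Gaussian vector $(h(0),h(-\theta),h_{1}(0))$ with $h_{j}(\theta)=\sum_{n}\xi_{n}n^{j}c_{n}e^{in\theta}$, whereas you use deterministic vectors in $\ell^{2}(\bbz)$; your version is slightly more elementary but otherwise identical in spirit.
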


\begin{proof}
By Claim \ref{clm:identity}, and using $\overline{g_{j}\left(\theta\right)}=g_{j}\left(-\theta\right)$,
we have
\begin{align*}
\left|g\left(0\right)g_{1}\left(\theta\right)-g_{1}\left(0\right)g\left(\theta\right)\right|^{2} & =\left(g\left(0\right)g_{2}\left(0\right)-g_{1}^{2}\left(0\right)\right)\left(g^{2}\left(0\right)-\left|g\left(\theta\right)\right|^{2}\right)\\
 & \quad-g\left(0\right)\cdot\det\left(\begin{array}{ccc}
g\left(0\right) & g\left(\theta\right) & g_{1}\left(0\right)\\
g\left(-\theta\right) & g\left(0\right) & g_{1}\left(-\theta\right)\\
g_{1}\left(0\right) & g_{1}\left(\theta\right) & g_{2}\left(0\right)
\end{array}\right).
\end{align*}
Let $h_{j}\left(\theta\right)=\sum_{n\in\bbz}\xi_{n}n^{j}c_{n}e^{in\theta}$
for $j\in\left\{ 0,1\right\} $, put $V=\left(h\left(0\right),h\left(-\theta\right),h_{1}\left(0\right)\right)$
(again with $h=h_{0}$), then we have
\[
\left(\ex{V_{j}\overline{V_{k}}}\right)_{j,k=1}^{3}=\left(\begin{array}{ccc}
g\left(0\right) & g\left(\theta\right) & g_{1}\left(0\right)\\
g\left(-\theta\right) & g\left(0\right) & g_{1}\left(-\theta\right)\\
g_{1}\left(0\right) & g_{1}\left(\theta\right) & g_{2}\left(0\right)
\end{array}\right).
\]
Thus the matrix above is a covariance matrix, hence positive semi-definite
and its determinant is non-negative.
\end{proof}

\subsection{Proof of Theorem \ref{thm:upper_bound}}

Now let $f$ be a Gaussian analytic function with covariance kernel
$G$, whose radius of convergence is $R_{G}$. For the proof again
it will be more convenient to use the exponential parameterization
\[
H\left(z\right)\eqdef G\left(e^{z}\right)=\sum_{n=0}^{\infty}a_{n}^{2}e^{nz},
\]
so that
\[
t_{G}=\log R_{G},\quad A\left(z\right)=\frac{H^{\prime}\left(z\right)}{H\left(z\right)}=\left(\log H\left(z\right)\right)^{\prime},\quad B\left(z\right)=A^{\prime}\left(z\right)=\left(\log H\left(z\right)\right)^{\prime\prime}.
\]
We have (see Claim \ref{clm:var_formula_appendix} in Appendix \ref{sec:Kahane_formulas})
\[
\var{n_{f}\left(r\right)}=\frac{1}{2\pi}\int_{-\pi}^{\pi}\frac{\left|H\left(t\right)H^{\prime}\left(t+i\theta\right)-H\left(t+i\theta\right)H^{\prime}\left(t\right)\right|^{2}}{H^{2}\left(t\right)\left(H^{2}\left(t\right)-\left|H^{2}\left(t+i\theta\right)\right|\right)}\dd\theta,\qquad\text{where }e^{t}=r^{2}.
\]
The following claim will finish the proof of Theorem \ref{thm:upper_bound}.
\begin{cor}
For all $t<t_{G}$ and $\theta\in\left[-\pi,\pi\right]$
\[
\frac{\left|H\left(t\right)H'\left(t+i\theta\right)-H\left(t+i\theta\right)H'\left(t\right)\right|^{2}}{H^{2}(t)\left(H^{2}(t)-\left|H^{2}(t+i\theta)\right|\right)}\le B\left(t\right).
\]
\end{cor}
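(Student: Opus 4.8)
The plan is to obtain this as an immediate consequence of Claim~\ref{clm:basic_b_ineq} by choosing the coefficients $c_n$ appropriately. Fix $t<t_G$ and set $c_n\eqdef a_ne^{nt/2}$ for $n\ge0$ and $c_n\eqdef0$ for $n<0$. These are real (in fact non-negative), and since $e^{t}$ lies strictly inside the disk of convergence of $G$ we have $\sum_{n}n^{2}c_{n}^{2}=\sum_{n\ge0}n^{2}a_{n}^{2}e^{nt}<\infty$, so the hypotheses of Claim~\ref{clm:basic_b_ineq} are satisfied. With this choice, for $j\in\{0,1,2\}$,
\[
g_{j}\left(\theta\right)=\sum_{n\ge0}n^{j}a_{n}^{2}e^{n\left(t+i\theta\right)}=H^{\left(j\right)}\left(t+i\theta\right),
\]
so in particular $g\left(0\right)=H\left(t\right)$, $g_{1}\left(0\right)=H^{\prime}\left(t\right)$, $g_{2}\left(0\right)=H^{\prime\prime}\left(t\right)$, $g\left(\theta\right)=H\left(t+i\theta\right)$, and $g_{1}\left(\theta\right)=H^{\prime}\left(t+i\theta\right)$.

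Substituting these into Claim~\ref{clm:basic_b_ineq} gives
\[
\left|H\left(t\right)H^{\prime}\left(t+i\theta\right)-H^{\prime}\left(t\right)H\left(t+i\theta\right)\right|^{2}\le\bigl(H\left(t\right)H^{\prime\prime}\left(t\right)-H^{\prime}\left(t\right)^{2}\bigr)\bigl(H\left(t\right)^{2}-\left|H\left(t+i\theta\right)\right|^{2}\bigr).
\]
Now I rewrite the two factors on the right. Since $B=A^{\prime}=\left(\log H\right)^{\prime\prime}=H^{\prime\prime}/H-\left(H^{\prime}/H\right)^{2}$, we have $H\left(t\right)H^{\prime\prime}\left(t\right)-H^{\prime}\left(t\right)^{2}=B\left(t\right)H\left(t\right)^{2}$; and $\left|H\left(t+i\theta\right)\right|^{2}=\left|H^{2}\left(t+i\theta\right)\right|$. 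By the triangle inequality $\left|H\left(t+i\theta\right)\right|\le\sum_{n\ge0}a_{n}^{2}e^{nt}=H\left(t\right)$, so the factor $H\left(t\right)^{2}-\left|H^{2}\left(t+i\theta\right)\right|$ is non-negative. Dividing the displayed inequality by $H\left(t\right)^{2}\bigl(H\left(t\right)^{2}-\left|H^{2}\left(t+i\theta\right)\right|\bigr)$, wherever this quantity is positive, yields exactly the asserted bound. On the set where it vanishes the left-hand side also vanishes, and since this set has measure zero it does not affect the integral in the variance formula used in the proof of Theorem~\ref{thm:upper_bound} (there the integrand is read by continuity).

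Since the argument is a one-line substitution into Claim~\ref{clm:basic_b_ineq}, there is no genuine obstacle; the only points meriting a moment's attention are checking $\sum n^{2}a_{n}^{2}e^{nt}<\infty$ (immediate from $e^{t}<R_{G}$) and treating the degenerate case $\left|H\left(t+i\theta\right)\right|=H\left(t\right)$, where numerator and denominator both vanish.
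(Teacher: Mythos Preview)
Your proof is correct and essentially identical to the paper's: both substitute $c_n^2=a_n^2e^{nt}$ into Claim~\ref{clm:basic_b_ineq}, identify $g_j(\theta)=H^{(j)}(t+i\theta)$, and rewrite the right-hand factor as $B(t)H(t)^2$. Your extra remark on the degenerate case $|H(t+i\theta)|=H(t)$ is a nice bit of care that the paper omits.
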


\begin{proof}
The result follows immediately from Claim \ref{clm:basic_b_ineq},
by taking $c_{n}^{2}=a_{n}^{2}e^{nt}$ for $n\in\bbn$, and $c_{n}=0$
otherwise, so that
\[
g\left(\theta\right)=\sum_{n=0}^{\infty}a_{n}^{2}e^{nt}e^{in\theta}=H\left(t+i\theta\right),\quad g_{1}\left(\theta\right)=H^{\prime}\left(t+i\theta\right),\quad g_{2}\left(\theta\right)=H^{\prime\prime}\left(t+i\theta\right),
\]
and
\[
\frac{g\left(0\right)g_{2}\left(0\right)-g_{1}^{2}\left(0\right)}{g^{2}\left(0\right)}=\frac{H\left(t\right)H^{\prime\prime}\left(t\right)-\left(H^{\prime}\left(t\right)\right)^{2}}{H^{2}\left(t\right)}=\left(\log H\left(t\right)\right)^{\prime\prime}=B\left(t\right).
\]
Notice that for $t<t_{G}$ we have that $\sum_{n\in\bbz}n^{2}c_{n}^{2}=\sum_{n\ge0}n^{2}a_{n}^{2}e^{nt}<\infty$
by the definition of $t_{G}$.
\end{proof}

\section{Gaussian entire functions with large variance - proof of Theorem~\ref{thm:sharpness_upper_bound_example}}

In this section, we will prove Theorem \ref{thm:sharpness_upper_bound_example}.
For an entire function $G$ we recall that
\[
a_{G}\left(r\right)=r\left(\log G\left(r\right)\right)^{\prime},\quad b_{G}\left(r\right)=ra_{G}^{\prime}\left(r\right),
\]
where in this section, we will sometimes add the subscript $G$ in
order to distinguish between functions $a,b$ associated with \emph{different}
entire functions $G$. Given a type II admissible covariance function
$G$ (see Section \ref{subsec:type_II_admissible}), we will construct
a Gaussian entire function $\widetilde{f}$ with a transcendental
covariance function $\widetilde{G}$, which is similar to $G$, that
is
\[
a_{\widetilde{G}}\left(r\right)\asymp_{L}a_{G}\left(r\right),\text{ and }b_{\widetilde{G}}\left(r\right)\asymp_{L}b_{G}\left(r\right),
\]
moreover, $\widetilde{G}$ is a restriction of $G$ (see Definition
\ref{def:restriction}). We then prove
\begin{equation}
\var{n_{f}\left(r\right)}\asymp_{L}b_{\widetilde{G}}\left(r^{2}\right),\label{eq:ex_Var_low_bnd}
\end{equation}
thus showing that the bound in Theorem \ref{thm:upper_bound} is sharp
up to a constant (and an exceptional set of values of $r$).

\subsection{Estimates for sums of Gaussians}

In this section we fix the parameters $A,B,s>0$ and $p$ a positive
integer.
\begin{claim}
\label{clm:Gauss_sum_bnds} Suppose $s\ge1$, and $p\le\sqrt{B}$.
For $j\in\left\{ 0,1,2\right\} $, we have
\[
\frac{1}{\sqrt{e}}\left(A-s\sqrt{B}\right)^{j}\le\sum_{\left|kp-A\right|\le s\sqrt{B}}\left(kp\right)^{j}\exp\left(-\frac{\left(kp-A\right)^{2}}{2B}\right)\le e^{2}\left[\frac{\sqrt{2\pi B}}{p}\left(A^{j}+B\cdot\mathbf{1}_{\left\{ j=2\right\} }\right)+1\right],
\]
where the sum runs over integers $k$, and
\[
\mathbf{1}_{\left\{ j=2\right\} }=\begin{cases}
0, & j\ne2;\\
1, & j=2.
\end{cases}
\]
\end{claim}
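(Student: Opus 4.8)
The plan is to treat the sum as a Riemann-sum-type approximation to a Gaussian integral, with the lattice $\{kp\}_{k\in\bbz}$ playing the role of a partition of mesh $p$. Write $\phi(x) = \exp(-(x-A)^2/(2B))$ and $S_j = \sum_{|kp - A| \le s\sqrt B} (kp)^j \phi(kp)$. For the upper bound, the first step is to compare $S_j$ to the integral $\int_{\bbr} x^j \phi(x)\,\dd x$ (for $j=0,1,2$ these are $\sqrt{2\pi B}$, $A\sqrt{2\pi B}$, and $(A^2+B)\sqrt{2\pi B}$ respectively). Since the summand, as a function of $x$, is not monotone, I would split the window $|x-A|\le s\sqrt B$ at the point(s) where $x^j\phi(x)$ is maximal (for $j\in\{0,1,2\}$ this is $x=A$, and for $j=1,2$ also near $x = A \pm$ something of order $\sqrt B$, but one can be crude here), and on each monotone piece bound each term $(kp)^j\phi(kp)\cdot p$ by the integral of $x^j\phi(x)$ over an adjacent interval of length $p$, picking up at most one ``extra'' maximal term per monotone piece. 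This yields $S_j \le \tfrac1p \int_{\bbr} x^j\phi(x)\,\dd x + (\text{const})\cdot \max_x x^j \phi(x)$. The maximum of $\phi$ is $1$; the maximum of $x\phi(x)$ is $O(\sqrt B)$ attained near $x\sim A$ when $A \gg \sqrt B$ (and is $O(A)$ otherwise — in either case $\lesssim \sqrt B/p \cdot A + 1$ after using $p\le\sqrt B$); the maximum of $x^2\phi(x)$ is $O(A^2 + B)$. Absorbing constants into the stated $e^2$ (which is generous) gives the claimed upper bound; the role of $p\le\sqrt B$ is precisely to ensure the mesh is finer than the standard deviation, so that the ``extra term'' contributions are genuinely lower order and fit under the $+1$ and under the main term.

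For the lower bound, the point is just that the window $|kp - A|\le s\sqrt B$ with $s\ge 1$ is wide enough (at least $2\sqrt B \ge 2p$) to contain at least one lattice point $kp$ with $|kp - A| \le \max(p, \text{something}) \le \sqrt B$, hence with $\phi(kp) = \exp(-(kp-A)^2/(2B)) \ge \exp(-1/2) = 1/\sqrt e$ — here I use $p \le \sqrt B$ again to guarantee a lattice point within distance $\sqrt B$ of $A$ actually lies in the window, since $s\sqrt B \ge \sqrt B \ge p$. For $j=0$ this immediately gives $S_0 \ge 1/\sqrt e \ge \tfrac1{\sqrt e}(A - s\sqrt B)^j$ (the RHS being $1$). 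For $j\in\{1,2\}$, I would instead select a lattice point as close as possible to the \emph{left endpoint} $A - s\sqrt B$ of the window (or note the whole window contributes and just keep one well-chosen term): keeping a single term $(k_0 p)^j \phi(k_0 p)$ with $k_0 p \ge A - s\sqrt B$ and $|k_0 p - A|\le s\sqrt B$, so that $\phi(k_0 p) \ge e^{-s^2/2}$ — hmm, that is weaker than $1/\sqrt e$. Better: choose $k_0 p$ within distance $\sqrt B$ of $A$ as above so $\phi(k_0 p)\ge 1/\sqrt e$, and then bound $(k_0 p)^j \ge (A - s\sqrt B)^j$ when $A - s\sqrt B \ge 0$ (and trivially $S_j \ge 0 = (A-s\sqrt B)^j$-lower-bound interpretation when $A - s\sqrt B < 0$, reading $(A-s\sqrt B)^j$ as the relevant nonnegative lower bound, or noting the statement is only informative when $A > s\sqrt B$). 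This gives $S_j \ge \tfrac1{\sqrt e}(A - s\sqrt B)^j$ as claimed.

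The main obstacle I anticipate is the upper bound bookkeeping for $j=1,2$: controlling the error between the discrete sum and the integral when the summand is non-monotone and can be large (of order $\sqrt B$ or $B$) near its peak, while still fitting everything under the clean form $\tfrac{\sqrt{2\pi B}}{p}(A^j + B\mathbf 1_{\{j=2\}}) + 1$ with the single universal constant $e^2$. The resolution is that there are only a bounded number of monotone pieces (two for $j=0$, at most three or four for $j=1,2$), so only a bounded number of ``extra maximal terms'' appear; each such term times $p$ is $\le p\cdot\max_x x^j\phi(x)$, and since $p\le\sqrt B$ this is absorbed — for $j=0$ into the ``$+1$'' (after dividing through, $p\cdot 1 \le \sqrt B$, which combined with the $1/p$ normalization... actually the cleanest route is to not divide by $p$ at all: keep $S_j \le \tfrac{\sqrt{2\pi B}}p A^j(\cdots) + (\text{bounded number})\cdot \max$, then check $\max_x \phi(x) = 1 \le$ the ``$1$'' term, $\max_x x\phi(x) \le \sqrt{B} \le \tfrac{\sqrt{2\pi B}}{p}\cdot$ something when combined appropriately, etc.). I would organize this by proving the three cases $j=0,1,2$ in parallel with a single comparison lemma: for a nonnegative function $\psi$ on an interval that is unimodal (increasing then decreasing), $\sum_{kp \in I} \psi(kp) \le \tfrac1p\int_I \psi + \|\psi\|_\infty$, applied with $\psi(x) = x^j\phi(x)$ after splitting $[A-s\sqrt B, A+s\sqrt B]$ into its (at most two) unimodal subintervals for each $j$. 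Everything else is then a routine Gaussian-moment computation plus the elementary inequality $p \le \sqrt B$, and the slack in the constant $e^2$ makes the final numerology painless.
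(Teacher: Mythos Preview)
Your plan is essentially correct and follows the same integral-comparison idea as the paper. For the lower bound the paper does exactly what you describe (one lattice point within distance $p\le\sqrt{B}$ of $A$ gives $\phi\ge e^{-1/2}$), only stated more tersely. For the upper bound the organization differs: rather than appealing to unimodality of $x\mapsto x^{j}\phi(x)$ and carrying a separate $\|\cdot\|_{\infty}$ error, the paper uses a purely multiplicative comparison. Writing $\phi(x)=\exp\bigl(-(x-A)^{2}/(2B)\bigr)$, for $(k-1)p\ge A$ one has both $\phi(kp)\le\phi(x)$ and $(kp)^{j}\le\bigl(\tfrac{k}{k-1}\bigr)^{j}x^{j}$ on $[(k-1)p,kp]$, and since $k\ge 2$ and $j\le 2$ the ratio satisfies $\bigl(\tfrac{k}{k-1}\bigr)^{j}\le e^{j/(k-1)}\le e^{2}$; this yields
\[
(kp)^{j}\phi(kp)\le\frac{e^{2}}{p}\int_{(k-1)p}^{kp}x^{j}\phi(x)\,\dd x
\]
directly, with the constant $e^{2}$ arising precisely here. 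A symmetric shift handles the range $kp<A$, after which the substitution $y=(x-A)/\sqrt{B}$ reduces everything to the Gaussian moments $\int_{-s}^{s}(y\sqrt{B}+A)^{j}e^{-y^{2}/2}\,\dd y$, whose odd part vanishes and whose even parts give exactly $A^{j}+B\cdot\mathbf{1}_{\{j=2\}}$. This sidesteps the bookkeeping you correctly flag as the main obstacle (locating the maxima of $x^{j}\phi(x)$ and fitting several $\|\cdot\|_{\infty}$ contributions under the ``$+1$''); the paper's multiplicative trick is worth knowing because it makes that step disappear and explains where the specific constant $e^{2}$ comes from.
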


\begin{proof}
Since $p\le\sqrt{B}$ the sum is non-empty and the lower bound follows.
In the other direction, put $\phi\left(x\right)=\exp\left(-\frac{\left(x-A\right)^{2}}{2B}\right)$.
If $\left(k-1\right)p\ge A$, then we have
\begin{align*}
\left(kp\right)^{j}\phi\left(kp\right) & \le\frac{1}{p}\left(\frac{k}{k-1}\right)^{j}\int_{\left(k-1\right)p}^{kp}x^{j}\phi\left(x\right)\,\dd x\\
 & \le\frac{1}{p}\exp\left(\frac{j}{k-1}\right)\int_{\left(k-1\right)p}^{kp}x^{j}\phi\left(x\right)\,\dd x\\
 & \le\frac{e^{2}}{p}\int_{\left(k-1\right)p}^{kp}x^{j}\phi\left(x\right)\,\dd x,
\end{align*}
where in the last inequality we used $k\ge2$. Thus,
\[
\sum_{A+p\le kp\le s\sqrt{B}}\left(kp\right)^{j}\phi\left(kp\right)\le\frac{e^{2}}{p}\int_{A}^{A+s\sqrt{B}}x^{j}\phi\left(x\right)\,\dd x.
\]
Using a similar argument for $\left(k-1\right)p\le A$ we get (adding
the integral from $-p$ to $p$ twice)
\begin{align*}
\sum_{\left|kp-A\right|\le s\sqrt{B}}\left(kp\right)^{j}\phi\left(kp\right) & \le\frac{e^{2}}{p}\int_{A-s\sqrt{B}}^{A+s\sqrt{B}}x^{j}\phi\left(x\right)\,\dd x+e^{2}\\
 & \overset{y=\frac{x-A}{\sqrt{B}}}{=}\frac{e^{2}\sqrt{B}}{p}\int_{-s}^{s}\left(y\sqrt{B}+A\right)^{j}e^{-\frac{1}{2}y^{2}}\,\dd y+e^{2}\\
 & \le e^{2}\left[\frac{\sqrt{2\pi B}}{p}\left(A^{j}+B\cdot\mathbf{1}_{\left\{ j=2\right\} }\right)+1\right].
\end{align*}
\end{proof}
\begin{claim}
\label{clm:double_Gauss_bnds} Suppose $s\ge1$, $p\le\sqrt{B}$.
We have,
\[
\frac{p^{2}}{e}\le\sum_{\begin{array}{c}
\left|k_{1}p-A\right|\le s\sqrt{B}\\
\left|k_{2}p-A\right|\le s\sqrt{B}
\end{array}}\left(k_{1}p-k_{2}p\right)^{2}\exp\left(-\frac{\left(k_{1}p-A\right)^{2}}{2B}-\frac{\left(k_{2}p-A\right)^{2}}{2B}\right)\le\frac{24e^{4}\pi B^{2}}{p^{2}},
\]
where the double sum runs over integers $k_{1},k_{2}$.
\end{claim}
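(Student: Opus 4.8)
The plan is to establish the two inequalities separately; both are elementary once one notes that the Gaussian weights $e^{-(kp-A)^2/2B}$ behave as if concentrated on a window of width $\asymp\sqrt B$, not the nominal width $s\sqrt B$.

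\emph{Lower bound.} This is obtained by keeping a single ordered pair of terms. I would take $k_0$ to be the integer nearest $A/p$, so that $|k_0p-A|\le p/2$ and $k_0\ge 0$, and then an adjacent index $k_1'\in\{k_0-1,k_0+1\}$ chosen so that $k_1'p$ lies on the opposite side of $A$ from $k_0p$ (and hence $k_1'\ge 0$); then $|k_1'p-A|\le p$. Because $s\ge 1$ and $p\le\sqrt B$, both $k_0p$ and $k_1'p$ lie in the window $\{|kp-A|\le s\sqrt B\}$, and since $|k_0-k_1'|=1$ the corresponding summand is at least $p^2\exp(-\tfrac{(p/2)^2}{2B}-\tfrac{p^2}{2B})\ge p^2 e^{-5/8}\ge p^2/e$, using $p^2\le B$. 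As every term of the double sum is nonnegative, this yields the lower bound.

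\emph{Upper bound.} Here I would first drop the window constraint (which only enlarges the nonnegative sum), then use $(k_1p-k_2p)^2\le 2(k_1p-A)^2+2(k_2p-A)^2$, which by the symmetry $k_1\leftrightarrow k_2$ factorizes the double sum:
\[
\sum_{k_1,k_2}(k_1p-k_2p)^2 e^{-\frac{(k_1p-A)^2}{2B}-\frac{(k_2p-A)^2}{2B}}\ \le\ 4\Bigl(\sum_{k\in\bbz}e^{-\frac{(kp-A)^2}{2B}}\Bigr)\Bigl(\sum_{k\in\bbz}(kp-A)^2 e^{-\frac{(kp-A)^2}{2B}}\Bigr).
\]
The first factor is at most $2e^2\sqrt{2\pi B}/p$ by Claim \ref{clm:Gauss_sum_bnds} (case $j=0$, using $p\le\sqrt B$ to absorb the ``$+1$''). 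For the second factor I would use the pointwise bound $u^2e^{-u^2/2B}\le 4Be^{-1}e^{-u^2/4B}$ (equivalently $te^{-t}\le e^{-1}$ with $t=u^2/4B$) and then an integral comparison for the unimodal function $x\mapsto e^{-(x-A)^2/4B}$, together with $p\le\sqrt B$, to obtain a bound of the shape $CB^{3/2}/p$. Multiplying the two factors gives a bound $C'B^2/p^2$ with $C'=128\sqrt2\,e\,\pi$, and one checks $128\sqrt2\le 24e^3$, so $C'\le 24e^4\pi$, as required.

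\emph{Main obstacle.} The one place where care is needed is that the naive estimate $(k_1p-k_2p)^2\le(2s\sqrt B)^2$ is too lossy — it produces a spurious factor $s^2$, whereas the claimed bound is $s$-free. The remedy is exactly the centering inequality $(k_1p-k_2p)^2\le 2(k_1p-A)^2+2(k_2p-A)^2$ combined with the fact that a Gaussian weight dominates a quadratic, which effectively replaces the width $s\sqrt B$ by $O(\sqrt B)$. Once that is in place the remainder is routine integral/maximal-term bookkeeping, and the constant $24e^4\pi$ leaves ample slack.
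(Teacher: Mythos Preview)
Your argument is correct. The lower bound is exactly what the paper means by its one-line remark that ``the double sum is not empty'': since $p\le\sqrt B\le s\sqrt B$, the two multiples of $p$ nearest $A$ lie in the window and are within $p$ of $A$, so a single cross-term already contributes at least $p^{2}e^{-1}$. Your write-up just makes this explicit.

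For the upper bound your route differs from the paper's. You use the centering inequality $(k_{1}p-k_{2}p)^{2}\le 2(k_{1}p-A)^{2}+2(k_{2}p-A)^{2}$ to factor the double sum into two one-variable sums, and then the pointwise bound $u^{2}e^{-u^{2}/2B}\le 4Be^{-1}e^{-u^{2}/4B}$ (plus integral comparison) to deal with the weighted sum, since Claim~\ref{clm:Gauss_sum_bnds} is stated with weights $(kp)^{j}$ rather than $(kp-A)^{j}$. The paper instead performs the exact ``$45^{\circ}$ rotation'' $n_{1}=k_{1}+k_{2}$, $n_{2}=k_{1}-k_{2}$, which turns the summand into $(n_{2}p)^{2}e^{-(n_{2}p)^{2}/4B}\cdot e^{-(n_{1}p-2A)^{2}/4B}$ \emph{identically}; each factor is then bounded by a direct application of Claim~\ref{clm:Gauss_sum_bnds} (with $B$ replaced by $2B$, and $A$ by $0$ or $2A$). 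The paper's version is a bit cleaner because the factorisation is exact and no auxiliary pointwise inequality is needed, but your approach is equally valid and the constant $24e^{4}\pi$ indeed absorbs the slack. One small remark: when you invoke Claim~\ref{clm:Gauss_sum_bnds} after dropping the window, note that its upper bound is uniform in $s$, so letting $s\to\infty$ is legitimate.
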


\begin{proof}
The lower bound follows since $p\le\sqrt{B}$, so that the double
sum is not empty. Put $\psi\left(x,y\right)=\left(x-y\right)^{2}\phi\left(x\right)\phi\left(y\right)$
so that
\begin{gather*}
\sum_{\begin{array}{c}
\left|k_{1}p-A\right|\le s\sqrt{B}\\
\left|k_{2}p-A\right|\le s\sqrt{B}
\end{array}}\left(pk_{1}-pk_{2}\right)^{2}\exp\left(-\frac{\left(k_{1}p-A\right)^{2}}{2B}-\frac{\left(k_{2}p-A\right)^{2}}{2B}\right)\\
=\sum_{\begin{array}{c}
\left|k_{1}p-A\right|\le s\sqrt{B}\\
\left|k_{2}p-A\right|\le s\sqrt{B}
\end{array}}\psi\left(k_{1}p,k_{2}p\right).
\end{gather*}
Writing $n_{1}=k_{1}+k_{2}$ and $n_{2}=k_{1}-k_{2}$, we find that
\begin{gather}
\sum_{\begin{array}{c}
\left|k_{1}p-A\right|\le s\sqrt{B}\\
\left|k_{2}p-A\right|\le s\sqrt{B}
\end{array}}\psi\left(k_{1}p,k_{2}p\right)\le\sum_{\begin{array}{c}
\left|n_{2}p\right|\le2s\sqrt{B}\\
\left|n_{1}p-2A\right|\le2s\sqrt{B}
\end{array}}\psi\left(\frac{\left(n_{2}-n_{1}\right)p}{2},\frac{\left(n_{2}+n_{1}\right)p}{2}\right)\nonumber \\
=\sum_{\left|n_{2}p\right|\le2s\sqrt{B}}\left(n_{2}p\right)^{2}\exp\left(-\frac{\left(n_{2}p\right)^{2}}{4B}\right)\cdot\sum_{\left|n_{1}p-2A\right|\le2s\sqrt{B}}\exp\left(-\frac{\left(n_{1}p-2A\right)^{2}}{4B}\right),\label{eq:double_sum}
\end{gather}
where we used the following identity in the last equality
\[
\psi\left(\frac{x-y}{2},\frac{x+y}{2}\right)=y^{2}\exp\left(-\frac{y^{2}}{4B}\right)\exp\left(-\frac{\left(x-2A\right)^{2}}{4B}\right).
\]
Bounding both factors in (\ref{eq:double_sum}) using the previous
claim, we conclude that
\begin{align*}
\sum_{\begin{array}{c}
\left|k_{1}p-A\right|\le s\sqrt{B}\\
\left|k_{2}p-A\right|\le s\sqrt{B}
\end{array}}\psi\left(k_{1}p,k_{2}p\right) & \le e^{4}\left(\frac{\sqrt{2\pi\cdot2B}}{p}\cdot2B+1\right)\left(\frac{\sqrt{2\pi\cdot2B}}{p}+1\right)\\
 & \le e^{4}\left(\frac{\sqrt{4\pi B}}{p}\right)^{2}\cdot\left(2B+1\right)\cdot2\\
 & \le\frac{24e^{4}\pi B^{2}}{p^{2}}.
\end{align*}
\end{proof}

\subsection{\label{subsec:type_II_admissible} Type II admissible functions and
their Taylor coefficients}

As before, we associate with $G$ the functions
\[
H\left(t\right)=G\left(e^{t}\right),\quad A\left(t\right)=\frac{H^{\prime}\left(t\right)}{H\left(t\right)},\quad B\left(t\right)=A^{\prime}\left(t\right)=\frac{H^{\prime\prime}\left(t\right)}{H\left(t\right)}-\left(\frac{H^{\prime}\left(t\right)}{H\left(t\right)}\right)^{2}.
\]
In this section we will use a stronger version of the Hayman admissibility
condition, which allows us to improve the error term in \cite[Theorem I]{Hayman}
(see the lemma below).
\begin{defn}
We say that an \emph{entire} function $G$ is \emph{type II} \emph{admissible}
if the function $H$ has the following properties: 
\end{defn}

\begin{enumerate}
\item \label{enu:Hayman_Admis_Assum_growth_B-1} $B\uparrow\infty$
\item \label{enu:Assump_A_control_by_B-1} $A\left(t\right)=O\left(B^{2}\left(t\right)\right)$
as $t\to\infty$.
\item \label{enu:Hayman_Admis_Assum_Asymp-1} There exist constants $C_{G}>2$
and $\varepsilon>0$ such that
\[
\log\frac{H\left(t+i\theta\right)}{H\left(t\right)}=i\theta A\left(t\right)-\frac{1}{2}\theta^{2}B\left(t\right)+\Delta\left(t,\theta\right),\qquad\forall\left|\theta\right|\le\delta\left(t\right),
\]
where
\[
\delta\left(t\right):=\sqrt{C_{G}\frac{\log B\left(t\right)}{B\left(t\right)}},\qquad\left|\Delta\left(t,\theta\right)\right|\le B^{3/2-\varepsilon}\left(t\right)\left|\theta\right|^{3}.
\]
\item \label{enu:Hayman_Admis_Assum_decay-1} $\left|H\left(t+i\theta\right)\right|=O\left(\frac{H\left(t\right)}{B\left(t\right)}\right)$
as $t\to\infty$, uniformly in $\left|\theta\right|\in\left[\delta\left(t\right),\pi\right]$.
\end{enumerate}
We again put $r^{2}=e^{t}$.
\begin{rem}
Throughout this section, in order make the expressions shorter, we
may suppress the dependence of $H,A,B,$ and other parameters on $t$
inside the proofs. 
\end{rem}

\begin{lem}[{cf. \cite[Theorem 1]{Hayman}}]
\label{lem:coeff_asymp} Suppose that $G\left(z\right)=\sum_{n=0}^{\infty}a_{n}^{2}z^{n}$
is type II admissible, then there exists an $\varepsilon\in\left(0,1\right)$
such that for all $t$ sufficiently large,
\[
a_{n}^{2}e^{nt}=\frac{H\left(t\right)}{\sqrt{2\pi B\left(t\right)}}\left[\exp\left(-\frac{\left(n-A\left(t\right)\right)^{2}}{2B\left(t\right)}\right)+O\left(\frac{1}{B^{\varepsilon}\left(t\right)}\right)\right],\qquad\text{as \ensuremath{\quad t\to\infty,}}
\]
uniformly in $n$.
\end{lem}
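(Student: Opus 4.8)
The starting point is the Fourier integral representation of the Taylor coefficients in the exponential variable: since $H(t+i\theta)=\sum_{m\ge0}a_m^2 e^{mt}e^{im\theta}$, for every $n\in\bbn$ and every $t<\infty$ we have
\[
a_n^2 e^{nt}=\frac{1}{2\pi}\int_{-\pi}^{\pi}H(t+i\theta)\,e^{-in\theta}\,\dd\theta.
\]
The plan is to estimate this integral by the saddle-point method, following Hayman's proof of \cite[Theorem 1]{Hayman}, but retaining the power-saving error terms afforded by the type II hypotheses. Split $[-\pi,\pi]=I_1\cup I_2$ into the major arc $I_1=\{|\theta|\le\delta(t)\}$ and the minor arc $I_2=\{\delta(t)\le|\theta|\le\pi\}$, where $\delta(t)=\sqrt{C_G\log B(t)/B(t)}$.

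On $I_2$, Assumption~\ref{enu:Hayman_Admis_Assum_decay-1} gives $|H(t+i\theta)|=O\left(H(t)/B(t)\right)$ uniformly in $\theta$, so the $I_2$-part of the integral is $O\left(H(t)/B(t)\right)$, which is $O\left(H(t)B(t)^{-1/2-\veps}\right)$ once $\veps<\tfrac12$ and $t$ is large. On $I_1$, Assumption~\ref{enu:Hayman_Admis_Assum_Asymp-1} lets me write
\[
H(t+i\theta)\,e^{-in\theta}=H(t)\exp\left(i\theta\left(A(t)-n\right)-\tfrac12\theta^2 B(t)+\Delta(t,\theta)\right),\qquad|\Delta(t,\theta)|\le B(t)^{3/2-\veps}|\theta|^3.
\]
Since $|\theta|^3\le\left(C_G\log B(t)/B(t)\right)^{3/2}$ on $I_1$, it follows that $|\Delta(t,\theta)|\le C_G^{3/2}(\log B(t))^{3/2}B(t)^{-\veps}\to0$, hence $e^{\Delta}$ is bounded and $|e^{\Delta(t,\theta)}-1|\lesssim|\Delta(t,\theta)|\le B(t)^{3/2-\veps}|\theta|^3$ on $I_1$. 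Replacing $e^{\Delta}$ by $1$ therefore costs at most
\[
\frac{H(t)}{2\pi}\int_{I_1}e^{-\frac12\theta^2 B(t)}B(t)^{3/2-\veps}|\theta|^3\,\dd\theta\;\lesssim\;H(t)B(t)^{3/2-\veps}\int_{\bbr}|\theta|^3 e^{-\frac12\theta^2 B(t)}\,\dd\theta\;\asymp\;H(t)B(t)^{-1/2-\veps},
\]
using $\int_{\bbr}|\theta|^3 e^{-\frac12 c\theta^2}\,\dd\theta\asymp c^{-2}$. What remains is $\frac{H(t)}{2\pi}\int_{I_1}e^{i\theta(A(t)-n)-\frac12\theta^2 B(t)}\,\dd\theta$; completing this integral to all of $\bbr$ costs the Gaussian tail $\frac{H(t)}{2\pi}\int_{|\theta|>\delta(t)}e^{-\frac12\theta^2 B(t)}\,\dd\theta\lesssim H(t)\left(B(t)\log B(t)\right)^{-1/2}B(t)^{-C_G/2}$, which for $C_G>2$ is again $O\left(H(t)B(t)^{-1/2-\veps}\right)$. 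Finally the full-line Gaussian integral equals $\sqrt{2\pi/B(t)}\,e^{-(n-A(t))^2/(2B(t))}$, producing the main term $\frac{H(t)}{\sqrt{2\pi B(t)}}e^{-(n-A(t))^2/(2B(t))}$. Collecting the three error contributions gives
\[
a_n^2 e^{nt}=\frac{H(t)}{\sqrt{2\pi B(t)}}\left[\exp\left(-\frac{(n-A(t))^2}{2B(t)}\right)+O\left(B(t)^{-\veps}\right)\right],
\]
and since every estimate above is independent of $n$, so is the conclusion.

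The bookkeeping is the only delicate point, and the main obstacle is to arrange it coherently: the exponent $\veps$ in the statement must be chosen small enough that each of the three error sources — the minor arc, the discrepancy $e^{\Delta}-1$ on the major arc, and the Gaussian tail — is $O\left(H(t)B(t)^{-1/2-\veps}\right)$. It suffices to take $\veps$ strictly smaller than both $\tfrac12$ and the exponent $\veps$ occurring in Assumption~\ref{enu:Hayman_Admis_Assum_Asymp-1}, the hypothesis $C_G>2$ then handling the tail automatically. This is precisely where the stronger (type II, rather than type I) admissibility hypotheses — the cubic bound on $\Delta(t,\theta)$ and the $O(H/B)$ off-axis decay — are used.
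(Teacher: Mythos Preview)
Your proof is correct and follows essentially the same saddle-point argument as the paper: the same Fourier representation of $a_n^2e^{nt}$, the same major/minor arc split at $\delta(t)$, and the same use of Assumptions~\ref{enu:Hayman_Admis_Assum_Asymp-1} and~\ref{enu:Hayman_Admis_Assum_decay-1} to extract the Gaussian main term with a power-saving error. The only cosmetic difference is that you bound the $e^{\Delta}-1$ discrepancy by integrating $|\theta|^3$ against the Gaussian weight (yielding $H\,B^{-1/2-\veps}$ directly), whereas the paper uses the cruder bound $B^{3/2-\veps}\delta^4$ and picks up harmless $\log B$ factors.
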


\begin{proof}
Put $\delta=\delta\left(t\right)$. By Cauchy's formula
\[
a_{n}^{2}e^{nt}=\frac{1}{2\pi}\int_{-\pi}^{\pi}H\left(t+i\theta\right)e^{-in\theta}\,\dd\theta,
\]
which we write as $a_{n}^{2}e^{nt}=I_{1}+I_{2}$, where
\[
I_{1}=\frac{1}{2\pi}\int_{\left|\theta\right|\le\delta}H\left(t+i\theta\right)e^{-in\theta}\,\dd\theta,\qquad I_{2}=\frac{1}{2\pi}\int_{\delta<\left|\theta\right|\le\pi}H\left(t+i\theta\right)e^{-in\theta}\,\dd\theta.
\]
By Assumption \ref{enu:Hayman_Admis_Assum_decay-1}, we have, uniformly
in $n$,
\[
I_{2}=O\left(\frac{H\left(t\right)}{B\left(t\right)}\right).
\]
By Assumption  we have
\begin{align*}
I_{1} & =\frac{H}{2\pi}\int_{\left|\theta\right|\le\delta}\exp\left(-i\theta\left(n-A\right)-\frac{1}{2}\theta^{2}B+\Delta\left(\theta\right)\right)\,\dd\theta\\
 & =\frac{H}{2\pi}\int_{\left|\theta\right|\le\delta}\left(1+O\left(\Delta\left(\theta\right)\right)\right)\exp\left(-i\theta\left(n-A\right)-\frac{1}{2}\theta^{2}B\right)\,\dd\theta\\
 & =\frac{H}{2\pi}\int_{\left|\theta\right|\le\delta}\exp\left(-i\theta\left(n-A\right)-\frac{1}{2}\theta^{2}B\right)\,\dd\theta+O\left(B^{3/2-\varepsilon}\delta^{4}\right)\\
 & =\frac{H}{2\pi}\int_{\left|\theta\right|\le\delta}\exp\left(-i\theta\left(n-A\right)-\frac{1}{2}\theta^{2}B\right)\,\dd\theta+O\left(\frac{\left(\log B\right)^{2}}{B^{\frac{1}{2}+\varepsilon}}\right).
\end{align*}
In order to find the asymptotic behavior of $I_{1}$, we make the
change of variables
\[
y=\theta\sqrt{\frac{B}{2}},\qquad\alpha\eqdef\frac{n-A}{\sqrt{B/2}},
\]
and obtain (uniformly in $n$)
\begin{align*}
I_{1} & =\frac{H}{\pi\sqrt{2B}}\int_{\left|y\right|\le\delta\sqrt{\frac{B}{2}}}\exp\left(-y^{2}-i\alpha y\right)\,\dd y+O\left(\frac{1}{B^{\frac{1}{2}\left(1+\varepsilon\right)}}\right)\\
 & =\frac{H}{\pi\sqrt{2B}}\left[\int_{\bbr}-\int_{\left|y\right|\ge\delta\sqrt{\frac{B}{2}}}\right]\exp\left(-y^{2}-i\alpha y\right)\,\dd y+O\left(\frac{1}{B^{\frac{1}{2}\left(1+\varepsilon\right)}}\right)\\
 & =\frac{H}{\sqrt{2\pi B}}\exp\left(-\frac{1}{4}\alpha^{2}\right)+O\left(\exp\left(-\left(\delta\sqrt{\frac{B}{2}}\right)^{2}\right)\right)+O\left(\frac{1}{B^{\frac{1}{2}\left(1+\varepsilon\right)}}\right)\\
 & =\frac{H}{\sqrt{2\pi B}}\exp\left(-\frac{1}{4}\alpha^{2}\right)+O\left(\frac{1}{B}\right)+O\left(\frac{1}{B^{\frac{1}{2}\left(1+\varepsilon\right)}}\right)\\
 & =\frac{H}{\sqrt{2\pi B}}\exp\left(-\frac{\left(n-A\right)^{2}}{2B}\right)+O\left(\frac{1}{B^{\frac{1}{2}\left(1+\varepsilon\right)}}\right).
\end{align*}
\end{proof}

\subsection{Construction of the covariance function $\widetilde{G}$}

Recall that in Section \ref{subsec:normal_values} we defined a sequence
of intervals $\left\{ T_{\ell}\right\} _{\ell=1}^{\infty}$ so that
\[
T_{\ell}=\left[t_{\ell},t_{\ell+1}\right],\qquad\left|T_{\ell}\right|=t_{\ell+1}-t_{\ell},\qquad B\left(t_{\ell}\right)=\ell^{6},\qquad\ell\ge1.
\]
Moreover, the interval $T_{\ell}$ is \emph{long} if $\left|T_{\ell}\right|\ge\frac{8}{\ell^{2}},$
and in that case its \emph{interior} is given by
\[
\longint{\ell}=\left[t_{\ell}+\frac{2}{l^{2}},t_{\ell+1}-\frac{2}{\ell^{2}}\right].
\]
The set $\cX$ of normal values is given by
\[
\cX\eqdef\bigcup_{T_{\ell}\text{ long}}\longint{\ell}.
\]
We also remind that the Lebesgue measure of the set $\bbr^{+}\backslash\cX$
is finite.

We now fix the sequences $p_{\ell}=\ell^{3}=\sqrt{B\left(t_{\ell}\right)}$
, and $s_{\ell}=c_{1}\sqrt{\log\ell}$ (see Proposition \ref{prop:Q_bnds})
and define the following sets
\[
\cI_{\ell}\eqdef\left[A\left(t_{\ell}\right),A\left(t_{\ell+1}\right)\right)\cap\bbn,\qquad\cI_{\ell}^{\mathbf{p}}\eqdef\left\{ n\in\cI_{\ell}\,:\,p_{\ell}\mid n\right\} ,
\]
and the corresponding exponential polynomials
\begin{equation}
P_{\ell}\left(t\right)=\sum_{n\in\cI_{\ell}^{\mathbf{p}}}a_{n}^{2}e^{nt}.\label{eq:P_l_def}
\end{equation}
The function $\widetilde{G}$ is constructed as follows

\begin{equation}
\widetilde{G}\left(e^{t}\right)=\widetilde{H}\left(t\right)=\sum_{\ell=0}^{\infty}P_{\ell}\left(t\right)\defeq\sum_{n=0}^{\infty}\delta_{n}a_{n}^{2}e^{nt}.\label{eq:G_tilde_def}
\end{equation}
For $t\in\longint{\ell}$ the main indices $n$ corresponding to $t$
are included in the following window
\[
\cI_{\ell}^{\mathbf{p}}\left(t\right)\eqdef\left\{ n\in\cI_{\ell}^{\mathbf{p}}\,:\,\left|n-A\left(t\right)\right|\le s_{\ell}\sqrt{B\left(t\right)}\right\} .
\]
Finally, we put
\[
R_{\ell}^{\left(j\right)}\left(t\right)=\sum_{n\in\cI_{\ell}^{\mathbf{p}}\left(t\right)}n^{j}a_{n}^{2}e^{nt},\qquad\overline{R}_{\ell}^{\left[j\right]}\left(t\right)=\sum_{n\in\cI_{\ell}^{\mathbf{p}}\left(t\right)}\left(n-A\left(t\right)\right)^{j}a_{n}^{2}e^{nt},
\]
and
\[
Q_{\ell}^{\left(j\right)}\left(\tau,t\right)=\sum_{\left|n-A\left(t\right)\right|>s_{\ell}\sqrt{B\left(t\right)}}n^{j}a_{n}^{2}e^{n\tau},\qquad\overline{Q}_{\ell}^{\left[j\right]}\left(\tau,t\right)=\sum_{\left|n-A\left(t\right)\right|>s_{\ell}\sqrt{B\left(t\right)}}\left(n-A\left(t\right)\right)^{j}a_{n}^{2}e^{n\tau}.
\]

\begin{prop}
\label{prop:Q_bnds} For $t\in\cX$ and $\left|\tau-t\right|\le\frac{1}{2\sqrt{B\left(t\right)}}$,
we have
\[
\max_{j\in\left\{ 0,1,2\right\} }\left\{ \left|Q_{\ell}^{\left(j\right)}\left(\tau\right)\right|,\left|\overline{Q}_{\ell}^{\left[j\right]}\left(\tau,t\right)\right|\right\} \le\frac{H\left(\Re{\tau}\right)}{B^{3}\left(t\right)}.
\]
\end{prop}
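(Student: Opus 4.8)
The plan is to run the exponential-tilting (Chernoff) estimate underlying Lemma~\ref{lem:poly_approx}, but now carrying an extra polynomial weight, and to exploit the type~II hypothesis $A=O(B^{2})$ (Assumption~\ref{enu:Assump_A_control_by_B-1}) so that the relevant moments of Rosenbloom's variable stay polynomially bounded in $\ell$. Fix $t\in\cX$, so $t\in\longint{\ell}$ for a long interval with $\ell$ large. Recall $B(t_{\ell})=\ell^{6}=p_{\ell}^{2}$, and since $B\uparrow\infty$ one has $B(t_{\ell})\le B(t)\le B(t_{\ell+1})\le4B(t_{\ell})$, while $A(t)=O(B^{2}(t))=O(\ell^{12})$. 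Put $\sigma=\Re{\tau}$; then $|\sigma-t|\le\frac{1}{2\sqrt{B(t)}}\le\frac{1}{2\ell^{3}}$, so $\sigma\in T_{\ell}$. Since $|a_{n}^{2}e^{n\tau}|=a_{n}^{2}e^{n\sigma}$, since $n^{j}\le1+n^{2}$ and $|n-A(t)|^{j}\le2(1+A(t)^{2})(1+n^{2})=O(\ell^{24})(1+n^{2})$ for $j\in\{0,1,2\}$, and since $B^{3}(t)\le64\ell^{18}$, the proposition reduces to the single estimate
\[
S\eqdef\sum_{|n-A(t)|>s_{\ell}\sqrt{B(t)}}(1+n^{2})\,a_{n}^{2}e^{n\sigma}\le c_{2}\,H(\sigma)\,\ell^{-42}
\]
for a suitable absolute constant $c_{2}>0$ and all large $\ell$, because the left-hand side dominates $|Q_{\ell}^{(j)}(\tau)|$ and, up to the factor $O(\ell^{24})$, also $|\overline{Q}_{\ell}^{[j]}(\tau,t)|$, no matter which index set these sums are taken over.

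To prove this, split $S$ into the contributions of $n>A(t)+s_{\ell}\sqrt{B(t)}$ and of $n<A(t)-s_{\ell}\sqrt{B(t)}$; consider the former. With $\lambda\eqdef\frac{s_{\ell}-2}{4\sqrt{B(t)}}>0$ and Rosenbloom's variable $X_{\sigma}$ (so $\ex{X_{\sigma}}=A(\sigma)$, $\var{X_{\sigma}}=B(\sigma)$), the elementary inequality $1\le e^{\lambda n-\lambda(A(t)+s_{\ell}\sqrt{B(t)})}$ valid on this range gives
\[
\sum_{n>A(t)+s_{\ell}\sqrt{B(t)}}(1+n^{2})a_{n}^{2}e^{n\sigma}\le e^{-\lambda(A(t)+s_{\ell}\sqrt{B(t)})}\sum_{n\ge0}(1+n^{2})a_{n}^{2}e^{n(\sigma+\lambda)}=e^{-\lambda(A(t)+s_{\ell}\sqrt{B(t)})}H(\sigma+\lambda)\,\ex{1+X_{\sigma+\lambda}^{2}}.
\]
Since $s_{\ell}=c_{1}\sqrt{\log\ell}=o(\ell)$ one checks $\sigma+\lambda\in T_{\ell}$ (the displacement $|\sigma-t|+\lambda=O(s_{\ell}/\sqrt{B(t_{\ell})})=o(\ell^{-2})$ is below the distance $\tfrac{2}{\ell^{2}}$ of $\longint{\ell}$ from the endpoints of $T_{\ell}$), hence $B(\sigma+\lambda)\le4\ell^{6}$, $A(\sigma+\lambda)=O(\ell^{12})$, and $\ex{1+X_{\sigma+\lambda}^{2}}=1+B(\sigma+\lambda)+A(\sigma+\lambda)^{2}=O(\ell^{24})$. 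By~(\ref{eq:gen_func_est}), $\log\frac{H(\sigma+\lambda)}{H(\sigma)}\le\lambda A(\sigma)+2\lambda^{2}B(t_{\ell})$, so the prefactor is at most $H(\sigma)\exp(\lambda(A(\sigma)-A(t))-\lambda s_{\ell}\sqrt{B(t)}+2\lambda^{2}B(t_{\ell}))$, and using $|A(\sigma)-A(t)|\le|\sigma-t|\sup_{T_{\ell}}B\le2\sqrt{B(t)}$, then $B(t_{\ell})\le B(t)$, and the choice of $\lambda$, this exponent is at most $-\lambda(s_{\ell}-2)\sqrt{B(t)}+2\lambda^{2}B(t)=-\tfrac18(s_{\ell}-2)^{2}$. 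The lower tail is identical after tilting by $-\lambda$ and applying~(\ref{eq:gen_func_est}) with $-\lambda$. Altogether
\[
S\le O(\ell^{24})\,H(\sigma)\,\exp\!\left(-\tfrac18(s_{\ell}-2)^{2}\right).
\]

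Finally, $\exp(-\tfrac18(s_{\ell}-2)^{2})=\exp(-\tfrac18 c_{1}^{2}\log\ell+O(\sqrt{\log\ell}))\le\ell^{-c_{1}^{2}/8+o(1)}$, so if $c_{1}$ is chosen large enough that $\tfrac18 c_{1}^{2}>66$, then $S\le c_{2}H(\sigma)\ell^{-42}$ for every sufficiently large $\ell$, i.e. for $t\in\cX$ outside finitely many of the intervals $\longint{\ell}$; as the construction permits discarding finitely many such intervals, this is precisely the assertion (and it is this inequality that pins down the constant $c_{1}$ promised earlier). The routine part is the bookkeeping of the powers of $\ell$. The one substantive point — and the main obstacle — is that the Chernoff tilt must be calibrated exactly as in Lemma~\ref{lem:poly_approx} so that the factors $e^{-\lambda A(t)}$ and $e^{\lambda A(\sigma)}$ cancel up to the $O(\sqrt{B(t)})$ discrepancy $A(\sigma)-A(t)$, after which the surviving moment $\ex{1+X_{\sigma+\lambda}^{2}}$ must be controlled; it is only polynomially large in $\ell$ precisely because of the hypothesis $A=O(B^{2})$, and without that hypothesis no fixed choice of $c_{1}$ would suffice. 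Apart from the weight $1+n^{2}$ and the need to push the tail decay from $\tfrac14$ down to $\ell^{-42}$, this is the $j=0$ computation already carried out in the proof of Proposition~\ref{prop:rosenbloom_local_adm}.
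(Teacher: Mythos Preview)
Your proof is correct and takes a somewhat different route from the paper's. The paper first applies Lemma~\ref{lem:poly_approx} as a black box to obtain $|Q_{\ell}(\tau)|\le H(\Re{\tau})/B^{8}(t)$ (with $c_{1}>8\sqrt{6}$), then bounds the derivatives $Q_{\ell}^{(1)},Q_{\ell}^{(2)}$ via Cauchy's integral formula on a circle of radius $\tfrac{1}{2\sqrt{B(t)}}$ about $\tau$ (losing a factor $\sqrt{B}$ per derivative), and finally expresses each $\overline{Q}_{\ell}^{[j]}$ algebraically as a combination of $Q_{\ell}^{(0)},Q_{\ell}^{(1)},Q_{\ell}^{(2)}$ with coefficients up to $A(t)^{2}$, invoking $A=O(B^{2})$ only at that last step. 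You instead rerun the Chernoff tilt with the weight $1+n^{2}$ built in, so that the tilted sum becomes $H(\sigma\pm\lambda)\,\ex{1+X_{\sigma\pm\lambda}^{2}}$; the second moment is $1+B+A^{2}=O(\ell^{24})$ by the same hypothesis, and you absorb this polynomial loss by enlarging $c_{1}$. Your route is more probabilistic and avoids the Cauchy step entirely; the paper's is more modular, reusing Lemma~\ref{lem:poly_approx} verbatim and getting derivative bounds for free from analyticity. One minor remark: your claim that $A=O(B^{2})$ is indispensable is slightly overstated --- had you tilted the \emph{centered} moment $\ex{(X_{\sigma+\lambda}-A(t))^{2}}=B(\sigma+\lambda)+(A(\sigma+\lambda)-A(t))^{2}=O(s_{\ell}^{2}B)$ directly instead of passing through $1+n^{2}$, the factor $A^{2}$ would never have appeared; but since the hypothesis is available and the paper's own argument uses it too, this is a matter of taste, not correctness.
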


\begin{rem}
The choice of the exponent $3$ is arbitrary, it can be replaced by
any large positive number.
\end{rem}

\begin{proof}
Fix an interval $\longint{\ell}\subset\cX$. By Lemma \ref{lem:poly_approx}
we will have
\[
\left|Q_{\ell}\left(\tau\right)\right|\le\sum_{\left|k-A\left(t\right)\right|>s\sqrt{B\left(t\right)}}a_{k}^{2}e^{k\text{\ensuremath{\Re{\ensuremath{\tau}}}}}\le2H\left(\Re{\tau}\right)\exp\left(-\frac{1}{8}\left(s-4\right)^{2}\right),
\]
as long as $4<s<B^{1/6}\left(t_{\ell}\right)$. Notice that in order
to guarantee that
\[
2\exp\left(-\frac{1}{8}\left(s-4\right)^{2}\right)\le\frac{1}{\ell^{48}}=\frac{1}{B^{8}\left(t_{\ell}\right)}\le\frac{1}{B^{8}\left(t\right)},
\]
it is sufficient to take $s=4+2\sqrt{2\log2+96\log\left(\ell\right)}$.
Thus, we may choose any $c_{1}>8\sqrt{6}$ in the definition of $s_{\ell}$,
and obtain

\[
\left|Q_{\ell}\left(\tau\right)\right|\le\frac{H\left(\Re{\tau}\right)}{B^{8}\left(t\right)}.
\]
 For $\tau\in\bbc$ which satisfies $\left|\tau-t\right|<\frac{1}{2\sqrt{B\left(t\right)}}$,
consider the contour $\Gamma=\left\{ z:\left|z-\tau\right|=\frac{1}{2\sqrt{B\left(t\right)}}\right\} $.
Using Cauchy's integral formula, we get
\[
\left|Q_{\ell}^{\left(j\right)}\left(\tau\right)\right|=\left|\frac{1}{2\pi i}\int_{\Gamma}\frac{Q_{\ell}\left(z\right)}{\left(z-\tau\right)^{j+1}}\dd z\right|\lesssim\frac{H\left(\Re{\tau}\right)}{\left[B\left(t\right)\right]^{8-\frac{1}{2}j}}.
\]
In addition, since the the coefficients $a_{n}^{2}$ are non-negative,
we also notice that when $\left|\Re{\tau}-t\right|\le\frac{1}{2\sqrt{B\left(t\right)}}$,
and $j\in\left\{ 0,1,2\right\} $,
\begin{equation}
\left|Q_{\ell}^{\left(j\right)}\left(\tau\right)\right|\le\left|Q_{\ell}^{\left(j\right)}\left(\Re{\tau}\right)\right|\le\frac{H\left(\Re{\tau}\right)}{B^{7}\left(t\right)}.\label{eq:bound_for_Q_ell_deriv}
\end{equation}
To bound $\left|\overline{Q}_{\ell}^{\left[j\right]}\left(\tau,t\right)\right|$,
notice that $\overline{Q}_{\ell}^{\left[0\right]}\left(\tau,t\right)=Q_{\ell}\left(\tau,t\right)$,
\begin{align*}
\overline{Q}_{\ell}^{\left[1\right]}\left(\tau,t\right) & =Q_{\ell}^{\prime}\left(\tau,t\right)-A\left(t\right)\cdot Q_{\ell}\left(\tau,t\right),\\
\overline{Q}_{\ell}^{\left[2\right]}\left(\tau,t\right) & =Q_{\ell}^{\prime\prime}\left(\tau,t\right)-2A\left(t\right)\cdot Q_{\ell}^{\prime}\left(\tau,t\right)+A\left(t\right)^{2}\cdot Q_{\ell}\left(\tau,t\right),
\end{align*}
and use Assumption \ref{enu:Assump_A_control_by_B-1}.
\end{proof}
\begin{prop}
\label{prop:P_bnds} There are constants $c,C>0$ such that for $t\in\cX$,
and $j\in\left\{ 0,1,2\right\} $, we have
\[
c\frac{H\left(t\right)\left(A\left(t\right)\right)^{j}}{\sqrt{B\left(t\right)}}\le R_{\ell}^{\left(j\right)}\left(t\right)\le C\frac{H\left(t\right)\left(A\left(t\right)\right)^{j}}{\sqrt{B\left(t\right)}},
\]
and
\[
\overline{R}_{\ell}^{\left[j\right]}\left(t\right)\le C\frac{H\left(t\right)\left(s_{\ell}\sqrt{B\left(t\right)}\right)^{j}}{\sqrt{B\left(t\right)}}.
\]
\end{prop}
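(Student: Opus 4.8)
The plan is to reduce everything to the Gaussian‐sum estimates of Claims \ref{clm:Gauss_sum_bnds} and \ref{clm:double_Gauss_bnds}, applied with the specific parameters $A=A(t)$, $B=B(t)$, $p=p_\ell$, $s=s_\ell$, once we know that the individual Taylor coefficients $a_n^2 e^{nt}$ occurring in $R_\ell^{(j)}(t)$ are well approximated by their ``Gaussian shape''. First I would invoke Lemma \ref{lem:coeff_asymp}: for $t$ large and $n$ in the window $\cI_\ell^{\mathbf p}(t)$ (so $\lvert n-A(t)\rvert\le s_\ell\sqrt{B(t)}$) we have
\[
a_n^2 e^{nt}=\frac{H(t)}{\sqrt{2\pi B(t)}}\Bigl[\exp\Bigl(-\tfrac{(n-A(t))^2}{2B(t)}\Bigr)+O\bigl(B^{-\veps}(t)\bigr)\Bigr],
\]
uniformly in $n$. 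Substituting this into the definition of $R_\ell^{(j)}(t)=\sum_{n\in\cI_\ell^{\mathbf p}(t)}n^j a_n^2 e^{nt}$ splits it into a main term
\[
\frac{H(t)}{\sqrt{2\pi B(t)}}\sum_{\substack{n\in\cI_\ell^{\mathbf p}(t)}} n^j\exp\Bigl(-\tfrac{(n-A(t))^2}{2B(t)}\Bigr)
\]
plus an error term of size $O\bigl(B^{-\veps}(t)\bigr)\cdot\frac{H(t)}{\sqrt{B(t)}}\cdot\#\cI_\ell^{\mathbf p}(t)\cdot A(t)^j$, using $n\le A(t)+s_\ell\sqrt{B(t)}=O(A(t))$ on the window (here one uses Assumption~\ref{enu:Assump_A_control_by_B-1}, $A\lesssim B^2$, to control $A(t)$ against powers of $B(t)$, and $s_\ell=O(\sqrt{\log\ell})$). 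Since the window has length $\le 2 s_\ell\sqrt{B(t)}$ and the spacing is $p_\ell$, we get $\#\cI_\ell^{\mathbf p}(t)\lesssim s_\ell\sqrt{B(t)}/p_\ell+1$; on $\longint{\ell}$ we have $p_\ell=\ell^3=\sqrt{B(t_\ell)}\asymp\sqrt{B(t)}$ (up to the bounded factor from \eqref{eq:bound_B_in_T_ell}), so $\#\cI_\ell^{\mathbf p}(t)\lesssim s_\ell$, hence the error contribution is $\lesssim s_\ell B^{-\veps}(t) H(t) A(t)^j/\sqrt{B(t)}$, which is negligible against the main term provided the main term is of order $H(t)A(t)^j/\sqrt{B(t)}$.

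Next I would evaluate the main sum. Because the summation runs over $n$ divisible by $p_\ell$, write $n=kp_\ell$; the condition $\lvert n-A(t)\rvert\le s_\ell\sqrt{B(t)}$ plus $n\in[A(t_\ell),A(t_{\ell+1}))$ becomes (for $t\in\longint{\ell}$, after harmless adjustment of constants, since $A$ is monotone and $\longint\ell$ keeps us away from the endpoints of $T_\ell$) exactly the range $\lvert kp_\ell-A(t)\rvert\le s_\ell\sqrt{B(t)}$ appearing in Claim~\ref{clm:Gauss_sum_bnds}. Applying that claim with $A=A(t)$, $B=B(t)$, $p=p_\ell\asymp\sqrt{B(t)}$, $s=s_\ell\ge1$ gives
\[
\frac{1}{\sqrt e}\bigl(A(t)-s_\ell\sqrt{B(t)}\bigr)^j\le\sum_{\lvert kp_\ell-A(t)\rvert\le s_\ell\sqrt{B(t)}}(kp_\ell)^j e^{-(kp_\ell-A(t))^2/2B(t)}\le e^2\Bigl[\tfrac{\sqrt{2\pi B(t)}}{p_\ell}\bigl(A(t)^j+B(t)\mathbf 1_{\{j=2\}}\bigr)+1\Bigr].
\]
Since $p_\ell\asymp\sqrt{B(t)}$, the right side is $\asymp A(t)^j+B(t)\mathbf 1_{\{j=2\}}+1$; and because $A(t)\to\infty$ with $B(t)=O(A(t))$ — recall $B=A'$ is dominated by $A$ (outside the exceptional set, by Remark~\ref{rmk:rosenbloom_a_b}, or here simply because $A\uparrow\infty$ forces $B\lesssim_L A$, indeed $B\le A^{1+\gamma}$), and actually we only need $B(t)\mathbf 1_{\{j=2\}}\lesssim A(t)^2$ which follows from $B\lesssim A^2$ trivially but we want the reverse bound too — the upper and lower bounds match up to constants, so the sum is $\asymp A(t)^j$. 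On the lower side one also checks $A(t)-s_\ell\sqrt{B(t)}\ge\frac12 A(t)$ for $t$ large, using $s_\ell\sqrt{B(t)}=o(A(t))$, which again follows from $\sqrt{B(t)}=o(A(t))$ (a consequence of $B\lesssim_L A^{1+\gamma}$ with $\gamma<1$). Multiplying by the prefactor $H(t)/\sqrt{2\pi B(t)}$ yields
\[
R_\ell^{(j)}(t)\asymp\frac{H(t) A(t)^j}{\sqrt{B(t)}},
\]
which is the first assertion, with explicit $c,C$ obtainable by chasing the constants $\sqrt e$, $e^2$ through.

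For the bound on $\overline R_\ell^{[j]}(t)=\sum_{n\in\cI_\ell^{\mathbf p}(t)}(n-A(t))^j a_n^2 e^{nt}$, I would proceed the same way but now with $(n-A(t))^j=(kp_\ell-A(t))^j$ in place of $(kp_\ell)^j$; on the window $\lvert n-A(t)\rvert\le s_\ell\sqrt{B(t)}$ we have $\lvert n-A(t)\rvert^j\le (s_\ell\sqrt{B(t)})^j$, so
\[
\overline R_\ell^{[j]}(t)\le (s_\ell\sqrt{B(t)})^j\sum_{n\in\cI_\ell^{\mathbf p}(t)}a_n^2 e^{nt}=(s_\ell\sqrt{B(t)})^j R_\ell^{(0)}(t)\le C\,\frac{H(t)(s_\ell\sqrt{B(t)})^j}{\sqrt{B(t)}},
\]
using the $j=0$ case already established. (One should be slightly careful with the sign when $j$ is odd, but since we only claim an upper bound this is fine; alternatively bound $\lvert\overline R_\ell^{[j]}\rvert$ by replacing $(n-A(t))^j$ with $\lvert n-A(t)\rvert^j$ throughout.)

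The main obstacle I anticipate is the bookkeeping in the first step: making sure that when $t$ ranges over the interior $\longint\ell$, the arithmetic-progression window $\{n\in\cI_\ell^{\mathbf p}:\lvert n-A(t)\rvert\le s_\ell\sqrt{B(t)}\}$ really coincides (up to losing an $O(1/\ell^2)$-fraction of terms, which is absorbed in the constants) with the unconstrained range $\lvert kp_\ell-A(t)\rvert\le s_\ell\sqrt{B(t)}$ of Claim~\ref{clm:Gauss_sum_bnds} — i.e.\ that the extra constraint $n\in[A(t_\ell),A(t_{\ell+1}))$ does not clip the window. This holds because for $t\in\longint\ell=[t_\ell+2/\ell^2,\,t_{\ell+1}-2/\ell^2]$ the distance from $A(t)$ to the endpoints $A(t_\ell),A(t_{\ell+1})$ is at least $\frac{2}{\ell^2}\cdot\min_{T_\ell}B\ge\frac{2}{\ell^2}\,\ell^6=2\ell^4$, while $s_\ell\sqrt{B(t)}\lesssim\sqrt{\log\ell}\cdot\ell^3=o(\ell^4)$; so for $\ell$ large the whole window sits strictly inside $[A(t_\ell),A(t_{\ell+1}))$ and no clipping occurs. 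Combined with $p_\ell=\sqrt{B(t_\ell)}\le\sqrt{B(t)}$ (so Claims~\ref{clm:Gauss_sum_bnds} and~\ref{clm:double_Gauss_bnds} apply) and the uniform coefficient asymptotics of Lemma~\ref{lem:coeff_asymp}, the proposition follows.
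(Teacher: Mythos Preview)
Your proposal is correct and follows essentially the same approach as the paper: substitute the coefficient asymptotics from Lemma~\ref{lem:coeff_asymp}, apply Claim~\ref{clm:Gauss_sum_bnds} with $p=p_\ell\asymp\sqrt{B(t)}$, and use \eqref{eq:b_bound_in_a} to compare powers of $A$ and $B$; for $\overline R_\ell^{[j]}$ both you and the paper use the trivial bound $|n-A(t)|^j\le (s_\ell\sqrt{B(t)})^j$ and reduce to the $j=0$ case. Your treatment is in fact more explicit than the paper's on two points the paper leaves implicit: the verification that for $t\in\longint{\ell}$ the window $|n-A(t)|\le s_\ell\sqrt{B(t)}$ sits entirely inside $[A(t_\ell),A(t_{\ell+1}))$, and the check that $s_\ell\sqrt{B(t)}=o(A(t))$ needed for the lower bound. (Your passing mention of Claim~\ref{clm:double_Gauss_bnds} is unnecessary here; it is only used later in Claim~\ref{clm:Sigma_1_bounds}.)
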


\begin{proof}
By Lemma \ref{lem:coeff_asymp}, Claim \ref{clm:Gauss_sum_bnds},
and (\ref{eq:b_bound_in_a}) we obtain
\begin{align*}
R_{\ell}^{\left(j\right)}\left(t\right) & =\sum_{\left|kp_{\ell}-A\right|<s_{\ell}\sqrt{B}}\left(kp\right)^{j}a_{kp}^{2}e^{kpt}\\&=\frac{H}{\sqrt{2\pi B}}\sum_{\left|kp_{\ell}-A\right|<s_{\ell}\sqrt{B}}\left(kp\right)^{j}\left[\exp\left(-\frac{\left(kp-A\right)^{2}}{2B}\right)+O\left(\frac{1}{B^{\varepsilon}}\right)\right]\\
 & \lesssim\frac{H}{\sqrt{B}}\left[\frac{\sqrt{B}}{p_{\ell}}\left(A^{j}+B\cdot\mathbf{1}_{\left\{ j=2\right\} }\right)+1+\frac{s_{\ell}\sqrt{B}}{p_{\ell}}\cdot\frac{\left(A+s_{\ell}\sqrt{B}\right)^{j}}{B^{\varepsilon}}\right]\\
 & \lesssim\frac{H}{p_{\ell}}\cdot A{}^{j}\left[1+s_{\ell}\cdot\frac{1}{B^{\varepsilon}}\right] \lesssim\frac{H\left(t\right)}{\sqrt{B\left(t\right)}}\cdot A\left(t\right){}^{j}\left[1+s_{\ell}\cdot\frac{1}{B^{\varepsilon}\left(t\right)}\right].
\end{align*}
The lower bound is obtained in a similar way, using the lower bound
in Claim \ref{clm:Gauss_sum_bnds}. To bound $\overline{R}_{\ell}^{\left[j\right]}\left(t\right)$,
we simply use
\[
\overline{R}_{\ell}^{\left[j\right]}\left(t\right)\le\sum_{n\in\cI_{\ell}^{\mathbf{p}}\left(t\right)}\left|n-A\left(t\right)\right|^{j}a_{n}^{2}e^{nt}\le\left(s_{\ell}\sqrt{B}\right)^{j}\overline{R}_{\ell}\left(t\right)\lesssim\frac{H\left(t\right)\left(s_{\ell}\sqrt{B}\right)^{j}}{\sqrt{B\left(t\right)}}.
\]
\end{proof}
Now we are ready to prove that $G$ and $\widetilde{G}$ are similar.
\begin{lem}
\label{lem:A_and_B_H_tilde_new} We have
\[
a_{\widetilde{H}}\left(r\right)\asymp_{L}a\left(r\right),\qquad b_{\widetilde{H}}\left(r\right)\asymp_{L}b\left(r\right).
\]
\end{lem}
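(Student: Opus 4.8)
The plan is to establish the two-sided bounds $A_{\widetilde H}(t)\asymp A(t)$ and $B_{\widetilde H}(t)\asymp B(t)$ for all $t\in\cX$, and then translate back to the variable $r$ via $e^{t}=r^{2}$, where a set of finite Lebesgue measure in $t$ corresponds to one of finite logarithmic measure in $r$ (recall $\bbr^{+}\setminus\cX$ has finite Lebesgue measure). Write $\widetilde H(t)=\sum_{n}\delta_{n}a_{n}^{2}e^{nt}$ and recall the identities
\[
A_{\widetilde H}=\frac{\widetilde H'}{\widetilde H},\qquad B_{\widetilde H}=\frac{\widetilde H''\widetilde H-(\widetilde H')^{2}}{\widetilde H^{2}}=\frac{\mathcal S}{\widetilde H^{2}},\qquad\mathcal S(t)\eqdef\frac12\sum_{n,m}\delta_{n}\delta_{m}(n-m)^{2}a_{n}^{2}a_{m}^{2}e^{(n+m)t}.
\]
Thus it suffices to show, for $t\in\cX$, that $\widetilde H(t)\asymp H(t)/\sqrt{B(t)}$, $\widetilde H'(t)\asymp H(t)A(t)/\sqrt{B(t)}$, and $\mathcal S(t)\asymp H(t)^{2}$; dividing then gives $A_{\widetilde H}\asymp A$ and $B_{\widetilde H}\asymp B$.

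The first step is a localization statement: fix $t\in\longint{\ell}\subset\cX$; then every index $n$ with $\delta_{n}=1$ that does \emph{not} lie in the window $\cI_{\ell}^{\mathbf{p}}(t)$ actually satisfies $|n-A(t)|>s_{\ell}\sqrt{B(t)}$. Indeed, such an $n$ either lies in $\cI_{\ell}^{\mathbf{p}}\setminus\cI_{\ell}^{\mathbf{p}}(t)$, where this is the definition, or in some $\cI_{\ell'}^{\mathbf{p}}$ with $\ell'\ne\ell$, hence below $A(t_{\ell})$ (if $\ell'<\ell$) or above $A(t_{\ell+1})$ (if $\ell'>\ell$); since $B$ is non-decreasing with $B(t_{\ell})=\ell^{6}$, the function $A$ grows by at least $\tfrac{2}{\ell^{2}}\ell^{6}=2\ell^{4}$ across each margin $[t_{\ell},t_{\ell}+\tfrac{2}{\ell^{2}}]$ and $[t_{\ell+1}-\tfrac{2}{\ell^{2}},t_{\ell+1}]$, so $A(t)\in[A(t_{\ell})+2\ell^{4},A(t_{\ell+1})-2\ell^{4}]$, whereas $s_{\ell}\sqrt{B(t)}=O(\sqrt{\log\ell}\,\ell^{3})=o(\ell^{4})$; this settles it for large $\ell$. (The same bounds show $\cI_{\ell}^{\mathbf{p}}(t)$ is a full arithmetic progression of step $p_{\ell}\le\sqrt{B(t)}$ lying inside $\cI_{\ell}$, so Claim \ref{clm:double_Gauss_bnds} will apply to it.) Consequently, for $j\in\{0,1\}$, $\widetilde H^{(j)}(t)=R_{\ell}^{(j)}(t)+E^{(j)}(t)$ where the remainder $E^{(j)}(t)$ collects the indices outside the window and satisfies $0\le E^{(j)}(t)\le Q_{\ell}^{(j)}(t,t)$; likewise $\mathcal S(t)=\mathcal S_{\mathrm{main}}(t)+\bigl(\mathcal S(t)-\mathcal S_{\mathrm{main}}(t)\bigr)$ with $\mathcal S_{\mathrm{main}}(t)=\tfrac12\sum_{n,m\in\cI_{\ell}^{\mathbf{p}}(t)}(n-m)^{2}a_{n}^{2}a_{m}^{2}e^{(n+m)t}$, and, bounding $(n-m)^{2}\le 2(n^{2}+m^{2})$ and dropping the $\delta$'s, $0\le\mathcal S(t)-\mathcal S_{\mathrm{main}}(t)\le 2\bigl[Q_{\ell}^{(2)}(t,t)H(t)+Q_{\ell}^{(0)}(t,t)H''(t)\bigr]$.

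Now insert the estimates. By Proposition \ref{prop:P_bnds}, $R_{\ell}^{(j)}(t)\asymp H(t)A(t)^{j}/\sqrt{B(t)}$; by Proposition \ref{prop:Q_bnds} (with the exponent $3$ replaced by a large fixed $M$, which is permissible by the remark following it) $Q_{\ell}^{(j)}(t,t)\le H(t)/B(t)^{M}$, and $H''(t)=H(t)(A(t)^{2}+B(t))=O(H(t)B(t)^{4})$ by Assumption \ref{enu:Assump_A_control_by_B-1}; taking $M\ge 6$ makes $E^{(j)}$ and $\mathcal S-\mathcal S_{\mathrm{main}}$ of strictly smaller order than $R_{\ell}^{(j)}$ and $H^{2}$ respectively. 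This already gives $\widetilde H\asymp H/\sqrt{B}$ and $\widetilde H'\asymp HA/\sqrt{B}$, hence $A_{\widetilde H}=\widetilde H'/\widetilde H\asymp A$. For $\mathcal S_{\mathrm{main}}$ apply Lemma \ref{lem:coeff_asymp}: $a_{n}^{2}e^{nt}=\frac{H(t)}{\sqrt{2\pi B(t)}}\bigl(e^{-(n-A(t))^{2}/(2B(t))}+O(B(t)^{-\veps})\bigr)$. Since $\cI_{\ell}^{\mathbf{p}}(t)$ has $O(s_{\ell})$ terms on which $(n-m)^{2}\le 4s_{\ell}^{2}B(t)$, replacing each factor by its Gaussian costs only $O(s_{\ell}^{4}B(t)^{1-\veps})=o(B(t))$, while the main double sum $\sum_{n,m\in\cI_{\ell}^{\mathbf{p}}(t)}(n-m)^{2}e^{-(n-A)^{2}/(2B)}e^{-(m-A)^{2}/(2B)}$ is exactly the quantity bounded below by $p_{\ell}^{2}/e$ and above by $24e^{4}\pi B(t)^{2}/p_{\ell}^{2}$ in Claim \ref{clm:double_Gauss_bnds}. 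Because $p_{\ell}^{2}=B(t_{\ell})\asymp B(t)$ throughout $T_{\ell}$, both bounds are $\asymp B(t)$, so $\mathcal S_{\mathrm{main}}\asymp\frac{H(t)^{2}}{B(t)}\cdot B(t)=H(t)^{2}$; together with the negligible remainder this gives $\mathcal S\asymp H^{2}$, and therefore $B_{\widetilde H}=\mathcal S/\widetilde H^{2}\asymp H^{2}/(H^{2}/B)=B$.

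The delicate point is the cancellation in $B_{\widetilde H}=(\widetilde H''\widetilde H-(\widetilde H')^{2})/\widetilde H^{2}$: the two pieces $\widetilde H''\widetilde H$ and $(\widetilde H')^{2}$ are each of size $(HA/\sqrt B)^{2}$, vastly larger than their difference $\mathcal S\asymp H^{2}$, so they cannot be estimated separately — one must keep $\mathcal S$ as a single manifestly non-negative quadratic form and read off its order $B$ from the \emph{two-sided} Gaussian sum bound. This is precisely where the choice $p_{\ell}=\sqrt{B(t_{\ell})}$ is used: it keeps $p_{\ell}^{2}$ comparable to $B(t)$ for every $t\in T_{\ell}$, so that the lower bound $p_{\ell}^{2}/e$ and the upper bound $24e^{4}\pi B(t)^{2}/p_{\ell}^{2}$ in Claim \ref{clm:double_Gauss_bnds} match up to absolute constants. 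The remaining bookkeeping — the Gaussian approximation error of Lemma \ref{lem:coeff_asymp}, the tails of Proposition \ref{prop:Q_bnds}, and the contributions of the polynomials $P_{\ell'}$ with $\ell'\ne\ell$ — is routine, controlled by the slow growth $s_{\ell}=O(\sqrt{\log\ell})$ and by taking $M$ as large as needed.
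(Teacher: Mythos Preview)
Your proof is correct and follows essentially the same route as the paper: reduce to $t\in\cX$, write $A_{\widetilde H}=\widetilde H'/\widetilde H$ and $B_{\widetilde H}=\mathcal S/\widetilde H^{2}$ with $\mathcal S$ the positive quadratic form, split into the window $\cI_{\ell}^{\mathbf p}(t)$ plus a tail dominated by $Q_{\ell}^{(j)}$, and handle the main term via Lemma~\ref{lem:coeff_asymp} and Claim~\ref{clm:double_Gauss_bnds}. Two small stylistic deviations are worth noting: you make the localization step (that every $n$ with $\delta_n=1$ and $n\notin\cI_{\ell}^{\mathbf p}(t)$ lies outside the $s_{\ell}\sqrt{B(t)}$ window) explicit, which the paper uses tacitly in writing $|\widetilde H^{(j)}-R_{\ell}^{(j)}|\le Q_{\ell}^{(j)}$; and you bound the off-window part of $\mathcal S$ via $(n-m)^{2}\le 2(n^{2}+m^{2})$ and the uncentered $Q_{\ell}^{(j)}$ with a larger exponent $M$, whereas the paper uses the centered inequality $(n-m)^{2}\le\tfrac12\bigl((n-A)^{2}+(m-A)^{2}\bigr)$ together with $\overline Q_{\ell}^{[j]}$ and $\overline R_{\ell}^{[j]}$, which lets it get by with the stated exponent $3$. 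Both arguments are valid.
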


\begin{proof}
Let $c,C>0$ denote constants. Fix $t\in\cX$, it is sufficient to
show that
\[
cA\left(t\right)\le A_{\widetilde{H}}\left(t\right)\le CA\left(t\right),\qquad cB\left(t\right)\le B_{\widetilde{H}}\left(t\right)\le CB\left(t\right).
\]
It follows from the identities
\[
A_{\widetilde{H}}\left(t\right)=\frac{\widetilde{H}^{\prime}\left(t\right)}{\widetilde{H}\left(t\right)},\quad B_{\widetilde{H}}\left(t\right)=A_{\widetilde{H}}^{\prime}\left(t\right)=\frac{\widetilde{H}^{\prime\prime}\left(t\right)}{\widetilde{H}\left(t\right)}-\left(\frac{\widetilde{H}^{\prime}\left(t\right)}{\widetilde{H}\left(t\right)}\right)^{2},
\]
and the definition of $\widetilde{H}$ in (\ref{eq:G_tilde_def}),
that
\[
A_{\widetilde{H}}\left(t\right)=\frac{\sum_{n=0}^{\infty}n\delta_{n}a_{n}^{2}e^{nt}}{\widetilde{H}\left(t\right)},\qquad B_{\widetilde{H}}\left(t\right)=\frac{\sum_{n,m=0}^{\infty}\left(n-m\right)^{2}\delta_{n}\delta_{m}a_{n}^{2}a_{m}^{2}e^{\left(n+m\right)t}}{2\left(\widetilde{H}\left(t\right)\right)^{2}}.
\]
Notice that for $j\in\left\{ 0,1,2\right\} $ we have
\begin{equation}
\left|\widetilde{H}^{\left(j\right)}\left(t\right)-R_{\ell}^{\left(j\right)}\left(t\right)\right|\leq Q_{\ell}^{\left(j\right)}\left(t,t\right).\label{eq:H_tilde_j_eq}
\end{equation}
Therefore, by Propositions \ref{prop:Q_bnds} and \ref{prop:P_bnds}
we have
\[
A_{\widetilde{H}}\left(t\right)=\text{\ensuremath{\frac{R_{\ell}^{\prime}\left(t\right)+\left(H^{\prime}\left(t\right)-R_{\ell}^{\prime}\left(t\right)\right)}{R_{\ell}\left(t\right)+\left(H\left(t\right)-R_{\ell}\left(t\right)\right)}\lesssim\frac{C\frac{H\left(t\right)A\left(t\right)}{\sqrt{B\left(t\right)}}+\frac{H\left(t\right)}{B^{3}\left(t\right)}}{c\frac{H\left(t\right)}{\sqrt{B\left(t\right)}}-\frac{H\left(t\right)}{B^{3}\left(t\right)}}\lesssim A\left(t\right),}}
\]
and similarly for the lower bound. Put
\[
\Sigma_{1}\eqdef\sum_{n,m\in\cI_{\ell}^{\mathbf{p}}\left(t\right)}\left(n-m\right)^{2}a_{n}^{2}a_{m}^{2}e^{\left(n+m\right)t},
\]
and notice that clearly
\[
\Sigma_{1}\le2B_{\widetilde{H}}\left(t\right)\left(\widetilde{H}\left(t\right)\right)^{2}.
\]
In addition,
\begin{align*}
2B_{\widetilde{H}}\left(t\right)\left(\widetilde{H}\left(t\right)\right)^{2} & =\sum_{n,m=0}^{\infty}\left(n-m\right)^{2}\delta_{n}\delta_{m}a_{n}^{2}a_{m}^{2}e^{\left(n+m\right)t}\le\sum_{n,m\in\cI_{\ell}^{\mathbf{p}}\left(t\right)}\left(n-m\right)^{2}a_{n}^{2}a_{m}^{2}e^{\left(n+m\right)t}\\
 & +2\cdot\sum_{n\in\cI_{\ell}^{\mathbf{p}}\left(t\right),\,\left|m-A\left(t\right)\right|>s_{\ell}\sqrt{B\left(t\right)}}\left(n-m\right)^{2}a_{n}^{2}a_{m}^{2}e^{\left(n+m\right)t}\\
 & +\sum_{\begin{array}{c}
\left|n-A\left(t\right)\right|>s_{\ell}\sqrt{B\left(t\right)}\\
\left|m-A\left(t\right)\right|>s_{\ell}\sqrt{B\left(t\right)}
\end{array}}\left(n-m\right)^{2}a_{n}^{2}a_{m}^{2}e^{\left(n+m\right)t}\\
 & \defeq\Sigma_{1}+\Sigma_{2}+\Sigma_{3}.
\end{align*}
By Claim \ref{clm:Sigma_1_bounds} below, we have $cH^{2}\left(t\right)\leq\Sigma_{1}\leq CH^{2}\left(t\right),$
so that again by (\ref{eq:H_tilde_j_eq}) and Propositions \ref{prop:Q_bnds}
and \ref{prop:P_bnds} we have
\[
B\left(t\right)\widetilde{H}^{2}\left(t\right)\lesssim\Sigma_{1}\lesssim B\left(t\right)\widetilde{H}^{2}\left(t\right).
\]
Writing
\[
\left(n-m\right)^{2}\le\frac{1}{2}\left(\left(n-A\right)^{2}+\left(m-A\right)^{2}\right),
\]
we get, using Proposition \ref{prop:Q_bnds},
\begin{align*}
\Sigma_{3} & \le\left(\sum_{\left|n-A\left(t\right)\right|>s_{\ell}\sqrt{B\left(t\right)}}\left(n-A\right)^{2}a_{n}^{2}e^{nt}\right)\left(\sum_{\left|m-A\left(t\right)\right|>s_{\ell}\sqrt{B\left(t\right)}}a_{m}^{2}e^{mt}\right)=\overline{Q}_{\ell}^{\left[2\right]}\left(t,t\right)\overline{Q}_{\ell}^{\left[0\right]}\left(t,t\right)\\
 & \le\frac{H^{2}\left(t\right)}{B^{6}\left(t\right)}\lesssim\frac{\widetilde{H}^{2}\left(t\right)}{B^{5}\left(t\right)}.
\end{align*}
In addition, combining Propositions \ref{prop:Q_bnds} and \ref{prop:P_bnds},
we have
\begin{align*}
\Sigma_{2}\le & \left(\sum_{n\in\cI_{\ell}^{\mathbf{p}}\left(t\right)}\left(n-A\right)^{2}a_{n}^{2}e^{nt}\right)\left(\sum_{\left|m-A\left(t\right)\right|>s_{\ell}\sqrt{B\left(t\right)}}a_{m}^{2}e^{mt}\right)\\
 & +\left(\sum_{n\in\cI_{\ell}^{\mathbf{p}}\left(t\right)}a_{n}^{2}e^{nt}\right)\left(\sum_{\left|m-A\left(t\right)\right|>s_{\ell}\sqrt{B\left(t\right)}}\left(m-A\right)^{2}a_{m}^{2}e^{mt}\right)\\
= & \overline{R}_{\ell}^{\left[2\right]}\left(t\right)\overline{Q}_{\ell}^{\left[0\right]}\left(t,t\right)+\overline{R}_{\ell}^{\left[0\right]}\left(t\right)\overline{Q}_{\ell}^{\left[2\right]}\left(t,t\right)\\
\lesssim & \frac{H\left(t\right)\cdot s_{\ell}^{2}B\left(t\right)}{\sqrt{B\left(t\right)}}\cdot\frac{H\left(t\right)}{B\left(t\right)^{3}}+\frac{H\left(t\right)}{\sqrt{B\left(t\right)}}\cdot\frac{H\left(t\right)}{B\left(t\right)^{3}}\\
\lesssim & \frac{H^{2}\left(t\right)}{B^{2}\left(t\right)}\lesssim\frac{\widetilde{H}^{2}\left(t\right)}{B\left(t\right)}.
\end{align*}
Thus,
\[
B\left(t\right)\lesssim B_{\widetilde{H}}\left(t\right)\lesssim B\left(t\right).
\]
\end{proof}
\begin{claim}
\label{clm:Sigma_1_bounds} There are constants $c,C>0$, so that
for $t\in\cX$, we have
\[
cH^{2}\left(t\right)\leq\Sigma_{1}\leq CH^{2}\left(t\right),
\]
where
\[
\Sigma_{1}\eqdef\sum_{n,m\in\cI_{\ell}^{\mathbf{p}}\left(t\right)}\left(n-m\right)^{2}a_{n}^{2}a_{m}^{2}e^{\left(n+m\right)t}.
\]
\end{claim}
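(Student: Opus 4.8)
The plan is to substitute the coefficient asymptotics of Lemma~\ref{lem:coeff_asymp} into $\Sigma_{1}$ and reduce everything to the two elementary Gaussian sums of Claims~\ref{clm:Gauss_sum_bnds} and~\ref{clm:double_Gauss_bnds}. Fix $t\in\longint{\ell}\subset\cX$ with $\ell$ large (we may drop finitely many $\longint{\ell}$, as usual). First I would record two facts about the geometry of the window. Since $B\uparrow\infty$, for $t\in\longint{\ell}$ we have $A(t_{\ell+1})-A(t)=\int_{t}^{t_{\ell+1}}B\ge B(t_{\ell})(t_{\ell+1}-t)\ge\ell^{6}\cdot\tfrac{2}{\ell^{2}}=2\ell^{4}$, and likewise $A(t)-A(t_{\ell})\ge2\ell^{4}$, whereas $s_{\ell}\sqrt{B(t)}\le 2c_{1}\ell^{3}\sqrt{\log\ell}=o(\ell^{4})$; hence the interval $\cI_{\ell}=[A(t_{\ell}),A(t_{\ell+1}))$ imposes no restriction inside the window, i.e. $\cI_{\ell}^{\mathbf p}(t)=\{kp_{\ell}:\ |kp_{\ell}-A(t)|\le s_{\ell}\sqrt{B(t)}\}$. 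Secondly, by (\ref{eq:bound_B_in_T_ell}) we have $p_{\ell}^{2}=B(t_{\ell})\le B(t)\le4B(t_{\ell})=4p_{\ell}^{2}$, so $\tfrac14B(t)\le p_{\ell}^{2}\le B(t)$; in particular $p_{\ell}\le\sqrt{B(t)}$ and $\sqrt{B(t)}/p_{\ell}\le2$, and moreover $s_{\ell}=c_{1}\sqrt{\log\ell}\asymp\sqrt{\log B(t)}$.

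Next I would write, by Lemma~\ref{lem:coeff_asymp}, $a_{n}^{2}e^{nt}=\frac{H(t)}{\sqrt{2\pi B(t)}}\bigl(E_{n}+\rho_{n}\bigr)$ with $E_{n}=\exp\!\bigl(-\tfrac{(n-A(t))^{2}}{2B(t)}\bigr)$ and $|\rho_{n}|\le CB^{-\varepsilon}(t)$ uniformly in $n$, so that expanding $(E_{n}+\rho_{n})(E_{m}+\rho_{m})$ gives
\[
\Sigma_{1}=\frac{H^{2}(t)}{2\pi B(t)}\Bigl(\Sigma_{EE}+\Sigma_{\mathrm{err}}\Bigr),\qquad\Sigma_{EE}=\sum_{n,m\in\cI_{\ell}^{\mathbf p}(t)}(n-m)^{2}E_{n}E_{m},
\]
with $\Sigma_{\mathrm{err}}$ collecting the three sums in which at least one factor $\rho$ appears. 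By the previous paragraph $\Sigma_{EE}$ is precisely the double sum of Claim~\ref{clm:double_Gauss_bnds} (with $A=A(t)$, $B=B(t)$, $p=p_{\ell}$, $s=s_{\ell}$; the hypotheses $s_{\ell}\ge1$ and $p_{\ell}\le\sqrt{B(t)}$ hold), so $p_{\ell}^{2}/e\le\Sigma_{EE}\le24e^{4}\pi B^{2}(t)/p_{\ell}^{2}$, and $\tfrac14B(t)\le p_{\ell}^{2}\le B(t)$ converts this into $\tfrac1{4e}B(t)\le\Sigma_{EE}\le96e^{4}\pi B(t)$. For $\Sigma_{\mathrm{err}}$, on the window $(n-m)^{2}\le4s_{\ell}^{2}B(t)$ and the number of admissible $m$ is at most $\tfrac{2s_{\ell}\sqrt{B(t)}}{p_{\ell}}+1\le5s_{\ell}$, so $\sum_{m}(n-m)^{2}\lesssim s_{\ell}^{3}B(t)$, while Claim~\ref{clm:Gauss_sum_bnds} with $j=0$ gives $\sum_{n}E_{n}\le e^{2}\bigl(\tfrac{\sqrt{2\pi B(t)}}{p_{\ell}}+1\bigr)\lesssim1$; hence $|\Sigma_{\mathrm{err}}|\lesssim s_{\ell}^{3}B^{1-\varepsilon}(t)+s_{\ell}^{4}B^{1-2\varepsilon}(t)$, which is $o(B(t))$ because $s_{\ell}\asymp\sqrt{\log B(t)}$. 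Therefore $\Sigma_{EE}+\Sigma_{\mathrm{err}}$ lies between $\tfrac1{8e}B(t)$ and $97e^{4}\pi B(t)$ once $\ell$ is large, and multiplying by $H^{2}(t)/(2\pi B(t))$ yields absolute constants $c,C>0$ with $cH^{2}(t)\le\Sigma_{1}\le CH^{2}(t)$.

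The only genuine obstacle is the error bookkeeping in the second step: the coefficient error $O(B^{-\varepsilon})$ is multiplied by the weight $(n-m)^{2}$, which on the window is as large as $\asymp s_{\ell}^{2}B(t)$, and summed over the $\asymp s_{\ell}$ surviving lattice points, so one must verify it still loses to the genuine Gaussian mass $\Sigma_{EE}\asymp B(t)$. This is exactly where it matters that $s_{\ell}$ grows only polylogarithmically in $B(t)$ and that $p_{\ell}$ is comparable to $\sqrt{B(t)}$, so that the sparsification leaves $\asymp s_{\ell}$ rather than $\asymp s_{\ell}\sqrt{B(t)}$ surviving indices, and also that $p_{\ell}^{2}\asymp B(t)$, which turns the two-sided bound $p_{\ell}^{2}\lesssim\Sigma_{EE}\lesssim B^{2}(t)/p_{\ell}^{2}$ into $\Sigma_{EE}\asymp B(t)$. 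Granting these points, the claim follows directly from Claims~\ref{clm:Gauss_sum_bnds} and~\ref{clm:double_Gauss_bnds}.
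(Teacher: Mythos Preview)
Your argument is correct and follows the same route as the paper: substitute the coefficient asymptotics from Lemma~\ref{lem:coeff_asymp}, isolate the double Gaussian sum handled by Claim~\ref{clm:double_Gauss_bnds}, and check that the $O(B^{-\varepsilon})$ cross terms are negligible using the polylogarithmic size of $s_{\ell}$ and the fact that only $\asymp s_{\ell}$ lattice points survive. The paper's proof is terser and bundles all three error contributions into a single sum $S_{2}=\tfrac{H^{2}}{B^{1+\varepsilon}}\sum(n-m)^{2}$; you are more explicit in verifying that the ambient constraint $n\in\cI_{\ell}$ is inactive on the window and that $p_{\ell}^{2}\asymp B(t)$, both of which the paper uses silently when invoking Claim~\ref{clm:double_Gauss_bnds} and when reading off $S_{1}\asymp H^{2}$.
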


\begin{proof}
By Lemma \ref{lem:coeff_asymp} we have
\[
\Sigma_{1}=\frac{H^{2}}{2\pi B}\sum_{n,m\in\cI_{\ell}^{\mathbf{p}}\left(t\right)}\left(n-m\right)^{2}\left(e^{-\frac{\left(n-A\right)^{2}}{2B}}+O\left(\frac{1}{B^{\varepsilon}}\right)\right)\left(e^{-\frac{\left(m-A\right)^{2}}{2B}}+O\left(\frac{1}{B^{\varepsilon}}\right)\right),
\]
and therefore
\[
\left|\Sigma_{1}-S_{1}\right|\le S_{2},
\]
where
\begin{align*}
S_{1} & \eqdef\frac{H^{2}}{B}\sum_{n,m\in\cI_{\ell}^{\mathbf{p}}\left(t\right)}\left(n-m\right)^{2}\exp\left(-\frac{\left(n-A\right)^{2}}{2B}-\frac{\left(m-A\right)^{2}}{2B}\right),\\
S_{2} & \eqdef\frac{H^{2}}{B^{1+\varepsilon}}\sum_{n,m\in\cI_{\ell}^{\mathbf{p}}\left(t\right)}\left(n-m\right)^{2}.
\end{align*}
By Claim \ref{clm:double_Gauss_bnds}, we have
\[
S_{1}\asymp\frac{H^{2}}{B}\cdot B=H^{2}.
\]
For $n,m\in\cI_{\ell}^{\mathbf{p}}\left(t\right)$, $\left|n-m\right|\leq2s_{\ell}\sqrt{B},$
thus we have
\[
S_{2}\lesssim\frac{H^{2}}{B^{\varepsilon}}\sum_{n,m\in\cI_{\ell}^{\mathbf{p}}\left(t\right)}s_{\ell}^{2}\lesssim\text{\ensuremath{\frac{H^{2}}{B^{\varepsilon}}}}\cdot s_{\ell}^{4}=o\left(H^{2}\right).
\]
\end{proof}

\subsection{Proof of Theorem~\ref{thm:sharpness_upper_bound_example}}

Let
\[
\widetilde{f}\left(z\right)=\sum_{\ell=0}^{\infty}\sum_{n\in\cI_{\ell}^{\mathbf{p}}}\xi_{n}a_{n}z^{n}
\]
be a Gaussian entire function with covariance function $\widetilde{G}$.
In order to prove that $\var{n\left(r\right)}\gtrsim p_{\ell}^{2}\gtrsim B\left(t\right)$
it is enough to show that $n\left(r\right)=kp_{\ell}$ for two different
values of $k$, with probability at least $c>0$ each. By Rouché's
theorem, it is enough to show that the term $\xi_{kp_{\ell}}a_{kp_{\ell}}z^{kp_{\ell}}$
dominates all other terms.
\begin{prop}
There is a constant $c>0$, so that for every $t\in\cX$ the probability
of the event $\left\{ n_{\widetilde{f}}\left(r\right)=kp_{\ell}\right\} $
is at least $c$ for two different values $k\in\bbn$.
\end{prop}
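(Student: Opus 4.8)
The plan is to fix $t\in\cX$ (so $t\in\longint{\ell}$ for some $\ell$, which we may take large, discarding finitely many intervals from $\cX$), write $r^{2}=e^{t}$, and exhibit two integers $k$ for which the single monomial $\xi_{kp_{\ell}}a_{kp_{\ell}}z^{kp_{\ell}}$ dominates, on the circle $\left|z\right|=r$, the sum of all the other terms of $\widetilde{f}$ with probability bounded below; Rouch\'e's theorem then forces $n_{\widetilde{f}}(r)=kp_{\ell}$.

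\emph{Choosing $k$.} I will take $k\in\{j^{*},j^{*}+1\}$, where $j^{*}$ is the integer nearest to $A(t)/p_{\ell}$. Since $p_{\ell}^{2}=\ell^{6}=B(t_{\ell})\le B(t)$ one has $\left|kp_{\ell}-A(t)\right|\le\tfrac{3}{2}\sqrt{B(t)}$, and because $t$ lies in the interior of a long interval (where $A'=B\ge\ell^{6}$) one checks that $A(t_{\ell})<kp_{\ell}<A(t_{\ell+1})$ and $\left|kp_{\ell}-A(t)\right|\ll s_{\ell}\sqrt{B(t)}$, so $kp_{\ell}\in\cI_{\ell}^{\mathbf{p}}(t)$ and in particular $\delta_{kp_{\ell}}=1$. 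Lemma \ref{lem:coeff_asymp} then gives $a_{kp_{\ell}}^{2}e^{kp_{\ell}t}\ge\tfrac{H(t)}{\sqrt{2\pi B(t)}}\bigl(e^{-9/8}-O(B^{-\varepsilon}(t))\bigr)$, while Proposition \ref{prop:P_bnds} and (\ref{eq:H_tilde_j_eq}) give $\widetilde{H}(t)=R_{\ell}(t)+O(H/B^{3})\le C\,H(t)/\sqrt{B(t)}$; hence $a_{kp_{\ell}}^{2}e^{kp_{\ell}t}\ge c_{4}\widetilde{H}(t)$ with $c_{4}>0$ for all large $\ell$, so also $a_{kp_{\ell}}>0$.

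\emph{Dominating the remainder.} Let $g=\widetilde{f}-\xi_{kp_{\ell}}a_{kp_{\ell}}z^{kp_{\ell}}$ (independent of $\xi_{kp_{\ell}}$) and split $g=g_{\mathrm{win}}+g_{\mathrm{far}}$, where $g_{\mathrm{win}}$ gathers the terms with $n\in\cI_{\ell}^{\mathbf{p}}(t)\setminus\{kp_{\ell}\}$ and $g_{\mathrm{far}}$ the rest. For $g_{\mathrm{win}}$ I would use the triangle inequality $\sup_{\left|z\right|=r}\left|g_{\mathrm{win}}\right|\le\sum|\xi_{n}|a_{n}r^{n}$, note $a_{n}r^{n}=\sqrt{a_{n}^{2}e^{nt}}\lesssim\sqrt{H/\sqrt{B}}\bigl(e^{-(n-A)^{2}/(4B)}+B^{-\varepsilon/2}\bigr)$ by Lemma \ref{lem:coeff_asymp}, and observe that the Gaussian factors, summed over the $\asymp\sqrt{\log\ell}$ multiples of $p_{\ell}$ in the window (where $\sqrt{B(t)}\asymp p_{\ell}$), contribute only an $O(1)$ amount; this gives $\mathbb{E}\sup_{\left|z\right|=r}\left|g_{\mathrm{win}}\right|\le C_{3}'\sqrt{\widetilde{H}(t)}$. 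For $g_{\mathrm{far}}$ I would first check (for $\ell$ large) that every one of its nonzero terms has $\left|n-A(t)\right|>s_{\ell}\sqrt{B(t)}$, so that $\sum|\mathrm{coeff}|^{2}r^{2n}$ and the corresponding derivative sum are both $\le H(t)/B^{3}(t)$ by Proposition \ref{prop:Q_bnds}, and then pass from $\left|z\right|=r$ to $\left|z\right|=re^{\eta/\sqrt{B(t)}}$ by a subharmonicity/mean-value estimate to obtain $\mathbb{E}\sup_{\left|z\right|=r}\left|g_{\mathrm{far}}\right|\lesssim\sqrt{H(t)}/B(t)=o(\sqrt{\widetilde{H}(t)})$. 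Combining the two pieces, $\mathbb{E}\sup_{\left|z\right|=r}\left|g\right|\le C_{3}\sqrt{\widetilde{H}(t)}$.

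\emph{Concluding.} By Markov, $\sup_{\left|z\right|=r}\left|g\right|\le 2C_{3}\sqrt{\widetilde{H}(t)}$ with probability $\ge\tfrac{1}{2}$, independently of $\xi_{kp_{\ell}}$; taking $M=2C_{3}/\sqrt{c_{4}}+1$, on the intersection with $\{|\xi_{kp_{\ell}}|\ge M\}$ (an event of probability $\ge\tfrac{1}{2}e^{-M^{2}}=:c>0$) we get $\left|\xi_{kp_{\ell}}a_{kp_{\ell}}z^{kp_{\ell}}\right|=|\xi_{kp_{\ell}}|\sqrt{a_{kp_{\ell}}^{2}e^{kp_{\ell}t}}\ge M\sqrt{c_{4}}\sqrt{\widetilde{H}(t)}>\sup_{\left|z\right|=r}\left|g\right|\ge\left|g(z)\right|$ on $\left|z\right|=r$, so Rouch\'e yields $n_{\widetilde{f}}(r)=kp_{\ell}$. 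Since $c$ does not depend on $t$ and $k$ runs over the two values $j^{*}$ and $j^{*}+1$, this proves the proposition. The hard part is the estimate $\mathbb{E}\sup_{\left|z\right|=r}\left|g\right|=O(\sqrt{\widetilde{H}(t)})$ \emph{with an absolute constant}: both the Gaussian decay of the window coefficients (Lemma \ref{lem:coeff_asymp}), which keeps the $\ell^{1}$-sum of their square roots bounded despite there being $\sim\sqrt{\log\ell}$ of them, and the super-polynomial tail bound (Proposition \ref{prop:Q_bnds}) are needed to avoid a spurious factor $\sqrt{\log\ell}$, which would only give $c\sim\ell^{-c^{2}}$.
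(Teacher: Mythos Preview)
Your overall strategy is exactly the paper's: pick two multiples of $p_\ell$ close to $A(t)$, show each dominant monomial beats the rest of $\widetilde f$ on $|z|=r$ with uniformly positive probability, and apply Rouch\'e. The window bound $\mathbb{E}\sup_{|z|=r}|g_{\mathrm{win}}|\le C_3'\sqrt{\widetilde H(t)}$ is correct, for the reason you give: the multiples of $p_\ell$ in the window are spaced $\asymp\sqrt{B(t)}$, so the sum of Gaussian weights $\sum e^{-(kp_\ell-A)^2/(4B)}$ is $O(1)$ (this is Claim~\ref{clm:Gauss_sum_bnds}), and no $\sqrt{\log\ell}$ factor appears.

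The far-term bound, however, has a gap in the choice of radius. Passing to the larger circle $\rho=re^{\eta/\sqrt{B}}$ and using the Poisson/Cauchy inequality
\[
\sup_{|z|=r}|g_{\mathrm{far}}(z)|^2\le\frac{\rho^2}{\rho^2-r^2}\cdot\frac{1}{2\pi}\int_0^{2\pi}|g_{\mathrm{far}}(\rho e^{i\varphi})|^2\,\dd\varphi
\]
gives, after taking expectations and applying Proposition~\ref{prop:Q_bnds} at $\tau=t+2\eta/\sqrt{B}$, a bound of order $\frac{\sqrt B}{\eta}\cdot\frac{H(\tau)}{B^3}$. The trouble is the factor $H(\tau)/H(t)\approx e^{2\eta A/\sqrt{B}}$ coming from (\ref{eq:gen_func_est}); since type~II admissibility only guarantees $A=O(B^2)$, this can be as large as $e^{cB^{3/2}}$ for any fixed $\eta>0$, swamping the $B^{-3}$ you gained. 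The paper sidesteps this by shifting by $1/A$ rather than $1/\sqrt{B}$: it bounds $\sup_{|z|=r}|g_{\mathrm{far}}|\le\sum_{\mathrm{far}}|\xi_n|\delta_n a_n r^n$ (triangle inequality, exactly as you do for $g_{\mathrm{win}}$), and then applies Cauchy--Schwarz to $S_2=\sum_{\mathrm{far}}\delta_n a_n r^n$ via the factorisation $a_nr^n=\sqrt{a_n^2e^{n(t+1/A)}}\cdot\sqrt{e^{-n/A}}$, obtaining
\[
S_2^{\,2}\le Q_\ell^{(0)}\Bigl(t+\tfrac{1}{A},t\Bigr)\cdot\sum_{n\ge0}e^{-n/A}\ \lesssim\ \frac{H(t)}{B^3}\cdot A\ \lesssim\ \frac{H(t)}{B}.
\]
Your argument is repaired by the same choice: take $\rho=re^{1/(2A)}$ instead of $re^{\eta/\sqrt{B}}$; then $\frac{\rho^2}{\rho^2-r^2}\asymp A$ while $H(\tau)\asymp H(t)$, and one gets $\mathbb{E}\sup|g_{\mathrm{far}}|^2\lesssim A\,H/B^3\lesssim H/B$, which is $o(\widetilde H(t))$ as you wanted.
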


\begin{proof}
Put
\[
m_{1}=\max\left\{ m\in\cI_{\ell}^{\mathbf{p}}:m<A\right\} ,\qquad m_{2}=\min\left\{ m\in\cI_{\ell}^{\mathbf{p}}:m>A\right\} ,
\]
and notice that
\[
\frac{1}{2}p_{\ell}\le\max\left\{ \left|m_{1}-A\right|,\left|m_{2}-A\right|\right\} \le p_{\ell},\qquad p_{\ell}\le\left|m_{1}-m_{2}\right|\le2p_{\ell}.
\]
Define the events
\[
E_{j}=\left\{ \left|\xi_{m_{j}}a_{m_{j}}z^{m_{j}}\right|>\left|f\left(z\right)-\xi_{m_{j}}a_{m_{j}}z^{m_{j}}\right|\right\} ,\qquad j\in\left\{ 1,2\right\} .
\]
We will prove $\pr{E_{1}}>c$, the proof for $E_{2}$ is similar.
The result will then follow by Rouché's theorem. We have the crude
bound:
\[
\bbe\left|f\left(z\right)-\xi_{m_{j}}a_{m_{j}}z^{m_{j}}\right|\le\sum_{n\ne m_{1}}\bbe\left|\xi_{n}\right|\delta_{n}a_{n}r^{n}=\frac{\sqrt{\pi}}{2}\left[S_{1}+S_{2}\right],
\]
where
\[
S_{1}=\sum_{n\ne m_{1},n\in\cI_{\ell}^{\mathbf{p}}}a_{n}r^{n},\qquad S_{2}=\sum_{\left|n-A\right|>s_{\ell}\sqrt{B}}\delta_{n}a_{n}r^{n}.
\]
 By Lemma \ref{lem:coeff_asymp}, we have, uniformly in $n$,
\[
a_{n}^{2}e^{nt}=\frac{H}{\sqrt{2\pi B}}\left[\exp\left(-\frac{\left(n-A\right)^{2}}{2B}\right)+O\left(\frac{1}{B^{\varepsilon}}\right)\right].
\]
Since $\sqrt{x+y}\le\sqrt{x}+\sqrt{y}$, we get by Claim \ref{clm:Gauss_sum_bnds},
\begin{align*}
S_{1} & \le\sum_{n\in\cI_{\ell}^{\mathbf{p}}}a_{n}e^{\frac{1}{2}tn}\lesssim\frac{\sqrt{H}}{B^{1/4}}\sum_{n\in\cI_{\ell}^{\mathbf{p}}}\exp\left(-\frac{\left(n-A\right)^{2}}{4B}\right)+\frac{\sqrt{H}}{B^{1/4}}\cdot\frac{s_{\ell}\sqrt{B}}{p_{\ell}}\cdot O\left(\frac{1}{B^{\varepsilon}}\right)\\
 & \lesssim\frac{\sqrt{H}}{B^{1/4}}\cdot\frac{\sqrt{B}}{p_{\ell}}+O\left(\frac{\sqrt{H}}{B^{1/4+\varepsilon}}\right)\lesssim\frac{\sqrt{H\left(t\right)}}{B^{1/4}\left(t\right)}.
\end{align*}
Now let $\eta=\frac{1}{A}\le\frac{1}{2\sqrt{B}}$, where the inequality
holds (for $t$ sufficiently large) by (\ref{eq:b_bound_in_a}). Writing,
\[
r=e^{\frac{1}{2}t},\quad\tau=t+\eta,\qquad r^{n}=e^{\frac{1}{2}tn}=e^{\frac{1}{2}\left(\tau-\eta\right)n}=e^{\frac{1}{2}\tau n}e^{-\frac{1}{2}\eta n},
\]
and using the Cauchy-Schwarz inequality, Proposition \ref{prop:Q_bnds},
and (\ref{eq:gen_func_est}), we have
\begin{align*}
S_{2}^{2} & \le\sum_{\left|n-A\right|>s_{\ell}\sqrt{B}}a_{n}^{2}e^{\tau n}\cdot\sum_{\left|n-A\right|>s_{\ell}\sqrt{B}}e^{-\eta n}\le Q_{\ell}^{\left(0\right)}\left(\tau,t\right)\cdot\sum_{n=0}^{\infty}e^{-\eta n}\\
 & \le\frac{H\left(\Re{\tau}\right)}{B^{3}}\cdot\frac{2}{\eta}\lesssim H\left(t\right)\exp\left(\eta\cdot A+C\eta^{2}B\right)\cdot\frac{A}{B^{3}}\\
 & \lesssim H\cdot\exp\left(\frac{CB}{A^{2}}\right)\cdot\frac{A}{B^{3}}\lesssim H\cdot\frac{A}{B^{3}},
\end{align*}
where we again used (\ref{eq:b_bound_in_a}) in the last inequality.
Therefore, by Assumption \ref{enu:Assump_A_control_by_B-1} we get
\[
S_{2}\lesssim\frac{\sqrt{H\left(t\right)}}{B^{1/4}\left(t\right)}.
\]
We conclude by Markov's inequality that for $C>0$ sufficiently large,
we have
\[
\pr{\left|f\left(z\right)-\xi_{m_{j}}a_{m_{j}}z^{m_{j}}\right|>\frac{\sqrt{\pi}}{2}\left[S_{1}+S_{2}\right]}\le\pr{\left|f\left(z\right)-\xi_{m_{j}}a_{m_{j}}z^{m_{j}}\right|>\frac{C\sqrt{H\left(t\right)}}{B^{1/4}\left(t\right)}}<\frac{1}{2}.
\]
Finally, again by Lemma \ref{lem:coeff_asymp}
\[
\left(a_{m_{j}}r^{m_{j}}\right)^{2}\gtrsim\frac{H\left(t\right)}{\sqrt{B\left(t\right)}},
\]
and thus with probability at least $c>0$ we have that $\left|\xi_{m_{j}}a_{m_{j}}z^{m_{j}}\right|>\left|f\left(z\right)-\xi_{m_{j}}a_{m_{j}}z^{m_{j}}\right|$. 
\end{proof}

\section{Examples of Admissible Covariance Functions\label{sec:Examples}}

Here are some explicit examples for covarince functions which are
type I and type II admissible.

\subsection{The Mittag-Leffler function}

We consider the Gaussian entire function $f$ whose covariance function
$G$ is given by the Mittag-Leffler function 
\[
G(z)=G_{\alpha}(z)=\sum_{n\geq0}\frac{z^{n}}{\Gamma\left(1+\alpha^{-1}n\right)},
\]
where $\alpha>0$ is a parameter. Notice that $G_{1}(z)=e^{z}$ and
$G_{\tfrac{1}{2}}\left(z\right)=\cosh\sqrt{z}$. The asymptotic behavior
of $G$ and $G'$ is well-known, see for example \cite[Section 3.5.3]{goldberg2008value}.
In particular, as $|z|\to\infty$, and uniformly in $\arg z$,
\[
G(z)=\begin{cases}
\alpha e^{z^{\alpha}}+O\left(1\right), & \left|\arg z\right|\leq\frac{\pi}{2\alpha};\\
O\left(1\right), & \text{otherwise},
\end{cases}
\]
and 
\[
zG'(z)=\begin{cases}
\alpha^{2}z^{\alpha}e^{z^{\alpha}}+O\left(1\right), & \left|\arg z\right|\leq\frac{\pi}{2\alpha};\\
O\left(1\right), & \text{otherwise}.
\end{cases}
\]

\begin{rem}
Notice that for $\alpha\in\left(0,\tfrac{1}{2}\right]$ the complement
of $\left\{ \left|\arg z\right|\leq\frac{\pi}{2\alpha}\right\} $
is empty.
\end{rem}

From this asymptotic description, one easily verifies that $G$ is
type I and type II admissible. Since
\[
a\left(r\right)=\frac{rG^{\prime}\left(r\right)}{G\left(r\right)}\sim\alpha r^{\alpha},\qquad b\left(r\right)=ra^{\prime}\left(r\right)\sim\alpha^{2}r^{\alpha},\qquad r\to\infty,
\]
we have
\[
\mathbb{E}\left[n_{f}(r)\right]=a\left(r^{2}\right)\sim\alpha r^{2\alpha},\quad r\to\infty.
\]
By Theorem \ref{thm:var_asymp},
\[
\var{n_{f}\left(r\right)}\sim\frac{\zeta\left(\frac{3}{2}\right)}{4\sqrt{\pi}}\sqrt{b\left(r^{2}\right)}\sim\frac{\zeta\left(\frac{3}{2}\right)}{4\sqrt{\pi}}\cdot\alpha r^{\alpha},\quad r\to\infty.
\]

\subsection{\label{subsec:double_exp_example} The double exponent}

Here we consider the Gaussian entire function $f$ with covariance
function
\[
G(z)=e^{e^{z}}.
\]
The function $G$ is type I and type II admissible, and has an infinite
order of growth, with
\[
a\left(r\right)=re^{r},\qquad b\left(r\right)=r\left(r+1\right)e^{r}.
\]
Thus,
\[
\mathbb{E}\left[n_{f}(r)\right]=r^{2}e^{r^{2}},
\]
and by Theorem \ref{thm:var_asymp}
\[
\var{n_{f}\left(r\right)}\sim\frac{\zeta\left(\frac{3}{2}\right)}{4\sqrt{\pi}}r^{2}e^{\frac{1}{2}r^{2}},\quad r\to\infty.
\]

\subsection{The Lindelöf functions }

For $\alpha>0$, we consider the Gaussian entire function $f$ with
covariance function
\[
G(z)=G_{\alpha}\left(z\right)=\sum_{n\geq0}\frac{z^{n}}{\log^{\alpha n}\left(n+e\right)}.
\]
The function $G$ has infinite order of growth, and it follows from
\cite[Example 1.4.1]{KIROSODIN}, that it is type I and type II admissible
with, 
\[
\log G\left(r\right)\sim\frac{\alpha}{e}r^{-\frac{1}{\alpha}}\exp\left(r^{\frac{1}{\alpha}}\right),\quad r\to\infty,
\]
and
\[
a\left(r\right)\sim\frac{1}{e}\exp\left(r^{1/\alpha}\right),\qquad b\left(r\right)\sim\frac{r^{1/\alpha}}{\alpha e}\exp\left(r^{1/\alpha}\right),\qquad r\to\infty.
\]
In this case, 
\[
\mathbb{E}\left[n_{f}(r)\right]\sim e^{-1}e^{r^{2/\alpha}},\quad r\to\infty,
\]
\[
\var{n_{f}\left(r\right)}\sim\frac{\zeta\left(\frac{3}{2}\right)}{4\sqrt{\pi e\alpha}}\cdot r^{1/\alpha}e^{\frac{1}{2}r^{2/\alpha}},\quad r\to\infty.
\]

\subsection{An example with radius of convergence $1$\label{subsec:example_disk}}

For $\alpha>0$, we consider the Gaussian analytic function $f$,
where now the covariance function is given by 
\[
G(z)=\exp\left(\frac{1}{\left(1-z\right)^{\alpha}}\right).
\]
One can check that this function is type I admissible with $R_{G}=1$
and $C_{G}>2$ sufficiently large depending on $\alpha$. Since
\[
a\left(r\right)=\frac{\alpha r}{\left(1-r\right)^{\alpha+1}},\qquad b\left(r\right)=\frac{\alpha r}{\left(1-r\right)^{\alpha+1}}+\frac{\alpha\left(\alpha+1\right)r^{2}}{\left(1-r\right)^{\alpha+2}}.
\]
We have
\[
\mathbb{E}\left[n_{f}\left(r\right)\right]=\frac{\alpha r^{2}}{\left(1-r^{2}\right)^{\alpha+1}},\quad r<1,
\]
and Theorem \ref{thm:var_asymp} yields
\[
\var{n_{f}\left(r\right)}\sim\frac{\zeta\left(\frac{3}{2}\right)}{4\sqrt{\pi}}\frac{\sqrt{\alpha\left(\alpha+1\right)}}{\left(1-r^{2}\right)^{\tfrac{1}{2}\alpha+1}},\quad r\to1^{-}.
\]

\begin{rem}
For functions $G$ of \emph{slower} growth, the above asymptotics
\emph{no longer} holds. Buckley \cite{buckley2014fluctuations} found
the asymptotic of the variance for the following special choice
\[
G\left(z\right)=\frac{1}{\left(1-z\right)^{L}},\quad\text{with }L>0,
\]
which corresponds to Gaussian analytic functions whose zero sets are
\emph{invariant} with respect to the isometries of the hyperbolic
disk (see \cite[Chapter 2.3]{ZerosBook}). Earlier Peres and Virág
\cite{peres2005zeros} computed the variance in the case $L=1$, where
the zero set forms a \emph{determinantal} point process.
\end{rem}

\appendix

\section{\label{sec:Kahane_formulas} Formulas for the expectation and variance}

For the convenience of the reader, here we give proofs for the formulas
of the expected value and variance of the number of zeros in a disk
from \cite[p. 195]{Kahane}. Let $\Pi\subset\bbc$ be a compact subset
of the plane, and denote by $n_{f}\left(\Pi\right)$ the number of
zeros of the Gaussian analytic function $f$ in $\Pi$ (we assume
that $\Pi$ is contained inside the domain of convergence of $f$).
We denote by $K_{f}$ the covariance kernel of $f$, and by $J_{f}$
the normalized covariance kernel, given by
\[
J_{f}\left(z,w\right)=\frac{K_{f}\left(z,w\right)}{\sqrt{K_{f}\left(z,z\right)}\sqrt{K_{f}\left(w,w\right)}}.
\]

We first recall the Edelman-Kostlan formula \cite[p. 25]{ZerosBook},
which states
\begin{equation}
\ex{n_{f}\left(\Pi\right)}=\frac{1}{4\pi}\int_{\Pi}\Delta_{z}\log K_{f}\left(z,z\right)\,\dd m\left(z\right),\label{eq:Edelman-Kostlan}
\end{equation}
where $m$ is the Lebesgue measure on $\bbc$ and $\Delta_{z}=4\frac{\partial}{\partial z}\frac{\partial}{\partial\overline{z}}$
is the usual Laplace operator. In addition, we have (see \cite[Thm. 3.1]{shiffman2008number}
or \cite[Lemma 2.3]{nazarov2011fluctuations})
\begin{equation}
\var{n_{f}\left(\Pi\right)}=\tfrac{1}{16\pi^{2}}\iint_{\Pi\times\Pi}\Delta_{z}\Delta_{w}\mathrm{Li}_{2}\left(\left|J_{f}\left(z,w\right)\right|^{2}\right)\,\dd m\left(z\right)\,\dd m\left(w\right),\label{eq:variance_formula}
\end{equation}
where
\[
\mathrm{Li}_{2}\left(x\right)=\sum_{n=1}^{\infty}\frac{x^{n}}{n^{2}}
\]
is the dilogarithm function.

We now derive more explicit formulas when $\Pi=r\bbd\eqdef\left\{ \left|w\right|\le r\right\} $.
Recall that $n\left(r\right)\eqdef n_{f}\left(r\bbd\right)$, $K_{f}\left(z,w\right)=G\left(z\overline{w}\right)$,
\[
H\left(t\right)=G\left(e^{t}\right),\quad A\left(t\right)=H^{\prime}\left(t\right),\quad B\left(t\right)=A^{\prime}\left(t\right),
\]
and that we put $e^{t}=r^{2}$.
\begin{claim}
We have
\[
\mathbb{E}\left[n(r)\right]=a\left(r^{2}\right)=A\left(t\right).
\]
\end{claim}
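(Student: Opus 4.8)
The plan is to derive the identity from the Edelman--Kostlan formula \eqref{eq:Edelman-Kostlan} applied with $\Pi=r\bbd$. Since $K_{f}(z,w)=G(z\overline{w})$, on the diagonal we have $K_{f}(z,z)=G(|z|^{2})$, so everything reduces to computing the radial function $z\mapsto\Delta_{z}\log G(|z|^{2})$ and then integrating it over $\{|z|\le r\}$.

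For the Laplacian I would use Wirtinger derivatives, $\Delta_{z}=4\,\partial_{z}\partial_{\overline{z}}$, and write $s=z\overline{z}$. The chain rule gives $\partial_{\overline{z}}\log G(z\overline{z})=z\,(\log G)'(s)$, and hence
\[
\partial_{z}\partial_{\overline{z}}\log G(z\overline{z})=(\log G)'(s)+z\overline{z}\,(\log G)''(s)=(\log G)'(s)+s\,(\log G)''(s)=\frac{\dd}{\dd s}\!\bigl[s\,(\log G)'(s)\bigr]=a'(s),
\]
so that $\Delta_{z}\log K_{f}(z,z)=4\,a'(|z|^{2})$ on $\{z\neq0\}$.

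Substituting into \eqref{eq:Edelman-Kostlan} and passing to polar coordinates $z=\rho e^{i\phi}$, together with the substitution $s=\rho^{2}$,
\[
\ex{n(r)}=\frac{1}{4\pi}\int_{r\bbd}4\,a'(|z|^{2})\,\dd m(z)=2\int_{0}^{r}a'(\rho^{2})\,\rho\,\dd\rho=\int_{0}^{r^{2}}a'(s)\,\dd s=a(r^{2})-a(0).
\]
Since $a(s)=sG'(s)/G(s)$, we have $a(0)=0$ when $G(0)>0$; in general $a$ extends continuously to the origin with $a(0)=\mathrm{ord}_{0}G$, which is exactly the extra contribution of the distributional part of $\Delta_{z}\log K_{f}(z,z)$ at $z=0$ (equivalently, the multiplicity of the zero of $f$ forced at the origin), so that $\ex{n(r)}=a(r^{2})$ in all cases. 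Finally, with $e^{t}=r^{2}$ one has $A(t)=(\log H(t))'=e^{t}G'(e^{t})/G(e^{t})=a(e^{t})=a(r^{2})$, giving the claim. I do not expect a genuine obstacle here: the only steps requiring care are the Wirtinger computation of $\Delta_{z}\log G(|z|^{2})$ — where one must keep in mind that $s=z\overline{z}$ depends on both $z$ and $\overline{z}$ — and the bookkeeping of the contribution at the origin.
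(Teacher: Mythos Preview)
Your proof is correct and follows essentially the same route as the paper: apply the Edelman--Kostlan formula on $r\bbd$, compute the Laplacian of the radial function $\log G(|z|^{2})$, and integrate. The only cosmetic difference is that you use Wirtinger derivatives to obtain $\Delta_{z}\log G(|z|^{2})=4a'(|z|^{2})$, whereas the paper writes the Laplacian in polar coordinates as $\frac{1}{\rho}\partial_{\rho}\bigl[\rho\,\partial_{\rho}\log G(\rho^{2})\bigr]$ and integrates directly; your extra remark on the contribution at the origin when $G(0)=0$ is a point the paper's proof glosses over.
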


\begin{proof}
Writing the Laplace operator in polar coordinates and differentiating
the covariance kernel, we get
\[
\Delta\log K_{f}(z,z)=\frac{1}{r}\frac{\partial}{\partial r}\left[r\frac{\partial}{\partial r}\right]\log G\left(r^{2}\right).
\]
Now, by (\ref{eq:Edelman-Kostlan}) we have, 
\[
\mathbb{E}\left[n\left(r\right)\right]=\frac{1}{2}\int_{0}^{r}\frac{\partial}{\partial s}\left[s\frac{\partial}{\partial s}\right]\log G\left(s^{2}\right)\,\dd s=\frac{1}{2}\left.\left[s\frac{\partial}{\partial s}\right]\log G\left(s^{2}\right)\right|_{s=r}=r^{2}\frac{G^{\prime}\left(r^{2}\right)}{G\left(r^{2}\right)}=a\left(r^{2}\right)=A\left(t\right).
\]
\end{proof}
\begin{claim}[{cf. \cite[Lemma 5]{buckley2014fluctuations}}]
\label{clm:var_formula_appendix} We have
\begin{align*}
\text{Var}\left(n_{f}\left(r\right)\right) & =\frac{1}{2\pi}\int_{-\pi}^{\pi}\frac{\left|G\left(r^{2}\right)G'\left(r^{2}e^{i\theta}\right)r^{2}e^{i\theta}-G\left(r^{2}e^{i\theta}\right)G'\left(r^{2}\right)r^{2}\right|^{2}}{G^{2}(r^{2})\left(G^{2}(r^{2})-\left|G^{2}(r^{2}e^{i\theta})\right|\right)}\,\dd\theta\\
 & =\frac{1}{2\pi}\int_{-\pi}^{\pi}\frac{\left|H\left(t\right)H^{\prime}\left(t+i\theta\right)-H\left(t+i\theta\right)H^{\prime}\left(t\right)\right|^{2}}{H^{2}\left(t\right)\left(H^{2}\left(t\right)-\left|H^{2}\left(t+i\theta\right)\right|\right)}\,\dd\theta\\
 & =\frac{1}{2\pi}\int_{-\pi}^{\pi}\frac{\left|A\left(t+i\theta\right)-A(t)\right|^{2}}{\exp\left(2\cdot\text{Im}\left[\int_{0}^{\theta}A(t+i\varphi)d\varphi\right]\right)-1}\,\dd\theta.
\end{align*}
\end{claim}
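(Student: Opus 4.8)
The plan is first to observe that the three displayed right‑hand sides are pairwise equal by elementary manipulations, so it suffices to establish any one of them, and then to derive it from the general dilogarithm formula $(\ref{eq:variance_formula})$. For the equivalences: in the numerator of the second integrand factor out $H(t)H(t+i\theta)$, using $(\log H)'=A$, to get $H(t)H'(t+i\theta)-H(t+i\theta)H'(t)=H(t)H(t+i\theta)\bigl(A(t+i\theta)-A(t)\bigr)$, so that numerator equals $H^{2}(t)\,|H(t+i\theta)|^{2}\,|A(t+i\theta)-A(t)|^{2}$. Integrating $(\log H)'$ along the vertical segment from $t$ to $t+i\theta$ gives $\log\frac{H(t+i\theta)}{H(t)}=i\int_{0}^{\theta}A(t+i\varphi)\,\dd\varphi$; taking real parts, $\bigl|H(t+i\theta)/H(t)\bigr|^{2}=\exp\bigl(-2\int_{0}^{\theta}\Im{A(t+i\varphi)}\,\dd\varphi\bigr)$, so the second denominator equals $H^{2}(t)|H(t+i\theta)|^{2}\bigl(\exp(2\int_{0}^{\theta}\Im{A(t+i\varphi)}\,\dd\varphi)-1\bigr)$; cancelling $H^{2}(t)|H(t+i\theta)|^{2}$ turns the second form into the third, and the substitution $H(z)=G(e^{z})$, $H'(z)=e^{z}G'(e^{z})$ (so $H(t)=G(r^{2})$, $H(t+i\theta)=G(r^{2}e^{i\theta})$, $H'(t)=r^{2}G'(r^{2})$, $H'(t+i\theta)=r^{2}e^{i\theta}G'(r^{2}e^{i\theta})$) turns the first form into the second.

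\emph{Deriving the formula from $(\ref{eq:variance_formula})$.} Apply $(\ref{eq:variance_formula})$ with $\Pi=r\bbd$. Since $G$ has real coefficients, $\overline{G(\zeta)}=G(\bar\zeta)$, so with $z=\rho e^{i\alpha}$, $w=\sigma e^{i\beta}$,
\[
|J_{f}(z,w)|^{2}=\frac{G(\rho\sigma e^{i(\alpha-\beta)})\,G(\rho\sigma e^{-i(\alpha-\beta)})}{G(\rho^{2})\,G(\sigma^{2})}\defeq u(\rho,\sigma,\alpha-\beta),
\]
which depends on the angles only through $\theta\eqdef\alpha-\beta$. Applying Green's first identity in $z$ and then in $w$ converts the area integral of $\Delta_{z}\Delta_{w}\mathrm{Li}_{2}(u)$ over $r\bbd\times r\bbd$ into $r^{2}\int_{-\pi}^{\pi}\int_{-\pi}^{\pi}\partial_{\rho}\partial_{\sigma}\mathrm{Li}_{2}(u)\big|_{\rho=\sigma=r}\,\dd\alpha\,\dd\beta$, and one angular integration then yields a factor $2\pi$. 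Using $\mathrm{Li}_{2}'(x)=-x^{-1}\log(1-x)$ one has $\partial_{\rho}\partial_{\sigma}\mathrm{Li}_{2}(u)=\tfrac{u}{1-u}\,\partial_{\rho}\log u\,\partial_{\sigma}\log u-\log(1-u)\,\partial_{\rho}\partial_{\sigma}\log u$, and a direct computation gives, at $\rho=\sigma=r$ (with $e^{t}=r^{2}$), $\partial_{\rho}\log u=\tfrac{2}{r}\Re{A(t+i\theta)-A(t)}$, $\partial_{\rho}\partial_{\sigma}\log u=\tfrac{2}{r^{2}}\Re{B(t+i\theta)}$, and $\tfrac{u}{1-u}=\bigl(\exp(2\int_{0}^{\theta}\Im{A(t+i\varphi)}\,\dd\varphi)-1\bigr)^{-1}$. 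This produces the third form except that $\bigl(\Re{A(t+i\theta)-A(t)}\bigr)^{2}$ appears instead of $|A(t+i\theta)-A(t)|^{2}$; the missing term $\tfrac{2}{r^{2}}\tfrac{u}{1-u}\bigl(\Im{A(t+i\theta)}\bigr)^{2}$ is recovered from the $\log(1-u)\,\Re{B(t+i\theta)}$ contribution by one integration by parts in $\theta$, using $\tfrac{\dd}{\dd\theta}\Im{A(t+i\theta)}=\Re{B(t+i\theta)}$ and $\tfrac{\dd}{\dd\theta}\log(1-u)=2\,\Im{A(t+i\theta)}\,\tfrac{u}{1-u}$, the boundary terms vanishing by $2\pi$‑periodicity.

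\emph{The main obstacle.} The delicate point is the diagonal $z=w$, where $|J_{f}|=1$ and $\mathrm{Li}_{2}(|J_{f}|^{2})$ has a $(1-|J_{f}|^{2})\log(1-|J_{f}|^{2})$ singularity (with $1-|J_{f}|^{2}\asymp|z-w|^{2}$), so $\Delta_{z}\mathrm{Li}_{2}(|J_{f}|^{2})$ carries a $\log|z-w|$ singularity and Green's identity in $w$ is not literally applicable; one must excise an $\veps$‑ball about the diagonal and pass to the limit. The resulting boundary artefact is a correction $-\ex{n_{f}(r)}$, which is exactly cancelled by the fact that $(\ref{eq:variance_formula})$ is an identity of \emph{distributions}: the diagonal of $\Delta_{z}\Delta_{w}\mathrm{Li}_{2}(|J_{f}|^{2})$ carries an atom whose integral over $r\bbd\times r\bbd$ equals $16\pi^{2}\ex{n_{f}(r)}$ (mirroring the atomic part of $\Delta\log|f|=2\pi\sum_{\mathrm{zeros}}\delta$ underlying $(\ref{eq:variance_formula})$), and the two contributions annihilate; one also checks that the surviving $\theta$‑integral converges, its integrand being $\asymp\log|\theta|$ near $\theta=0$. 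Alternatively one can bypass Green's identity entirely by writing $n_{f}(r)=\tfrac{1}{\pi}\tfrac{\dd}{\dd t}\int_{-\pi}^{\pi}\log|f(e^{t/2+i\theta})|\,\dd\theta$ (Jensen's formula, $e^{t}=r^{2}$), so that $\var{n_{f}(r)}=\tfrac{1}{\pi^{2}}\iint_{[-\pi,\pi]^{2}}\partial_{t}\partial_{t'}\mathrm{Cov}\bigl(\log|f(z)|,\log|f(w)|\bigr)\big|_{t'=t}\,\dd\theta\,\dd\theta'$ with $z=e^{t/2+i\theta}$, $w=e^{t'/2+i\theta'}$; inserting $\mathrm{Cov}(\log|f(z)|,\log|f(w)|)=\tfrac14\mathrm{Li}_{2}(|J_{f}(z,w)|^{2})$ and running the same derivative computation and angular integration by parts yields the formula directly, the coincidence singularity now being merely logarithmic.
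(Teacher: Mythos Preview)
Your argument is correct, but it takes a genuinely different route from the paper.  The paper applies the complex form of Stokes' theorem twice, landing directly on the mixed Wirtinger derivative $\partial_{\bar z}\partial_{\bar w}\mathrm{Li}_2(|J_f|^2)$ restricted to $\partial(r\bbd)\times\partial(r\bbd)$; a single chain-rule computation then produces the rational expression
\[
\frac{(wG(z\bar z)G'(\bar z w)-zG(\bar z w)G'(z\bar z))(zG(w\bar w)G'(z\bar w)-wG(z\bar w)G'(w\bar w))}{G(z\bar z)G(w\bar w)\bigl[G(z\bar z)G(w\bar w)-G(z\bar w)G(\bar z w)\bigr]},
\]
which on $|z|=|w|=r$ already has the product structure $|\,\cdot\,|^{2}$ in the numerator and gives the first displayed form after the change $\theta=\theta_1-\theta_2$.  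No angular integration by parts is needed.  Your real-variable route via $\partial_\rho\partial_\sigma$ only sees $(\Re[A(t+i\theta)-A(t)])^{2}$ together with the residual $\log(1-u)\,\Re[B(t+i\theta)]$ term, and you then recover $(\Im[A(t+i\theta)])^{2}$ by the integration by parts you describe; this extra step is the price of using radial rather than antiholomorphic derivatives.  What your approach buys is transparency about which identities are being used at each stage and, in particular, an explicit accounting of the diagonal singularity that the paper passes over silently.  Your Jensen-formula alternative is arguably the cleanest of all three, since it replaces the double area integral by a double circle integral from the outset and keeps the coincidence singularity at the integrable $\log|\theta|$ level throughout.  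One small remark: your description of the diagonal (``boundary artefact $-\mathbb{E}[n_f(r)]$ cancelled by an atom $+\mathbb{E}[n_f(r)]$'') is one valid bookkeeping, but in fact Green's identity applied to $\Delta_w\mathrm{Li}_2$ (which has only an $L^1$ logarithmic singularity) already captures the atom automatically, so no separate excision-and-cancellation argument is strictly required.
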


\begin{proof}
Applying Stokes\textquoteright{} Theorem to (\ref{eq:variance_formula})
we get
\begin{equation}
\text{Var}\left(n(r)\right)=-\tfrac{1}{4\pi^{2}}\ointop_{\partial\left(r\bbd\right)}\ointop_{\partial\left(r\bbd\right)}\frac{\partial}{\partial\overline{z}}\frac{\partial}{\partial\overline{w}}\mathrm{Li}_{2}\left(\left|J_{f}\left(z,w\right)\right|^{2}\right)\,\dd\overline{z}\,\dd\overline{w}.\label{eq:countour_variance_formula}
\end{equation}
Recall that
\[
\frac{\dd}{\dd\zeta}\mathrm{Li}_{2}\left(\zeta\right)=\frac{1}{\zeta}\log\frac{1}{1-\zeta},
\]
and therefore
\begin{align*}
\frac{\partial}{\partial\overline{w}}\mathrm{Li}_{2}\left(\left|J_{f}\left(z,w\right)\right|^{2}\right) & =\frac{\partial}{\partial\overline{w}}\mathrm{Li}_{2}\left(\frac{K_{f}\left(z,w\right)K_{f}\left(w,z\right)}{K_{f}\left(z,z\right)K_{f}\left(w,w\right)}\right)=\frac{\partial}{\partial\overline{w}}\mathrm{Li}_{2}\left(\frac{G\left(z\overline{w}\right)G\left(\overline{z}w\right)}{G\left(z\overline{z}\right)G\left(w\overline{w}\right)}\right)\\
 & =\log\left(1-\frac{G\left(z\overline{w}\right)G\left(\overline{z}w\right)}{G\left(z\overline{z}\right)G\left(w\overline{w}\right)}\right)\frac{\left(wG\left(z\overline{w}\right)G^{\prime}\left(w\overline{w}\right)-zG\left(w\overline{w}\right)G^{\prime}\left(z\overline{w}\right)\right)}{G\left(w\overline{w}\right)G\left(z\overline{w}\right)},
\end{align*}
hence, after some simplifications
\begin{multline*}
\frac{\partial}{\partial\overline{z}}\frac{\partial}{\partial\overline{w}}\mathrm{Li}_{2}\left(\left|J_{f}\left(z,w\right)\right|^{2}\right)=\\
\frac{\left(wG\left(z\overline{z}\right)G^{\prime}\left(\overline{z}w\right)-zG\left(\overline{z}w\right)G^{\prime}\left(z\overline{z}\right)\right)\left(zG\left(w\overline{w}\right)G^{\prime}\left(z\overline{w}\right)-wG\left(z\overline{w}\right)G^{\prime}\left(w\overline{w}\right)\right)}{G\left(z\overline{z}\right)G\left(w\overline{w}\right)\left[G\left(z\overline{z}\right)G\left(w\overline{w}\right)-G\left(z\overline{w}\right)G\left(\overline{z}w\right)\right]}.
\end{multline*}
Using the parametrization $z=re^{i\theta_{1}}$, $w=re^{i\theta_{2}}$
in (\ref{eq:countour_variance_formula}) (notice the contour $\partial\left(r\bbd\right)$
is oriented \emph{clockwise}) and after some additional simplifications,
we get, 
\[
\text{Var}\left(n_{f}\left(r\right)\right)=\tfrac{1}{4\pi^{2}}\int_{0}^{2\pi}\int_{0}^{2\pi}\frac{\left|G\left(r^{2}\right)G^{\prime}\left(r^{2}e^{i\left(\theta_{1}-\theta_{2}\right)}\right)r^{2}e^{i\left(\theta_{1}-\theta_{2}\right)}-G\left(r^{2}e^{i\left(\theta_{1}-\theta_{2}\right)}\right)G^{\prime}\left(r^{2}\right)r^{2}\right|^{2}}{G\left(r^{2}\right)^{2}\left[G\left(r^{2}\right)^{2}-\left|G\left(r^{2}e^{i\left(\theta_{1}-\theta_{2}\right)}\right)\right|^{2}\right]}\,\dd\theta_{1}\dd\theta_{2}.
\]
Making a change of variables $\theta=\theta_{1}-\theta_{2}$ and integrating
out the other variable, we get 
\[
\text{Var}\left(n_{f}\left(r\right)\right)=\frac{1}{2\pi}\int_{-\pi}^{\pi}\frac{\left|G\left(r^{2}\right)G'\left(r^{2}e^{i\theta}\right)r^{2}e^{i\theta}-G\left(r^{2}e^{i\theta}\right)G'\left(r^{2}\right)r^{2}\right|^{2}}{G^{2}(r^{2})\left(G^{2}(r^{2})-\left|G^{2}(r^{2}e^{i\theta})\right|\right)}\,\dd\theta.
\]
Now using $r^{2}=e^{t},$ we find that
\begin{align*}
\frac{\left|G\left(r^{2}\right)G'\left(r^{2}e^{i\theta}\right)r^{2}e^{i\theta}-G\left(r^{2}e^{i\theta}\right)G'\left(r^{2}\right)r^{2}\right|^{2}}{G^{2}(r^{2})\left(G^{2}(r^{2})-\left|G^{2}(r^{2}e^{i\theta})\right|\right)} & =\frac{\left|H\left(t\right)H^{\prime}\left(t+i\theta\right)-H\left(t+i\theta\right)H^{\prime}\left(t\right)\right|^{2}}{H^{2}\left(t\right)\left(H^{2}\left(t\right)-\left|H^{2}\left(t+i\theta\right)\right|\right)}\\
 & =\frac{\left|A\left(t+i\theta\right)-A(t)\right|^{2}}{\exp\left(-2\cdot\text{Re}\left[i\int_{0}^{\theta}A(t+i\varphi)d\varphi\right]\right)-1},
\end{align*}
and since $\text{Im}\left(z\right)=-\text{Re}\left(iz\right)$ we
get the required result.
\end{proof}
\bibliographystyle{amsplain}
\bibliography{bibRig}

\providecommand{\bysame}{\leavevmode\hbox to3em{\hrulefill}\thinspace}
\providecommand{\MR}{\relax\ifhmode\unskip\space\fi MR }
\providecommand{\MRhref}[2]{%
  \href{http://www.ams.org/mathscinet-getitem?mr=#1}{#2}
}
\providecommand{\href}[2]{#2}
\begin{thebibliography}{10}

\bibitem{ahlforsComplex}
Lars~V. Ahlfors, \emph{Complex analysis}, third ed., McGraw-Hill Book Co., New
  York, 1978, An introduction to the theory of analytic functions of one
  complex variable, International Series in Pure and Applied Mathematics.

\bibitem{azais2016clt}
Jean-Marc Aza\"{\i}s, Federico Dalmao, and Jos\'{e}~R. Le\'{o}n, \emph{C{LT}
  for the zeros of classical random trigonometric polynomials}, Ann. Inst.
  Henri Poincar\'{e} Probab. Stat. \textbf{52} (2016), no.~2, 804--820.

\bibitem{Non_universality}
Vlad Bally, Lucia Caramellino, and Guillaume Poly, \emph{Non universality for
  the variance of the number of real roots of random trigonometric
  polynomials}, Probab. Theory Related Fields \textbf{174} (2019), no.~3-4,
  887--927.

\bibitem{buckley2014fluctuations}
Jeremiah Buckley, \emph{Fluctuations in the zero set of the hyperbolic
  {G}aussian analytic function}, International Mathematics Research Notices
  \textbf{2015}, no.~6, 1666--1687.

\bibitem{buckley2017fluctuations}
Jeremiah Buckley and Mikhail Sodin, \emph{Fluctuations of the increment of the
  argument for the gaussian entire function}, Journal of Statistical Physics
  \textbf{168} (2017), no.~2, 300--330.

\bibitem{edelman1995many}
Alan Edelman and Eric Kostlan, \emph{How many zeros of a random polynomial are
  real?}, Bulletin of the American Mathematical Society \textbf{32} (1995),
  no.~1, 1--37.

\bibitem{feldheim2018variance}
Naomi~D. Feldheim, \emph{Variance of the number of zeroes of shift-invariant
  {G}aussian analytic functions}, Israel Journal of Mathematics \textbf{227}
  (2018), no.~2, 753--792.

\bibitem{forrester1999exact}
Peter~J. Forrester and Graeme Honner, \emph{Exact statistical properties of the
  zeros of complex random polynomials}, Journal of Physics A: Mathematical and
  General \textbf{32} (1999), no.~16, 2961.

\bibitem{GhoshPeres}
Subhroshekhar Ghosh and Yuval Peres, \emph{Rigidity and tolerance in point
  processes: {G}aussian zeros and {G}inibre eigenvalues}, Duke Math. J.
  \textbf{166} (2017), no.~10, 1789--1858.

\bibitem{goldberg2008value}
Anatolii~A. Goldberg and Iosif~V. Ostrovskii, \emph{Value distribution of
  meromorphic functions}, vol. 236, American Mathematical Soc., 2008.

\bibitem{granville2011distribution}
Andrew Granville and Igor Wigman, \emph{The distribution of the zeros of random
  trigonometric polynomials}, American journal of mathematics \textbf{133}
  (2011), no.~2, 295--357.

\bibitem{Hayman}
Walter~K. Hayman, \emph{A generalisation of {S}tirling's {F}ormula}, J. reine
  angew. Math \textbf{196} (1956), no.~1, 2.

\bibitem{hayman1974local}
\bysame, \emph{The local growth of power series: a survey of the
  {W}iman-{V}aliron method}, Canadian Mathematical Bulletin \textbf{17} (1974),
  no.~3, 317--358.

\bibitem{ZerosBook}
John~Ben Hough, Manjunath Krishnapur, Yuval Peres, and B{\'a}lint Vir{\'a}g,
  \emph{Zeros of gaussian analytic functions and determinantal point
  processes}, vol.~51, American Mathematical Soc., 2009.

\bibitem{Kabluchko2014distribution}
Zakhar Kabluchko and Dmitry Zaporozhets, \emph{Asymptotic distribution of
  complex zeros of random analytic functions}, Ann. Probab. \textbf{42} (2014),
  no.~4, 1374--1395.

\bibitem{Kahane}
Jean-Pierre Kahane, \emph{Some random series of functions}, vol.~5, Cambridge
  University Press, 1993.

\bibitem{kiro2019rigidity}
Avner Kiro and Alon Nishry, \emph{Rigidity for zero sets of {G}aussian entire
  functions}, Electron. Commun. Probab. \textbf{24} (2019), Paper No. 30, 9.

\bibitem{KIROSODIN}
Avner Kiro and Mikhail Sodin, \emph{On functions {$K$} and {$E$} generated by a
  sequence of moments}, Expo. Math. \textbf{35} (2017), no.~4, 443--477.

\bibitem{Littlewood1948distribution2}
J.~E. Littlewood and A.~C. Offord, \emph{On the distribution of zeros and
  {$a$}-values of a random integral function. {II}}, Ann. of Math. (2)
  \textbf{49} (1948), 885--952; errata 50, 990--991 (1949).

\bibitem{Littlewood1945distribution1}
John~E. Littlewood and A.~Cyril Offord, \emph{On the distribution of the zeros
  and {$a$}-values of a random integral function. {I}}, J. London Math. Soc.
  \textbf{20} (1945), 130--136.

\bibitem{maslova1974distribution}
Nina~B. Maslova, \emph{The distribution of the number of real roots of random
  polynomials}, Teor. Verojatnost. i Primenen. \textbf{19} (1974), 488--500.

\bibitem{nns2016distribution}
Fedor Nazarov, Alon Nishry, and Mikhail Sodin, \emph{Distribution of zeroes of
  {R}ademacher {T}aylor series}, Ann. Fac. Sci. Toulouse Math. (6) \textbf{25}
  (2016).

\bibitem{nazarov2011fluctuations}
Fedor Nazarov and Mikhail Sodin, \emph{Fluctuations in random complex zeroes:
  asymptotic normality revisited}, International Mathematics Research Notices
  \textbf{2011}, no.~24, 5720--5759.

\bibitem{nazarov2010random}
\bysame, \emph{Random complex zeroes and random nodal lines}, Proceedings of
  the International Congress of Mathematicians 2010 (ICM 2010) (In 4 Volumes)
  Vol. I: Plenary Lectures and Ceremonies Vols. II--IV: Invited Lectures, World
  Scientific, 2010, pp.~1450--1484.

\bibitem{nazarov2012correlations}
\bysame, \emph{Correlation functions for random complex zeroes: strong
  clustering and local universality}, Comm. Math. Phys. \textbf{310} (2012),
  no.~1, 75--98.

\bibitem{nguyen2019random}
Oanh Nguyen and Van Vu, \emph{Random polynomials: central limit theorems for
  the real roots}, \emph{arXiv } preprint: 1904.04347 (2019).

\bibitem{offord1967distribution}
AC~Offord, \emph{The distribution of zeros of power series whose coefficients
  are independent random variables}, Indian J. Math \textbf{9} (1967),
  175--196.

\bibitem{peres2005zeros}
Yuval Peres and B{\'a}lint Vir{\'a}g, \emph{Zeros of the i.i.d. gaussian power
  series: a conformally invariant determinantal process}, Acta Mathematica
  \textbf{194} (2005), no.~1, 1--35.

\bibitem{rosenbloom1962probability}
Paul~C. Rosenbloom, \emph{Probability and entire functions}, Studies in
  mathematical analysis and related topics \textbf{45} (1962), 325--332.

\bibitem{Shiffman2020expansion}
Bernard Shiffman, \emph{Asymptotic expansion of the variance of random zeros on
  complex manifolds}, arXiv:2001.05550 (2020).

\bibitem{shiffman2008number}
Bernard Shiffman and Steve Zelditch, \emph{Number variance of random zeros on
  complex manifolds}, Geometric and Functional Analysis \textbf{18} (2008),
  no.~4, 1422--1475.

\bibitem{shiffman2010smooth}
\bysame, \emph{Number variance of random zeros on complex manifolds, {II}:
  smooth statistics}, Pure Appl. Math. Q. \textbf{6} (2010), no.~4, Special
  Issue: In honor of Joseph J. Kohn. Part 2, 1145--1167.

\bibitem{sodin2000zeros}
Mikhail Sodin, \emph{Zeros of {G}aussian analytic functions}, Math. Res. Lett.
  \textbf{7} (2000), no.~4, 371--381.

\bibitem{sodin2004random}
Mikhail Sodin and Boris Tsirelson, \emph{Random complex zeroes, {I}.
  {A}symptotic normality}, Israel Journal of Mathematics \textbf{144} (2004),
  no.~1, 125--149.

\end{thebibliography}

\end{document}